\newcommand{\cl}{\operatorname{cl}}
\newtheorem{theorem}{Theorem}[section]
\newtheorem{lemma}[theorem]{Lemma}
\newtheorem{corollary}[theorem]{Corollary}
\newtheorem{problem}[theorem]{Problem}
\newtheorem{conjecture}[theorem]{Conjecture}
\numberwithin{equation}{section}
\begin{document}

\title{On 
the Conjecture of Wood and projective homogeneity}



\author{J P. Boro\'nski}
\author{M. Smith}
\address[J. P. Boro\'nski]{Faculty of Applied Mathematics,
	AGH University of Science and Technology,
	al. Mickiewicza 30,
	30-059 Krak\'ow,
	Poland -- and -- National Supercomputing Centre IT4Innovations, Division of the University of Ostrava,
	Institute for Research and Applications of Fuzzy Modeling,
	30. dubna 22, 70103 Ostrava,
	Czech Republic}
\email{jan.boronski@osu.cz}
\address[M. Smith]{Department of Mathematics and Statistics, Auburn University, Auburn, AL 36849, United States}
\email{smith01@auburn.edu}

\date{}

\begin{abstract}
In 2005 Kawamura and Rambla, independently, constructed a metric counterexample to Wood's Conjecture from 1982. We exhibit a new nonmetric counterexample of a space $\hat L$, such that $C_0(\hat L,\mathbb{C})$ is almost transitive, and show that it is distinct from a nonmetric space whose existence follows from the work of Greim and Rajagopalan in 1997. Up to our knowledge, this is only the third known counterexample to Wood's Conjecture. We also show that, contrary to what was expected, if a one-point compactification of a space $X$ is R.H. Bing's pseudo-circle then $C_0(X,\mathbb{C})$ is not almost transitive, for a generic choice of points. Finally, we point out close relation of these results on Wood's conjecture to a work of Irwin and Solecki on projective Fra\"iss\'e limits and projective homogeneity of the pseudo-arc and, addressing their conjecture, we show that the pseudo-circle is not approximately projectively homogeneous.
\end{abstract}
\maketitle

\section{Introduction}
In 1982 G.V. Wood stated the following conjecture, in the isometric theory of Banach spaces. 

\noindent
\textbf{Wood's Conjecture, \cite{Wo}}. \textit{Suppose $L$ is a locally compact Hausdorff space such that the space of all scalar-valued functions vanishing at infinity $C_0(L,\mathbb{K})$, equipped in the supremum norm, is almost transitive. Then $L$ consists of a single point.}

Wood's conjecture is related to Banach-Mazur rotation problem, which asks whether a separable Banach space with a transitive norm has to be isometric or isomorphic to a Hilbert space. Recall that a Banach space $(Y,||\cdot||)$ is called \textit{almost transitive} if for any $\epsilon>0$ and any $y_1,y_2\in Y$ with $||y_1||=||y_2||=1$ there exists a surjective linear isometry $T$ such that $||Ty_1-y_2||<\epsilon$, and it is called \textit{transitive} if for such $y_1$ and $y_2$ there exists a surjective linear isometry $T$ with $Ty_1=y_2$. As a consequence of Banach-Stone Theorem, Wood's Conjecture is a topological problem. In 1997 Greim and Rajagopalan \cite{GR} proved Wood's Conjecture in the real case; i.e. if $C_0(L,\mathbb{R})$ is almost transitive then $L$ is a singleton. They also showed that the existence of a counterexample in the complex case implies the existence of a nonmetric locally compact Hausdorff space $\tilde L$ such that $C_0(\tilde L,\mathbb{C})$ is transitive. In 2005 Kawamura and Rambla, independently, disproved Wood's conjecture.
\begin{theorem}[Kawamura \cite{KK2}, Rambla \cite{Ra}]\label{metric}
If $X$ is a pseudo-arc and $p\in X$ then for $L=X\setminus\{p\}$ the space $C_0(L,\mathbb{C})$ is almost transitive.
\end{theorem}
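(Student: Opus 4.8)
The plan is to combine the Banach--Stone description of the isometry group with the exceptional homogeneity of the pseudo-arc. First I would record the structural setup: since $X$ is compact and $L=X\setminus\{p\}$ is a dense, open, non-compact subset with one-point complement, $X$ is the one-point compactification of $L$, so $C_0(L,\mathbb{C})$ is the hyperplane $\{f\in C(X,\mathbb{C}):f(p)=0\}$, and every self-homeomorphism of $L$ extends uniquely to a self-homeomorphism of $X$ fixing $p$. By the Banach--Stone theorem for $C_0$-spaces, every surjective linear isometry $T$ of $C_0(L,\mathbb{C})$ has the weighted-composition form $Tf=u\cdot(f\circ\varphi)$, where $\varphi$ is a homeomorphism of $X$ with $\varphi(p)=p$ and $u\colon L\to\mathbb{T}$ is continuous and unimodular. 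Hence an isometry transports the modulus $|f|$ purely through $\varphi$ and is free to adjust the phase through $u$.

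This splits the problem into a modulus part and a phase part, and the phase part is the easy one. The plan is to reduce almost transitivity to the purely topological statement: given norm-one $f_1,f_2\in C_0(L,\mathbb{C})$ and $\epsilon>0$, find a homeomorphism $\varphi$ of $X$ fixing $p$ with $\big\||f_1|\circ\varphi-|f_2|\big\|<\epsilon$. Once the moduli are matched, the phase correction that turns $f_1\circ\varphi$ into $f_2$ on the region where both functions exceed $\epsilon$ is pulled back from a map of $X$ into a cube $[0,1]^2$ built from the two interval coordinates below; it therefore lifts to a continuous real argument and extends unimodularly to all of $L$, yielding a weight $u$ with $\|u\cdot(f_1\circ\varphi)-f_2\|$ of order $\epsilon$.

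Next I would pass to a convenient dense class. Since $X$ is arc-like, functions of the form $g\circ\pi$ with $\pi\colon X\to[0,1]$ an $\epsilon$-map satisfying $\pi(p)=0$ and $g\in C([0,1],\mathbb{C})$, $g(0)=0$, are dense in $C_0(L,\mathbb{C})$, so it suffices to match the moduli of two such functions. Each nonnegative modulus $|g_i|\circ\pi_i$ is then encoded, up to $\epsilon$, by a finite combinatorial pattern of heights along a chain covering $X$, with the link containing $p$ carrying height $0$.

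The hard part, and the genuine content of the theorem, is the final step: showing that any two such chain-height patterns, agreeing on the puncture condition at $p$, can be interchanged to within $\epsilon$ by a self-homeomorphism of $X$ fixing $p$. Bing's homogeneity supplies only point-to-point transitivity, whereas what is needed is a far stronger pattern-matching homogeneity. I would establish it from the presentation of the pseudo-arc as an inverse limit of arcs with crooked bonding maps: crookedness allows a fine chain to be folded onto any prescribed coarser chain pattern while respecting the distinguished link at $p$, and an inductive matching of chains through successive levels of the inverse limit assembles the homeomorphism $\varphi$. Making this folding argument precise --- controlling the endpoints, the puncture, and the uniform estimates through the inverse limit --- is where essentially all the difficulty lies, and is the step I expect to be the main obstacle.
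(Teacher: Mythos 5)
Your reduction coincides with the framework the paper itself works in. The modulus/phase split you set up is exactly the paper's decomposition of almost transitivity into \emph{almost positive transitivity} plus \emph{almost polar decompositions} (Section 2): the homeomorphism $\varphi$ matching $|f_1|\circ\varphi$ to $|f_2|$ is the positive-transitivity half, and your unimodular weight $u$ is the polar half, which the paper obtains from Rambla's Lemma \ref{polar} via covering dimension at most $1$, whereas you lift the phase through the two interval coordinates. Your variant is sound --- a circle-valued map on a compact subset of an arc admits a continuous argument, so the product phase lifts and extends by Tietze --- and it is a reasonable hands-on substitute for the dimension-theoretic lemma, at the cost of working only on the chain-factored dense class. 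One detail needs repair: demanding an $\epsilon$-map $\pi$ with $\pi(p)=0$ amounts to saying that $p$ is an endpoint of the pseudo-arc; this is true of every point, but it is itself a nontrivial theorem of Bing, and it is cleaner (and sufficient for your argument) to normalize $g(\pi(p))=0$ rather than $\pi(p)=0$.

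The genuine gap is the step you yourself flag at the end. The assertion that any two onto maps $f,g\colon X\to[0,1]$ with $f(P)=g(P)=0$ can be matched to within $\epsilon$ by a self-homeomorphism fixing $P$ is not a consequence of homogeneity plus a routine crookedness induction; it \emph{is} the theorem of Kawamura and Rambla, and it is precisely what this paper quotes in Section 2 without proof (the same statement that Irwin and Solecki later reproved by an entirely different, projective Fra\"iss\'e-theoretic route, and that the paper's Section 3 bootstraps through an $\omega_1$-inverse limit for the nonmetric case). Your one-sentence plan --- fold a fine crooked chain onto a prescribed coarser pattern while respecting the link at $p$, then pass through the inverse limit --- names the right tool, but executing that folding with control of the fixed point, the surjectivity, and the uniform estimate constitutes essentially the whole content of Kawamura's and Rambla's papers. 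So, judged as a self-contained proof, the proposal is incomplete exactly at its mathematical core; judged as a reduction of Theorem \ref{metric} to the quoted pattern-matching lemma, it is correct and agrees with the paper's own treatment.
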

The \textit{pseudo-arc} is a homogeneous 1-dimensional compact and connected space first constructed by Knaster \cite{Kn}. It is an object of intense research both in topology \cite{Prajs}, \cite{Lewis} and dynamical systems \cite{Kennedy},\cite{Lewis},\cite{Martensen}, \cite{Minc}. It is hereditarily equivalent and hereditarily indecomposable.  By the work of Greim and Rajagopalan there follows the second, nonmetric counterexample (see also \cite{CF}).
\begin{theorem}[Greim\&Rajagopalan \cite{GR}, Kawamura \cite{KK2}, Rambla \cite{Ra}]\label{Greim}
There exists a locally compact Hausdorff nonmetric space $N$ such that $C_0(N,\mathbb{C})$ is transitive. In addition, $N$ has the following properties.
\begin{itemize}
\item[(a)] $N$ is not first countable,
\item[(b)] $N$ is countably compact,
\item[(c)] countable unions of compact sets are relatively compact,
\item[(d)] $N$ is an uncountable union of nowhere dense components, or connected,
\item[(e)] $N$ is not totally disconnected,
\item[(f)] every compact subset of $N$ is an $F$-space,
\item[(g)] for every open $K_\sigma$ set $U$, $\beta U=\cl U$,
\item[(h)] every open $K_\sigma$ set $U$ contains an open subset $V$ with $\beta V\neq\cl V$,
\item[(i)] every infinite compact subset of $L$ contains a copy of $\beta \mathbb{N}$,
\item[(j)] every non-empty compact $G_\delta$ contains an open set,
\item[(k)] all non-empty open $K_\sigma$ subsets are homeomorphic,
\item[(l)] all nonempty compact $G_\delta$ subsets are homeomorphic.
\end{itemize}
\end{theorem}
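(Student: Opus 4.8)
\section*{Proof proposal}

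The plan is to combine the metric example of Theorem~\ref{metric} with the abstract upgrading procedure of Greim and Rajagopalan, which turns an almost transitive complex $C_0$-space into a genuinely \emph{transitive} one at the cost of passing to a nonmetric space. Concretely, set $X$ to be the pseudo-arc, fix $p\in X$, and put $L=X\setminus\{p\}$, so that $C_0(L,\mathbb{C})$ is almost transitive by Theorem~\ref{metric}. Fix a countably incomplete (in particular non-principal) ultrafilter $\mathcal{U}$ on $\mathbb{N}$ and form the Banach space ultrapower $B=\bigl(C_0(L,\mathbb{C})\bigr)_{\mathcal{U}}$. The argument then proceeds in two stages: first, that $B$ is transitive; second, that $B\cong C_0(N,\mathbb{C})$ for a locally compact Hausdorff nonmetric space $N$, whose topology can be read off from an ultracoproduct description.

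For transitivity I would use the standard ultrapower trick. Given unit vectors $\hat y_1=[(y_1^n)]$ and $\hat y_2=[(y_2^n)]$ in $B$, normalized so that $\|y_1^n\|=\|y_2^n\|=1$, almost transitivity of $C_0(L,\mathbb{C})$ supplies for each $n$ a surjective linear isometry $T_n$ with $\|T_ny_1^n-y_2^n\|<1/n$. The induced map $\hat T=[(T_n)]$ is a surjective linear isometry of $B$ (with inverse $[(T_n^{-1})]$), and $\|\hat T\hat y_1-\hat y_2\|=\lim_{\mathcal{U}}\|T_ny_1^n-y_2^n\|=0$, so $\hat T\hat y_1=\hat y_2$ exactly. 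For the identification of $B$, I would use that the Banach space ultrapower of a commutative $C^*$-algebra is again a commutative $C^*$-algebra, so by Gelfand duality $B\cong C_0(N,\mathbb{C})$ for some locally compact Hausdorff $N$. To pin down $N$, I would realize it as the compact ultracoproduct $X_{\mathcal{U}}$ of $X$, namely the fiber over $\mathcal{U}$ of the canonical map $\beta\!\left(\coprod_{\mathbb{N}}X\right)\to\beta\mathbb{N}$, with the single point $p_{\mathcal{U}}$ (the ultracoproduct image of the constant sequence $p$) deleted; equivalently, $N$ is the locally compact ultracoproduct of the copies of $L$, matching the non-unital ideal $M_p\cong C_0(L,\mathbb{C})$ whose ultrapower is $B$. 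Since $\mathcal{U}$ is non-principal and $X$ is infinite, $X_{\mathcal{U}}$ has uncountable weight, so $N$ is nonmetric.

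It then remains to read off properties (a)--(l) from the ultracoproduct structure of $N$ together with the specific topology of the pseudo-arc, relying on Bankston's theory of ultracoproducts. Properties (f)--(i) are the $F$-space and $\beta\mathbb{N}$-embedding phenomena characteristic of Stone--\v{C}ech-type constructions: ultracoproducts of compacta are $F$-spaces, every infinite compact subset absorbs a copy of $\beta\mathbb{N}$, and on an open $K_\sigma$ set $U$ one has $\beta U=\cl U$, while on a suitable open $V\subseteq U$ this fails. Countable compactness (b) and relative compactness of countable unions of compacta (c) follow from countable incompleteness of $\mathcal{U}$. Since connectedness is preserved by ultracoproducts, $X_{\mathcal{U}}$ is a nondegenerate continuum, which yields (d) and (e) after deleting the single point $p_{\mathcal{U}}$. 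Finally, the transitivity of $B$ translates, via the complex Banach--Stone theorem (surjective isometries are weighted composition operators $f\mapsto u\cdot(f\circ\varphi)$ with $\varphi$ a homeomorphism and $|u|\equiv 1$), into a strong homogeneity of $N$, from which (j)--(l) follow using the canonical uniform families of compact $G_\delta$ and open $K_\sigma$ subsets of an ultracoproduct.

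The hard part, I expect, is twofold. First, the precise $C^*$-algebraic bookkeeping that identifies $B\cong C_0(N,\mathbb{C})$ and keeps the construction honestly in the locally compact, non-unital regime, i.e.\ verifying that the ultrapower of the ideal $M_p$ really is $C_0$ of $X_{\mathcal{U}}\setminus\{p_{\mathcal{U}}\}$. Second, the verification of the more delicate entries (d), (e) and (j)--(l), where the idiosyncratic topology of the pseudo-arc---hereditary indecomposability, one-dimensionality, and the absence of cut points---must be married to the abstract ultracoproduct machinery in order to control the component structure of $N$ and the homeomorphism types of its compact $G_\delta$ and open $K_\sigma$ subsets.
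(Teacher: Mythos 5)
Your proposal is correct and is essentially the construction this theorem is imported from: the paper gives no proof of its own (Theorem~\ref{Greim} is cited from \cite{GR} combined with \cite{KK2}, \cite{Ra}), and the paper's remark following Theorem~\ref{M} confirms that $\alpha N$ is precisely the topological ultracoproduct of the pseudo-arc over a countably incomplete ultrafilter --- i.e.\ the Gelfand spectrum of the ultrapower you build, with your identification of the ultrapower of the ideal $M_p$ as $C_0$ of the ultracoproduct minus the point $p_{\mathcal{U}}$ being the right bookkeeping. One simplification worth noting: you need not re-derive properties (a)--(l) from the ultracoproduct machinery, since Greim and Rajagopalan prove in \cite{GR} that all of them hold for \emph{any} locally compact Hausdorff space whose complex $C_0$-space is transitive, so they follow from transitivity alone once your first stage is complete.
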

In fact, Greim and Rajagopalan showed that the properties (a)-(l) must hold for any space $L$ such that $C_0(L,\mathbb{C})$ is transitive. In this paper we provide a new almost transitive but not transitive example. In 1985 M. Smith constructed a homogeneous, hereditarily equivalent and hereditarily indecomposable nonmetric Hausdorff continuum, since then referred to as \textit{nonmetric pseudo-arc} \cite{Sm}. In the present paper we show that this space may be used to exhibit another counterexample to the conjecture of Wood.
\begin{theorem}\label{M}
Suppose $M$ is the nonmetric pseudo-arc of Smith and $p\in M$. Then for $\tilde{M}=M\setminus\{p\}$ the space $C_0(\tilde{M},\mathbb{C})$ is almost transitive. In addition,
\begin{itemize}
\item[(a)] $\tilde M$ and $M$ are not first countable at any point,
\item[(b)] $\tilde M$ and $M$ are separable,
\item[(c)] $C_0(\tilde M,\mathbb{C})$ is not separable,
\item[(d)] $\tilde M$ and $M$ contain an infinite convergent sequence.
\end{itemize}
\end{theorem}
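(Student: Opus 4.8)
The plan is to derive almost transitivity from the metric case (Theorem~\ref{metric}) through the inverse–limit presentation of Smith's continuum, and to read off (a)--(d) from the same presentation. Write $M=\varprojlim_{\alpha<\omega_1}(M_\alpha,f_\alpha)$ (the index set taken for concreteness to be $\omega_1$), where each factor $M_\alpha$ is a metric pseudo-arc, the bonding maps $f_\alpha\colon M_{\alpha+1}\to M_\alpha$ are crooked (with inverse limits at limit ordinals), and $\pi_\alpha\colon M\to M_\alpha$ are the projections. By the Banach--Stone theorem every surjective linear isometry of $C_0(\tilde M,\mathbb{C})$ is a weighted composition operator $f\mapsto u\cdot(f\circ h)$, with $h$ a self-homeomorphism of $M$ fixing $p$ and $u\colon\tilde M\to\{z\in\mathbb{C}:|z|=1\}$ continuous; thus almost transitivity amounts to: for unit functions $f,g\in C_0(\tilde M,\mathbb{C})$ and $\epsilon>0$ one can choose such a pair $(h,u)$ with $\|u\cdot(f\circ h)-g\|<\epsilon$.

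For the core estimate I would first use a Stone--Weierstrass argument: the pullbacks $\bigcup_{\alpha<\omega_1}\pi_\alpha^{*}C(M_\alpha)$ form a conjugation-closed subalgebra that separates points, hence are dense in $C(M,\mathbb{C})$. Keeping track of the single value at $p$, given unit $f,g$ and $\epsilon>0$ one selects $\alpha<\omega_1$ and $f_\alpha,g_\alpha\in C_0(M_\alpha\setminus\{\pi_\alpha(p)\},\mathbb{C})$ with $\|f-f_\alpha\circ\pi_\alpha\|$ and $\|g-g_\alpha\circ\pi_\alpha\|$ below $\epsilon/3$. Since $M_\alpha$ is a metric pseudo-arc, Theorem~\ref{metric} applies to $C_0(M_\alpha\setminus\{\pi_\alpha(p)\},\mathbb{C})$ and furnishes a homeomorphism $h_\alpha$ of $M_\alpha$ fixing $\pi_\alpha(p)$ and a unimodular weight $u_\alpha$ with $\|u_\alpha\cdot(f_\alpha\circ h_\alpha)-g_\alpha\|<\epsilon/3$. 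The weight transports up the tower harmlessly as $u_\alpha\circ\pi_\alpha$, and the three estimates close provided $h_\alpha$ can be lifted to a self-homeomorphism $\hat h$ of $M$ that fixes $p$ and satisfies $\pi_\alpha\circ\hat h=h_\alpha\circ\pi_\alpha$.

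I expect this lifting to be the main obstacle: a homeomorphism of a single factor does not in general extend through an $\omega_1$-tower. Here I would mine the machinery behind Smith's homogeneity theorem for an extension lemma, to the effect that the homeomorphisms of $M_\alpha$ arising above (from the same class used to move points in the homogeneity proof, compatibly with the crooked $f_\beta$) lift to homeomorphisms of $M$ by transfinite recursion --- the successor steps controlled by the near-homeomorphism and crookedness properties of the bonding maps, the limit steps by coherence of threads. Arranging the lift to fix $p$, so that $\hat h$ descends to $\tilde M$ and $u_\alpha\circ\pi_\alpha\cdot(f\circ\hat h)$ stays in $C_0(\tilde M,\mathbb{C})$, is the additional bookkeeping; the metric input and the approximation are comparatively routine.

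The supplementary items follow from the same picture. For (c): for locally compact Hausdorff $L$ the algebra $C_0(L,\mathbb{C})$ is separable only if $L$ is second countable, and $\tilde M$ is not second countable --- otherwise its one-point compactification $M$ would be compact metrizable, contradicting nonmetrizability --- so $C_0(\tilde M,\mathbb{C})$ is nonseparable. For (a): an $\omega_1$-indexed inverse limit with essential bonding maps has uncountable character at each of its points, and passing to the open subspace $\tilde M$ leaves the character at points $x\neq p$ unchanged, while homogeneity of $M$ transfers the conclusion to $p$; hence neither space is first countable at any point. For (b): the cardinality bound $\aleph_1\le\mathfrak{c}$ renders the ambient product $\prod_{\alpha<\omega_1}M_\alpha$ separable by the Hewitt--Marczewski--Pondiczery theorem, and Smith's construction in fact supplies a countable set of threads dense in $M$, separability then passing to $\tilde M$. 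Finally (d): a sequence of threads agreeing with a fixed thread in all coordinates beyond some countable stage $\alpha_0$ and converging coordinatewise in the metrizable initial product $\prod_{\alpha<\alpha_0}M_\alpha$ converges in $M$; produced from the tower and made to avoid $p$, it gives the required infinite convergent sequence, so $M$ carries convergent sequences despite having uncountable character everywhere.
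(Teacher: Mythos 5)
Your overall skeleton for the main statement (approximate $f,g$ at a countable stage $\gamma$, invoke the metric Kawamura--Rambla theorem there, lift the resulting homeomorphism up the $\omega_1$-tower) is the same as the paper's, but two of your steps fail as written. First, the weight does \emph{not} ``transport up the tower harmlessly'': Banach--Stone at level $\gamma$ gives $T_\gamma F=u_\gamma\cdot(F\circ h_\gamma)$ with $u_\gamma$ continuous and unimodular only on $X_\gamma\setminus\{\pi_\gamma(p)\}$, so $u_\gamma\circ\pi_\gamma$ is defined only on $M\setminus\pi_\gamma^{-1}(\pi_\gamma(p))$. The fiber $\pi_\gamma^{-1}(\pi_\gamma(p))$ is a nondegenerate continuum, so it meets $\tilde M=M\setminus\{p\}$ in uncountably many points, and $u_\gamma$ need not extend continuously across the puncture (unimodular functions on a punctured pseudo-arc can oscillate without limit); hence $F\mapsto(u_\gamma\circ\pi_\gamma)\cdot(F\circ\hat h)$ is not a well-defined operator on $C_0(\tilde M,\mathbb{C})$. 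This is exactly why the paper never lifts weights: it proves only \emph{almost positive transitivity} upstairs, where one may work with unweighted composition operators $F\mapsto F\circ\hat h$, and recovers full almost transitivity from Rambla's almost-polar-decomposition lemma (Lemma \ref{polar}), which applies because $M$ has covering dimension $1$ by Katuta's lemma (Lemma \ref{Katuta}, using closedness of the projections $\pi_\alpha$) --- an ingredient entirely missing from your proposal. Second, the lifting step you defer to ``machinery behind Smith's homogeneity theorem'' is the actual mathematical content, and your guess about its mechanism is off: it is not crookedness or any near-homeomorphism property of the bonding maps, but Lewis's decomposition theorems (Theorems \ref{WL1}--\ref{WL3}) that lift a homeomorphism of the quotient through each continuous decomposition into pseudo-arcs, with Theorem \ref{WL2} used at every successor stage to keep the lift fixing $P_\alpha$, and Theorem \ref{new1} providing coherence at limit ordinals.

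Among the supplementary items, (a) and (c) are essentially right, but (b) and (d) have genuine gaps. For (b), the Hewitt--Marczewski--Pondiczery theorem gives separability of the \emph{product} $\prod_{\alpha<\omega_1}X_\alpha$, which is irrelevant: separability does not pass to closed subspaces, so it says nothing about the inverse limit $M$; and ``Smith's construction supplies a countable dense set of threads'' is precisely the assertion to be proved. The paper proves it via a selection lemma (Lemma \ref{lem42}): a countable dense subset of $X_\alpha$ can be lifted to a countable dense subset of $X_{\alpha+1}$ by choosing one point in each corresponding fiber, and transfinite induction (threads at limit stages) then produces a countable dense set of threads in $M$. For (d), your construction is incoherent in an inverse limit: if a thread agrees with the fixed thread $p$ at all coordinates $\alpha\ge\alpha_0$, then it agrees at every coordinate below $\alpha_0$ as well, since lower coordinates are the images of higher ones under the bonding maps; so your ``sequence'' is the constant sequence $p$. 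The paper instead constructs the convergent sequence by transfinite recursion through all $\omega_1$ levels: at a successor stage the fibers $f_{\alpha-1}^{-1}(q_n^{\alpha-1})$ converge (by continuity of the decomposition) to the fiber $f_{\alpha-1}^{-1}(q^{\alpha-1})$, which allows a choice of points $q_n^{\alpha}$ in them converging to a point $q^{\alpha}$ of the limit fiber, with threads taken at limit stages.
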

Note that the property $(d)$ in Theorem \ref{M} ensures that $\tilde M$ is not homeomorphic to $N$, in view of property $(i)$ of Theorem \ref{Greim}, hence $C_0(\tilde M,\mathbb{C})$ is not transitive. Up to our knowledge this is only the third known counterexample to Wood's Conjecture in the complex case. Note also that, by the construction in \cite{GR}, the one-point compactification of the space $N$ in Theorem \ref{Greim} is the so-called ``topological ultracoproduct'' of the pseudo-arc $X$ following $U$, where $U$ is any countably incomplete ultrafilter (see e.g. Chapter 4 in \cite{Aviles}). This is pertinent here because the Banach space ultraproduct construction can be regarded as a rather special case of a projective limit
(in the category of Banach spaces) and since the Gel'fand-Mazur functor
$K\longleftrightarrow C(K)$ intertwines direct and inverse limits (in both directions), the
point is that all previously known counterexamples to Wood's conjecture
are direct limits of pseudo-arcs, while the one in Theorem \ref{M} is an inverse limit. Equivalently, the corresponding space of continuous functions $C_0(\tilde{M})$ can be seen as the direct limit of a system of
homomorphisms $C_0(X_\alpha)\to C_0(X_\beta)$ with $\beta<\alpha<\omega_1$, where $\omega_1$ is the first
uncountable ordinal\footnote{We are grateful to an anonymous referee for bringing these facts on ultraproducts and direct limits to our attention}.

There are known, however, necessary conditions on the space $L$ to have an almost transitive space $C_0(L,\mathbb{C})$.
\begin{theorem} Suppose the one-point compactification $\alpha L$ of a locally compact Hausdorff space $L$ is metrizable and $C_0(L,\mathbb{C})$ is almost positive transitive. Then
\begin{enumerate}
	\item (Wood, \cite{Wo}) $\alpha L$ is connected,
	\item (Cabello S\'anchez, \cite{CF2}) $dim(L)=dim(\alpha L)=1$,
	\item (Rambla, \cite{Ra}) Every subcontinuum of $L$ is hereditarily indecomposable,
	\item (Rambla, \cite{Ra}) $L$ is almost homogeneous; i.e. for any $\epsilon>0$ and $x,y\in L$ there exists a homeomorphism $\tau:L\to L$ such that $d_L(\tau(x),y)<\epsilon$.
	\end{enumerate}
\end{theorem}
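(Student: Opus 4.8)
The plan is to derive all four assertions from a single structural input, the Banach--Stone representation of surjective linear isometries of $C_0(L,\mathbb{C})$: every such isometry $T$ has the weighted-composition form $(Tf)(y)=u(y)\,f(\phi(y))$, where $\phi\colon L\to L$ is a homeomorphism and $u\colon L\to\mathbb{C}$ is continuous with $|u|\equiv 1$. I extend each norm-one $f$ to $\alpha L$ by $f(\infty)=0$ and work on the compact metric space $\alpha L$. Two elementary invariants fall out: (i) $|Tf|=|f\circ\phi|$, so the compact set $\operatorname{range}(|f|)\subseteq[0,1]$ is unchanged by $T$; and (ii) the peak set $\{|Tf|=1\}=\phi^{-1}\{|f|=1\}$ is carried homeomorphically. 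From (i) and almost positive transitivity I first record the key lemma that there is a single compact $R\subseteq[0,1]$ with $0,1\in R$ and $\operatorname{range}(|f|)=R$ for every positive norm-one $f$: since $g\ge 0$ and $f\ge 0$ give $\big\||Tf|-g\big\|\le\|Tf-g\|$, the ranges of the moduli of any two positive norm-one functions lie within $\epsilon$ of one another for every $\epsilon$, hence coincide.

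I then obtain parts (1) and (4) directly. For connectedness (Wood), suppose $\alpha L$ were disconnected; then it contains a proper nonempty compact-open $A\subseteq L$, so $f_1=\chi_A$ is positive norm-one with $\operatorname{range}(|f_1|)=\{0,1\}$, forcing $R=\{0,1\}$. If some component of $\alpha L$ is nondegenerate, Urysohn's lemma produces a positive norm-one $f_2$ with $\operatorname{range}(|f_2|)=[0,1]$, so $R=[0,1]$, a contradiction; and if $\alpha L$ is totally disconnected with more than one point one builds a positive norm-one function attaining three distinct values, again contradicting $R=\{0,1\}$ unless $L$ is a single point. Hence $\alpha L$ is connected. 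For almost homogeneity (Rambla, part (4)), given $x,y\in L$ and $\epsilon>0$, choose by Urysohn a positive norm-one $f$ peaking sharply at $y$ (with $f(y)=1$, $\operatorname{supp}f\subseteq B(y,\epsilon)$) and a positive norm-one $g$ with $g(x)=1$; picking $T$ with $\|Tf-g\|<\eta$ and evaluating at $x$ gives $f(\phi(x))>1-\eta>0$, so $\phi(x)\in\operatorname{supp}f\subseteq B(y,\epsilon)$, and $\tau:=\phi$ is the required homeomorphism with $d(\tau(x),y)<\epsilon$.

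Part (3), Rambla's theorem that every subcontinuum of $L$ is hereditarily indecomposable, is considerably deeper and I would follow his analysis rather than reprove it. The plan is to argue by contradiction: if some subcontinuum $K\subseteq L$ contained a decomposable subcontinuum $K_0=K_1\cup K_2$ with $K_1,K_2$ proper subcontinua, one manufactures, adapted to this decomposition, a norm-one function whose modulus is organized along a nondegenerate continuum in a way that a sharply peaked function cannot imitate. One then shows that almost transitivity---which by part (4) already forces point-set homogeneity---cannot reconcile such a function with a generic peak function while respecting both invariants (i) and (ii). Making this rigorous rests on the crookedness characterization of hereditary indecomposability and the fine combinatorics of chains approximating the continuum, and this is the first of the two genuinely hard steps.

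Finally, part (2) (Cabello S\'anchez) splits into a lower and an upper bound on the covering dimension. Once (1) shows $\alpha L$ is a nondegenerate connected compact metric space, $\dim(\alpha L)\ge 1$ is immediate, and $\dim(L)=\dim(\alpha L)$ since removing one point cannot lower covering dimension for such spaces. The upper bound $\dim(\alpha L)\le 1$ is the main obstacle of the whole statement. The heuristic I would start from is that, by the range lemma, $\operatorname{range}(|f|)=[0,1]$ for every norm-one $f$, so that near each peak point the unit sphere of $C_0(L,\mathbb{C})$ is filtered by a single real level parameter, suggesting an essentially one-parameter local structure; but converting this into a rigorous covering-dimension estimate requires Cabello S\'anchez's dimension-theoretic argument, which I would invoke directly. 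Thus I expect the two genuine difficulties to be the dimension bound in (2) and the hereditary indecomposability in (3), with (1) and (4) being elementary consequences of the weighted-composition form.
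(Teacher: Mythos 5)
This theorem is quoted in the paper purely as background: each item carries only a citation (Wood for (1), Cabello S\'anchez for (2), Rambla for (3)--(4)), and no proof of any part appears anywhere in the text, so there is no internal argument to compare yours against. Measured against that, your proposal does strictly more than the paper. Your proofs of (1) and (4) are correct and are in substance the classical ones: the Banach--Stone weighted-composition form $Tf=u\cdot(f\circ\phi)$ (which the paper itself invokes in Section 2), the identity $|Tf|=f\circ\phi$ for $f\ge 0$, and the resulting invariance of the compact set $f(\alpha L)\subseteq[0,1]$ across all positive norm-one $f$; the dichotomy ``some nondegenerate component versus totally disconnected'' settles (1), and the Urysohn peak-function evaluation at $x$ settles (4). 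Two minor points: in (1) the accurate conclusion is that either $L$ is a singleton or $\alpha L$ is connected (for a one-point $L$ the space $C_0(L,\mathbb{C})=\mathbb{C}$ is transitive while $\alpha L$ is a two-point space), which your parenthetical already concedes; and in (4) you need $\eta<1$, after which the argument works for any metric compatible with the topology of $L$. For (2) and (3), however, you do not give proofs: what you write are heuristics followed by an explicit deferral to Cabello S\'anchez and Rambla. Relative to this paper that is not a gap---the paper disposes of all four items by citation---but the sketches should not be read as outlines that could be completed routinely: the genuine content of (3) is Rambla's analysis resting on crookedness, and that of (2) is a real dimension-theoretic argument for $\dim(\alpha L)\le 1$, neither of which is recoverable from your ``peak function'' or ``one-parameter filtration'' heuristics. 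Your reductions inside (2) are fine: $\dim(\alpha L)\ge 1$ follows from nondegenerate connectedness, and $\dim(L)=\dim(\alpha L)$ holds by the subspace and countable closed sum theorems, using that $L$ is $\sigma$-compact because $\alpha L$ is compact metrizable.
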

In his paper, Rambla also noted that a natural candidate to provide another counterexample would be R.H. Bing's pseudo-circle \cite{Bi}, since it is hereditarily indecomposable and almost homogeneous. The \textit{pseudo-circle} is another important object in topology (see e.g.\cite{KeRo}), and topological dynamics, appearing in dynamical decompositions of the 2-torus \cite{BCJ}, as a Birkhoff-like attractor\cite{BOp}, boundary of a Siegel disk in the complex plane\cite{Che}, as well as minimal attractor of surface diffeomorphisms\cite{Ha} (see also \cite{KY}). In addition to being hereditarily indecomposable and almost homogeneous, the pseudo-circle shares with the pseudo-arc the property that its every proper subcontinuum is homeomorphic to the pseudo-arc, and therefore indeed it is a natural candidate for a counterexample. We show however, that the pseudo-circle does not have the required property, for a generic choice of points.
\begin{theorem}\label{notalmost}
Let $C$ be the pseudo-circle. There is a dense set $D\subseteq C$ such that for any $p\in D$ for $L=C\setminus\{p\}$ the space $C_0(L,\mathbb{C})$ is not almost transitive.
\end{theorem}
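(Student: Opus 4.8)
The plan is to reduce the statement to a purely topological one about the failure of almost homogeneity, and then to defeat almost homogeneity using the circle-like structure of $C$. The reduction is clean. Since $C$ is a metric continuum with no isolated points, $C$ is the one-point compactification of $L=C\setminus\{p\}$, so $\alpha L=C$ is metrizable; moreover $C$ is connected, one-dimensional, and hereditarily indecomposable, so every subcontinuum of $L$ (being a subcontinuum of $C$) is hereditarily indecomposable. Thus conditions (1)--(3) of the necessary conditions recalled above hold automatically and carry no information, and by Rambla's condition (4) \cite{Ra} (almost transitivity of $C_0(L,\mathbb{C})$ forces the almost homogeneity of $L$) the only route to the desired conclusion is to show that $C\setminus\{p\}$ is \emph{not} almost homogeneous. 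Hence it suffices to exhibit a dense set $D\subseteq C$ such that $C\setminus\{p\}$ is not almost homogeneous for every $p\in D$.

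First I would pass to the homeomorphism group of $C$. By functoriality of the one-point compactification, every self-homeomorphism of $L=C\setminus\{p\}$ extends uniquely to a self-homeomorphism of $C$ fixing the added point $p$, and conversely; so $\operatorname{Homeo}(C\setminus\{p\})$ is identified with the stabiliser $\operatorname{Homeo}(C)_p=\{h\in\operatorname{Homeo}(C):h(p)=p\}$. Consequently $C\setminus\{p\}$ is almost homogeneous precisely when every orbit of $\operatorname{Homeo}(C)_p$ in $C\setminus\{p\}$ is dense, and to break almost homogeneity it is enough to produce one point $x$ and one nonempty open set $V\subseteq C\setminus\{p\}$ with $h(x)\notin V$ for all $h\in\operatorname{Homeo}(C)_p$.

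The obstruction should come from the circle-like nature of $C$, which is exactly what distinguishes it from the pseudo-arc, where Theorem \ref{metric} gives the opposite conclusion: $C$ admits an essential map onto $S^1$ and $\check H^1(C;\mathbb{Z})\cong\mathbb{Z}$, whereas for the arc-like pseudo-arc this group is trivial and no such circular rigidity is available. Concretely, I would fix a planar (or spherical) embedding of $C$, its two complementary domains, and the associated Carath\'eodory circles of prime ends, taking $D$ to be the set of points accessible from a fixed complementary domain (which is dense in $C$). The key structural input---on which the whole argument turns---is the rigidity of the induced action on the prime-end circle coming from the prime-end analysis behind the Fearnley--Rogers proof that $C$ is not homogeneous and the Kennedy--Rogers description of its orbits \cite{KeRo}: a homeomorphism of $C$ induces a suitably constrained homeomorphism of the prime-end circle (in the cleanest model a rotation or reflection), so that the cyclic parameter of accessible points is an essentially equivariant invariant. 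Granting this, a homeomorphism fixing an accessible point $p$ must fix the prime-end parameter of $p$, hence can only fix or reflect the parameters of the remaining accessible points about that of $p$.

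With this in hand I would finish as follows. Choose accessible $x,y\in C\setminus\{p\}$ whose prime-end parameters $\theta_x,\theta_y$ satisfy $\theta_y\notin\{\theta_x,\rho(\theta_x)\}$, where $\rho$ is the reflection fixing the parameter of $p$, with $\theta_y$ bounded away from both values, and let $V$ be a small neighbourhood of $y$. For every $h\in\operatorname{Homeo}(C)_p$ the parameter of $h(x)$ lies in $\{\theta_x,\rho(\theta_x)\}$, hence is bounded away from $\theta_y$; since points with far-apart prime-end parameters are bounded away from one another in the compact space $C$, this forces $h(x)\notin V$. Thus the $\operatorname{Homeo}(C)_p$-orbit of $x$ is not dense and $C\setminus\{p\}$ is not almost homogeneous, as required. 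The main obstacle is precisely the structural input of the previous paragraph: making the equivariance of the prime-end parameter rigorous, controlling the orientation-reversing (reflection) case, and verifying that accessibility is well behaved on a genuinely dense set $D$, so that fixing a single accessible point leaves no room to carry a well-chosen $x$ across the prime-end circle. I expect this to require the full strength of the prime-end machinery for the pseudo-circle, and it is here that the restriction to a dense---rather than arbitrary---set of points $p$ enters.
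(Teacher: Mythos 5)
Your reduction is legitimate as far as it goes: almost transitivity implies almost positive transitivity, and Rambla's condition (4) (with $\alpha L=C$ metrizable) then implies almost homogeneity of $L$, so non-almost-homogeneity of $C\setminus\{p\}$ would indeed suffice. But it is not ``the only route'' (the paper takes a different one, and never proves nor needs the stronger statement that $C\setminus\{p\}$ fails to be almost homogeneous), and your execution of it rests entirely on the ``key structural input'' that you yourself flag as unproven. That input is a genuine gap, and not one that can be patched, for three concrete reasons. (i) A self-homeomorphism of the abstract continuum $C$ need not extend to the annulus or to the complementary domains of the embedding, so it induces no map of the Carath\'eodory prime-end circle at all; prime-end data are invariants of \emph{ambient} homeomorphisms, not of intrinsic ones, and this non-extendability is precisely what makes the Kennedy--Rogers theorem (Theorem \ref{KeRo}) nontrivial. (ii) Even for a homeomorphism that does extend, the induced map on the circle of prime ends is merely \emph{some} homeomorphism of $\mathbb{S}^1$; a circle homeomorphism fixing the parameter of $p$ can carry the parameter of $x$ to any point of either complementary arc, so nothing confines $\theta_{h(x)}$ to the two-point set $\{\theta_x,\rho(\theta_x)\}$ --- there is no rigidity theorem forcing induced prime-end maps to be rotations or reflections. (iii) The final metric step, ``points with far-apart prime-end parameters are bounded away from one another in $C$,'' is unjustified and is exactly the kind of statement that fails for hereditarily indecomposable continua: impressions of prime ends are large, composants are dense, and accessible points whose external parameters are far apart can be arbitrarily close in $C$, so even granting (i) and (ii) you could not conclude $h(x)\notin V$.

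The missing idea, which is the heart of the paper's proof, is to replace the unavailable prime-end equivariance by an invariant that every self-map of $C$ possesses with no planar extension whatsoever: the degree, i.e.\ the induced endomorphism of $\check H^1(C)\cong\mathbb{Z}$. The paper takes a $4$-fold self-covering $\phi$ and the $16$-fold self-covering $\psi=\phi^2$ (Theorem \ref{HJ}), with a common fixed point $p$ (Theorem \ref{Bo}), and uses as test functions the displacement functions $f(x)=d_{\mathbb{A}}(x,\phi(x))$ and $g(x)=d_{\mathbb{A}}(x,\psi(x))$, which are positive and vanish exactly where needed, namely at $p$. For any homeomorphism $h$ of $C$ with $h(p)=p$, the map $\psi\circ h$ is again a $16$-fold covering fixing $p$; lifting everything to the universal cover of the annulus, the lifted displacement functions agree at a lift of $p$ but, because the degrees $4$ and $16$ differ, are forced apart by a definite multiple of $\pi$ after one loop around the annulus, so the intermediate value theorem together with the local isometry of the covering projection yields $\|f-g\circ h\|\geq\pi$ for \emph{every} such $h$. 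This defeats almost positive transitivity (hence almost transitivity) directly, with no claim about orbits of the stabilizer of $p$. Finally, the dense set $D$ does not come from accessible points but from Lemma \ref{lem}: one conjugates the covering maps by homeomorphisms of $C$, using Kennedy--Rogers to find a dense set of points that are fixed points of $n$-fold self-coverings. If you want to salvage your outline, the realistic course is to abandon the prime-end rigidity and rebuild the obstruction out of degree and covering-space lifts, as above.
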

In the course of this work we realized that the result of Kawamura and Rambla is closely related to the work of Irwin and Solecki on projective Fra\"iss\'e limits (see also \cite{Kubis}). They proved the following result, which they called approximate projective homogeneity of the pseudo-arc \cite{IS}.
\begin{theorem}
Suppose $X$ is a pseudo-arc and $Y$ is a chainable continuum. Given two surjections $f,g: X \rightarrow Y$ and any $\epsilon>0$ there exists a homeomorphism $\varphi$ of $X$ onto itself so that for all $x \in X$:
\begin{eqnarray*}
|f(x)-g(\varphi(x))| & < & \epsilon.
\end{eqnarray*}
In addition, the pseudo-arc is a unique chainable continuum with such a property.
\end{theorem}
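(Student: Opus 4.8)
The plan is to prove the two assertions separately: first the approximate projective homogeneity of the pseudo-arc $X$, and then its uniqueness among chainable continua enjoying this property. For the existence part I would work with the description of the pseudo-arc as the projective Fra\"iss\'e limit of the class $\mathcal{F}$ of finite linear graphs (finite reflexive, symmetric graphs whose vertices form a chain), the epimorphisms being the surjective graph homomorphisms that reflect adjacency. Writing $\mathbb{L}$ for this limit --- a compact zero-dimensional topological graph carrying a closed edge relation $R$ --- one has that $R$ is an equivalence relation and the quotient $q:\mathbb{L}\to\mathbb{L}/R$ is homeomorphic to $X$. The decisive feature I would exploit is the \emph{projective ultrahomogeneity} of $\mathbb{L}$: for any $A\in\mathcal{F}$ and any two epimorphisms $\phi,\psi:\mathbb{L}\to A$ there is an automorphism $\alpha$ of $\mathbb{L}$ with $\phi=\psi\circ\alpha$.

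Granting this, the existence argument proceeds as follows. Fix $\epsilon>0$ and surjections $f,g:X\to Y$ with $Y$ chainable. First I would choose a chain cover $\mathcal{U}$ of $Y$ of mesh $<\epsilon$; its nerve is a finite linear graph $A\in\mathcal{F}$, and after passing to a sufficiently fine refinement the two composites $f\circ q$ and $g\circ q$ can be approximated, within $\epsilon$ and compatibly with the chain structure, by genuine epimorphisms $\tilde f,\tilde g:\mathbb{L}\to A$. Projective ultrahomogeneity then yields an automorphism $\alpha$ of $\mathbb{L}$ with $\tilde f=\tilde g\circ\alpha$. Since $\alpha$ preserves $R$ it descends through $q$ to a homeomorphism $\varphi:X\to X$, and unwinding the approximations (a factor of $2\epsilon$ or $3\epsilon$ being absorbed by starting from a finer chain) gives $|f(x)-g(\varphi(x))|<\epsilon$ for all $x\in X$, as required.

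For uniqueness I would argue via Bing's characterization of the pseudo-arc as the \emph{unique hereditarily indecomposable chainable continuum}. Thus it suffices to show that a chainable continuum $Z$ with the approximate projective homogeneity property is hereditarily indecomposable. Arguing by contraposition, suppose some subcontinuum of $Z$ is decomposable; the prototype is the arc $[0,1]$, where one takes $f=\mathrm{id}$ and $g$ the tent map. Every self-homeomorphism of $[0,1]$ is monotone, so $g\circ\varphi$ retains a fold and cannot be uniformly $\epsilon$-close to the monotone $f$ once $\epsilon$ is smaller than the height of the fold. In general, a decomposable subcontinuum furnishes a monotone surjection onto an arc through which this folding obstruction can be transported, producing two surjections $f,g$ of $Z$ that no self-homeomorphism reconciles, contradicting the hypothesis. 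Hence $Z$ is hereditarily indecomposable, and therefore the pseudo-arc.

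The main obstacle is the existence half, and specifically two linked points: establishing the projective ultrahomogeneity of $\mathbb{L}$ (which rests on the amalgamation and projective universality of $\mathcal{F}$, essentially Bing's combinatorics of crooked chains in disguise), and controlling the descent from the zero-dimensional $\mathbb{L}$ and its finite quotients back to the metric continuum $X$, so that a purely combinatorial equality $\tilde f=\tilde g\circ\alpha$ at the level of $A$ translates into the quantitative estimate $|f-g\circ\varphi|<\epsilon$. Managing the mesh and refinement bookkeeping so that these approximations compose correctly is where the real care is needed; the uniqueness direction, once Bing's theorem is invoked, reduces to the comparatively soft construction of the folding obstruction.
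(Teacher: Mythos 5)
The statement you were asked to prove is not proved in this paper at all: it is quoted verbatim as background from Irwin and Solecki \cite{IS}. Your existence half is, in outline, precisely the Irwin--Solecki argument (the projective Fra\"iss\'e limit $\mathbb{L}$ of finite linear graphs, the fact that the edge relation $R$ is an equivalence relation with $\mathbb{L}/R$ the pseudo-arc, projective ultrahomogeneity, and the descent of the exact combinatorial identity $\tilde f=\tilde g\circ\alpha$ to the estimate $|f-g\circ\varphi|<\epsilon$), and that outline is sound, modulo the refinement bookkeeping you yourself flag.

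The uniqueness half, however, has a genuine gap at the step ``a decomposable subcontinuum furnishes a monotone surjection onto an arc.'' That claim is false. Let $Z=P_1\cup P_2$ be the union of two pseudo-arcs meeting in a single point which is an endpoint of each; by Bing's theorem on unions of chainable continua, $Z$ is chainable, and it is plainly decomposable, hence not hereditarily indecomposable. But neither $Z$ nor any of its subcontinua admits a monotone surjection onto $[0,1]$: a monotone image of a hereditarily indecomposable continuum is again hereditarily indecomposable (pull back a decomposition $C=A\cup B$ through the monotone map), so any monotone map of $Z$ into $[0,1]$ collapses each $P_i$ to a point and is therefore constant. Thus for this $Z$ your test maps $f,g$ cannot even be constructed, and the fold obstruction never gets started. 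A second, lesser issue: even when some proper subcontinuum $W\subsetneq Z$ does carry a monotone surjection onto an arc, a self-homeomorphism of $Z$ need not preserve $W$, and your proposal does not explain how an obstruction living on $W$ controls surjections defined on all of $Z$; the clean version of your argument (monotone $\mu$ versus $T\circ\mu$ with $T$ the tent map, using that $\mu\circ\varphi$ is again monotone and sends the points of irreducibility to $\{0,1\}$) genuinely needs $\mu$ monotone on all of $Z$.

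The conclusion does still hold for such $Z$, but it requires target continua other than the arc, which shows the mechanism must be different. For $Z=P_1\cup P_2$ one can take $Y=Z$, $f=\mathrm{id}$, and $g$ the surjection collapsing $P_2$ to a point and mapping $P_1$ onto all of $Z$ (such a map exists since every chainable continuum is a continuous image of the pseudo-arc \cite{Mi}); every homeomorphism $\psi$ of $Z$ permutes $\{P_1,P_2\}$, from which one gets a uniform lower bound on $d(g,\psi)$, and since $d(f,g\circ\varphi)=d(\varphi^{-1},g)$ the approximate intertwining fails. So your reduction to Bing's characterization of the pseudo-arc is the right frame, but the contrapositive needs an obstruction that survives on decomposable chainable continua, like $P_1\cup P_2$, in which no arc-like (monotone) structure is present; this is exactly why the uniqueness proof in \cite{IS} does not reduce to folds over the arc.
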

In their paper Irwin and Solecki conjectured that it should be possible, after appropriate adjustments of definitions, to develop a similar theory for pseudo-solenoids, and since then related constructions were developed for the pseudo-arc and other universal objects in topology; see e.g. \cite{BartosovaKwiatkowska}, \cite{Kwiatkowska2}, \cite{Kwiatkowska}, \cite{Oppenheim}. In the planar case a potential analogue of Irwin-Solecki result would mean that the pseudo-circle is the unique approximately projectively homogeneous plane separating circle-like continuum, since it is projectively universal in this class \cite{Fe}, \cite{Ro}. This conjecture is stated also in \cite{Kubis}. We show however, without using projective Fra\"iss\'e limits, that this is not the case.
\begin{theorem}\label{nosurhom}
Let $C$ be the pseudo-circle. There exist surjections $f,g:C\to\mathbb{S}^1$ and $\epsilon>0$ such that for no map $h:C\to C$ we have $d(f, g\circ h) < \epsilon$.
\end{theorem}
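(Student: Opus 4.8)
The plan is to extract from the pseudo-circle a $\mathbb{Z}$-valued degree for maps into $\mathbb{S}^1$ and to exploit two of its features: it is multiplicative under composition, and it is determined by homotopy class, which in turn is controlled by uniform distance. The pseudo-circle $C$ is a circle-like, plane-separating, hereditarily indecomposable continuum. Writing $C=\varprojlim(\mathbb{S}^1,p_n)$ with essential (degree $\pm1$) bonding maps, or invoking Alexander duality together with the fact that $C$ is the common boundary of exactly two complementary domains of $S^2$, one obtains $\check H^1(C;\mathbb{Z})\cong\varinjlim\nolimits_n\check H^1(\mathbb{S}^1;\mathbb{Z})\cong\mathbb{Z}$. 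By the classical Bruschlinsky--Eilenberg theorem, for the compact Hausdorff space $C$ the homotopy classes $[C,\mathbb{S}^1]$, with group operation induced by pointwise multiplication in $\mathbb{S}^1\subseteq\mathbb{C}$, are in natural bijection with $\check H^1(C;\mathbb{Z})\cong\mathbb{Z}$. Hence every continuous $\varphi\colon C\to\mathbb{S}^1$ has a well-defined integer degree $\deg\varphi$, and two such maps are homotopic if and only if their degrees agree.

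I would then record three facts. First, a map $C\to\mathbb{S}^1$ of nonzero degree is automatically surjective, since a non-surjective map lands in a proper, hence contractible, arc of $\mathbb{S}^1$, is null-homotopic, and so has degree $0$. Second, I choose $f$ to be the first coordinate projection $\pi_1$, a representative of a generator, so that $\deg f=\pm1$ is odd, and I set $g(z)=\pi_1(z)^2$, so that $\deg g=\pm2$ is even; by the previous remark both are surjective. Third, degree is multiplicative: for any continuous $h\colon C\to C$ the induced endomorphism $h^{*}$ of $\check H^1(C;\mathbb{Z})\cong\mathbb{Z}$ is multiplication by some integer $m_h$, and naturality of the Bruschlinsky--Eilenberg isomorphism under precomposition yields $\deg(g\circ h)=m_h\cdot\deg g=\pm 2m_h$. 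In particular $\deg(g\circ h)$ is even for \emph{every} continuous $h$, while $\deg f$ is odd.

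To finish I would invoke the standard antipodal argument. If some $h$ satisfied $d(f,g\circ h)<2$, then $|f(x)-g(h(x))|<2$ for all $x\in C$, so $f(x)$ and $g(h(x))$ are never antipodal; thus $f\cdot(g\circ h)^{-1}$ omits $-1$, is null-homotopic, and $\deg f=\deg(g\circ h)$. This would force the equality of an odd and an even integer, a contradiction. Taking $\epsilon=2$ therefore produces surjections $f,g\colon C\to\mathbb{S}^1$ with $d(f,g\circ h)\ge 2$ for every map $h$, and I would emphasize that this rules out all continuous $h$, not merely homeomorphisms. The degree bookkeeping and the antipodal/homotopy step are routine; the part demanding care is the cohomological input, namely confirming $\check H^1(C;\mathbb{Z})\cong\mathbb{Z}$ for this hereditarily indecomposable continuum and the naturality of $[C,\mathbb{S}^1]\cong\check H^1(C;\mathbb{Z})$ under precomposition, since it is exactly the parity of the degree --- an invariant of the group $\mathbb{Z}$ rather than of any geometric winding --- that drives the contradiction.
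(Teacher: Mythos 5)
Your proof is correct, but it takes a genuinely different route from the paper's. The paper builds $f$ and $g$ by composing the radial projection $\pi_1\colon C\to\mathbb{S}^1$ with a $2$-fold and a $3$-fold self-covering of the pseudo-circle, whose existence is the Heath--Bellamy theorem (Theorem \ref{HJ}); the incompatibility of degrees ($\deg(g\circ h)=3\deg h\neq 2=\deg f$) is then converted into a uniform gap by lifting to the universal cover of the annulus and comparing the stretching of the lifts, as in the proof of Theorem \ref{notalmost} --- a step the paper explicitly leaves to the reader. You instead take $f=\pi_1$ and $g=\pi_1^2$ (pointwise square in $\mathbb{S}^1\subseteq\mathbb{C}$), so you need no self-covering maps at all, and you convert ``uniformly close implies equal degree'' into the Bruschlinsky--Eilenberg isomorphism $[C,\mathbb{S}^1]\cong\check H^1(C;\mathbb{Z})\cong\mathbb{Z}$ together with the antipodal argument, with parity (odd versus even) forcing the contradiction at $\epsilon=2$. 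What the paper's route buys: it stays inside the covering-space machinery that powers the rest of the paper (Theorems \ref{notalmost}, \ref{degree}, \ref{generic}), where the self-coverings are genuinely needed because the almost-transitivity application requires surjections with a common fixed point. What your route buys: it is more self-contained (no Heath--Bellamy input), the key step that maps of distinct degrees cannot be uniformly close is made fully explicit rather than deferred, and the argument applies verbatim to any continuum $K$ with $\check H^1(K;\mathbb{Z})\cong\mathbb{Z}$ admitting a degree-one map onto $\mathbb{S}^1$ --- in particular to every plane-separating circle-like continuum --- so it shows that the failure of approximate projective homogeneity in this form is not special to the pseudo-circle.
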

Note that for two plane separating circle-like continua, each of them is the intersection of a nested sequence of annuli, and so their first \v Cech cohomology groups are isomorphic to $\mathbb{Z}$. Any map $f$ between the two continua induces a multiplication by a  constant on $\mathbb{Z}$, which we call the \textit{degree}, denoted by $deg(f)$.  As we shall show, the reason the pseudo-circle lacks approximate projective homogeneity is because it $n$-fold covers itself for every positive integer $n$. This is also crucial in the proof of Theorem \ref{notalmost}. Since the set of all self-covering maps is a dense $G_\delta$ in the set of all surjections of the pseudo-circle \cite{BS}, in a sense there is a generic choice of maps that will prevent the pseudo-circle from having the projective homogeneity. In addition, Fearnley (\cite{Fe}, p.510) showed that any plane separating circle-like continuum $K$ admits a degree $1$ surjection from $C$ to $K$. Composing with the self-coverings of the pseudo-circle we get the following two results\footnote{We are grateful to Kazuhiro Kawamura who pointed out that the proofs in this paper along with the result in \cite{Fe} show that Theorem \ref{degree} holds.}
\begin{theorem}\label{degree}
Let $C$ be the pseudo-circle and $K$ a plane separating circle-like continuum. For every positive integer $n$ there exists a degree $n$ surjection $f_n: C\to K$.
\end{theorem}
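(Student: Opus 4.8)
The plan is to obtain $f_n$ by composing Fearnley's degree one surjection with an $n$-fold self-covering of the pseudo-circle, and then to compute the degree using the functoriality of \v{C}ech cohomology. First I would invoke Fearnley's result (\cite{Fe}, p.510): there is a surjection $g\colon C\to K$ with $\deg(g)=1$. Next, using the fact established earlier that the pseudo-circle $n$-fold covers itself, I would fix a self-covering map $h_n\colon C\to C$ of degree $n$ (its existence, and the genericity of self-coverings, being recorded in \cite{BS}); as a covering map onto the connected space $C$, the map $h_n$ is automatically surjective. I then set $f_n=g\circ h_n\colon C\to K$.

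Surjectivity of $f_n$ is immediate, since a composition of surjections is a surjection. It remains to check that $\deg(f_n)=n$. Because the first \v{C}ech cohomology of each of $C$ and $K$ is isomorphic to $\mathbb{Z}$, and the degree of a map is exactly the integer by which the induced homomorphism on $H^1$ multiplies, this reduces to the multiplicativity of degree under composition. That, in turn, follows from the contravariant functoriality of $H^1$: one has $(g\circ h_n)^{*}=h_n^{*}\circ g^{*}$, so if $g^{*}$ is multiplication by $\deg(g)=1$ and $h_n^{*}$ is multiplication by $\deg(h_n)=n$, then $(g\circ h_n)^{*}$ is multiplication by $n$. Hence $\deg(f_n)=\deg(g)\cdot\deg(h_n)=1\cdot n=n$, as required.

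The argument is short precisely because the two substantive ingredients---Fearnley's construction of a degree one surjection onto an arbitrary plane separating circle-like continuum, and the existence of self-coverings of the pseudo-circle of every positive degree---are already at our disposal, the former from \cite{Fe} and the latter from the body of the present paper. The only point demanding care is the well-definedness and multiplicativity of $\deg$, that is, that the induced maps on $H^1\cong\mathbb{Z}$ genuinely behave as claimed; here I would take care to select the self-covering of degree exactly $n$ rather than $-n$, which is arranged by fixing orientations on the generators of the two cohomology groups, and to note that $g^{*}$ acts as the identity on $\mathbb{Z}$ since $\deg(g)=1$. I expect no essential obstacle beyond this bookkeeping.
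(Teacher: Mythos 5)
Your proposal is correct and follows exactly the paper's own argument: compose Fearnley's degree one surjection $C\to K$ with an $n$-fold self-covering of the pseudo-circle and use multiplicativity of degree. The paper states this in two lines, so your additional care about functoriality of \v{C}ech cohomology and orientation conventions is simply a fuller write-up of the same proof.
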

\begin{theorem}\label{generic}
Let $C$ be the pseudo-circle and $K$ a plane separating circle-like continuum. For any surjection $f:C\to K$ with $deg(f)\neq 0$ there exists a map $g:C\to K$ and $\epsilon>0$ such that for no map $h:C\to C$ we have $d(f, g\circ h) < \epsilon$.
\end{theorem}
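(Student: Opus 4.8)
The plan is to reduce Theorem \ref{generic} to two structural facts about the degree — its stability under uniformly small perturbations and its multiplicativity under composition — and then to choose $g$ so that $\deg f$ cannot possibly be a multiple of $\deg g$. The one genuinely technical ingredient is the perturbation-stability of degree; everything else is functoriality of \v Cech cohomology together with a one-line divisibility observation.

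First I would isolate the stability lemma: there is an $\epsilon>0$, depending only on $K$, such that any two maps $\phi,\psi\colon Z\to K$ from a compact space $Z$ with $d(\phi,\psi)<\epsilon$ induce the same homomorphism $\check H^1(K;\mathbb{Z})\to\check H^1(Z;\mathbb{Z})$, and hence the same degree when $Z$ is circle-like. Since $\check H^1(K;\mathbb{Z})\cong\mathbb{Z}$ is the direct limit of the first cohomologies of nerves of finite open covers of $K$, a generator is already carried by one finite open cover $\mathcal{U}_0$; letting $\epsilon$ be a Lebesgue number for $\mathcal{U}_0$ makes any $\epsilon$-close pair $\mathcal{U}_0$-close, so the induced simplicial maps on a common refinement are contiguous and agree on $H^1(N(\mathcal{U}_0))$. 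The point I expect to require the most care is that this $\epsilon$ must be uniform, depending on $K$ alone and not on the particular maps or on the domain $Z$, which is exactly what the Lebesgue-number formulation guarantees.

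It remains to combine this with multiplicativity. For a composite $C\xrightarrow{h}C\xrightarrow{g}K$, functoriality gives $(g\circ h)^{\ast}=h^{\ast}\circ g^{\ast}$ on $\check H^1(\,\cdot\,;\mathbb{Z})$, and reading both sides as multiplication by integers under $\check H^1(C;\mathbb{Z})\cong\check H^1(K;\mathbb{Z})\cong\mathbb{Z}$ yields $\deg(g\circ h)=\deg(g)\,\deg(h)$. Now set $d=|\deg f|+1$; since $\deg f\neq 0$ we have $d\ge 2$ and $d\nmid\deg f$, for $\deg f=d\,m$ forces either $m=0$ and $\deg f=0$ or $|\deg f|=d\,|m|\ge d>|\deg f|$, both impossible. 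By Theorem \ref{degree} choose a degree-$d$ surjection $g=f_d\colon C\to K$ and let $\epsilon$ be the constant of the stability lemma for $K$. If some $h\colon C\to C$ satisfied $d(f,g\circ h)<\epsilon$, then stability would give $\deg(g\circ h)=\deg f$ while multiplicativity gives $\deg(g\circ h)=d\,\deg h$, making $\deg f$ a multiple of $d$ and contradicting $d\nmid\deg f$. Hence no such $h$ exists, and this $g$ together with $\epsilon$ proves the theorem.
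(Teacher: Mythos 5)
Your proposal is correct, and its skeleton coincides with the paper's: the paper also takes $g$ of degree $k$ coprime to $n=\deg(f)$ (its existence coming from Theorem \ref{degree}, exactly as in your use of $f_d$), invokes multiplicativity $\deg(g\circ h)=\deg(g)\deg(h)$, and finishes by the principle that two maps of distinct degrees cannot be uniformly close. Where you genuinely diverge is in how that last, key step is established. The paper does not argue cohomologically: it refers to ``the arguments in the proof of Theorem \ref{nosurhom}'', i.e.\ to an adaptation of the proof of Theorem \ref{notalmost}, which is a covering-space argument --- view the continua inside an annulus $\mathbb{A}$, pass to the universal cover $\hat{\mathbb{A}}$, lift both maps to the connected set $\tau^{-1}(C)$ (connectedness coming from \cite{Bo}), and use the fact that a degree-$m$ map shifts the deck translation by $2\pi m$ together with an intermediate-value argument to locate a point where the two maps are at least $\pi$ apart. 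Your \v{C}ech-theoretic stability lemma (a generator of $\check H^1(K;\mathbb{Z})$ carried by one finite cover $\mathcal{U}_0$, a Lebesgue number $\epsilon$ for $\mathcal{U}_0$, and contiguity of the induced simplicial maps) replaces that geometry entirely: it is intrinsic to $K$, requires no planar embedding, no extension of maps to the ambient annulus, and no analysis of $\tau^{-1}(C)$, and it yields an $\epsilon$ depending on $K$ alone; what it gives up is the explicit quantitative separation (the constant $\pi$) that the annulus argument produces. A minor additional merit of your write-up: the choice $d=|\deg f|+1\geq 2$ with the explicit check $d\nmid\deg f$ removes a small imprecision in the paper, whose bare coprimality hypothesis would formally allow $k=1$, in which case the degree obstruction evaporates (a self-map $h$ of $C$ of degree $n$ exists by Theorem \ref{HJ}, making $\deg(g\circ h)=\deg f$).
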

We conclude with the following problems.
\begin{problem}(cf. \cite{Ra})
Characterize the Hausdorff locally compact spaces $L$, such that $C_0(L,\mathbb{C})$ is almost transitive. What are such metric spaces?
\end{problem}
\begin{problem}
Is there a natural construction of Smith's nonmetric pseudo-arc as a projective Fra\"iss\'e limit?
\end{problem}
Based on the results in \cite{BS} and \cite{KK3} we state the following conjectures, which may also direct potential results for Fra\"iss\'e limits.
\begin{conjecture}
Suppose $C$ is a pseudo-circle and $Y$ is a planar circle-like continuum. Given two surjections $f,g: C \rightarrow Y$ such that $deg(f)=deg(g)$ and any $\epsilon>0$ there exists a local homeomorphism $\varphi$ of $C$ onto itself so that for all $x \in C$:
\begin{eqnarray*}
|f(x)-g(\varphi(x))| & < & \epsilon.
\end{eqnarray*}
\end{conjecture}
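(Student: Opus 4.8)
The plan is to prove the conjecture by developing a projective Fra\"iss\'e theory for the cyclic category, parallel to the treatment of the pseudo-arc in \cite{IS}, in which the degree is the single invariant that an admissible matching must respect. I would realize $C$ as the canonical topological quotient of the projective Fra\"iss\'e limit of the class $\mathcal{C}$ of finite cyclically ordered reflexive graphs $\Gamma_n$, the admissible morphisms being the surjective, cyclic-order preserving edge maps; each such $\Gamma_m\to\Gamma_n$ has a winding number, and in the limit this recovers precisely the \v Cech invariant $\deg$ of the statement. This framing immediately explains why the degree hypothesis is the right one: taking first \v Cech cohomology, $g\circ\varphi$ being $\epsilon$-close to $f$ forces $\deg(g)\cdot\deg(\varphi)=\deg(f)$, so when $\deg(f)=\deg(g)\neq0$ one must have $\deg(\varphi)=1$, and conversely unequal degrees are genuinely obstructed, exactly as in Theorems \ref{nosurhom} and \ref{generic}.

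The first step is a reduction to finite cyclic targets. Since $Y$ is circle-like I would cover it by a cyclic chain of mesh $<\epsilon/3$ with nerve a finite cycle $\Gamma_n$, fix a map $\pi:\Gamma_n\to Y$ realizing the chain, and approximate $f,g$ by epimorphisms $\hat f,\hat g:C\to\Gamma_n$ with $d(f,\pi\circ\hat f),\,d(g,\pi\circ\hat g)<\epsilon/3$. Using the same $\pi$, which may be taken of degree one, the hypothesis $\deg(f)=\deg(g)$ gives $\deg(\hat f)=\deg(\hat g)$; it then suffices to produce a local homeomorphism $\varphi$ of $C$ with $d(\hat f,\hat g\circ\varphi)<\epsilon/3$. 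The whole problem is thereby transferred to matching two equal-degree epimorphisms onto a single finite cycle.

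The heart of the argument is a cyclic projective amalgamation, from which the required $\varphi$ is extracted. I would prove that $\mathcal{C}$ has the projective amalgamation property relative to the winding grading: two degree-$d$ epimorphisms onto $\Gamma_n$ fit into a common degree-$d$ epimorphism from a larger, sufficiently Bing-crooked cycle, the crookedness being the same folding that enforces hereditary indecomposability and that lets one epimorphism be deformed into the other. Running this amalgamation along the Fra\"iss\'e tower produces a self-correspondence of the limit carrying $\hat f$ to $\hat g$ within the prescribed error, whose realization on $C$ is the sought map. Here the degree-one constraint means the construction targets a (near-)homeomorphism; but since $C$ is \emph{not} homogeneous while being almost homogeneous, the phase alignment the amalgamation requires cannot in general be achieved by an honest automorphism and is instead realized only up to $\epsilon$, which is exactly why the conclusion is phrased with a local homeomorphism rather than a homeomorphism. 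The abundance of such maps --- self-coverings are a dense $G_\delta$ among surjections by \cite{BS} --- guarantees enough room to perform these corrections.

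The main obstacle is the cyclic amalgamation itself together with its quantitative coupling to almost-homogeneity. Unlike the linear amalgamation underlying \cite{IS}, here the amalgam must close up into a single cycle of the prescribed winding rather than tearing into an arc, and one must verify that $\deg$ is an honest invariant of the amalgamation rather than a quantity that drifts under the foldings; this global circular constraint is the genuinely new combinatorial difficulty. Equally delicate is showing that the failure of point-homogeneity of $C$ is absorbed within the allotted $\epsilon$: one must align the ``phases'' of $\hat f$ and $\hat g$ using almost-homogeneity uniformly along the tower, while keeping the accumulated error below $\epsilon/3$. If both the degree-graded amalgamation and this uniform almost-homogeneous phase control can be established, the remaining bookkeeping is inherited directly from the pseudo-arc case \cite{IS}.
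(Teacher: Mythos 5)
You are attempting to prove a statement that the paper itself presents as an open \emph{conjecture}: it is stated at the end of the introduction (``Based on the results in \cite{BS} and \cite{KK3} we state the following conjectures\dots''), and the paper contains no proof of it, so there is nothing to compare your argument against except the question of whether it actually closes the problem. It does not. What you have written is a research program, not a proof: the two pivotal steps --- the degree-graded cyclic projective amalgamation (``I would prove that $\mathcal{C}$ has the projective amalgamation property relative to the winding grading\dots'') and the uniform ``phase control'' absorbing the failure of homogeneity within the allotted $\epsilon$ --- are asserted, not established, and you say so yourself in your final paragraph. Those two steps are precisely the open content of the conjecture; the reduction to a finite cyclic target and the degree bookkeeping via \v Cech cohomology are the routine parts.

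There is moreover a structural reason the plan cannot succeed as written. In the projective Fra\"iss\'e framework of \cite{IS}, the limit is projectively homogeneous with respect to genuine homeomorphisms: if your class of cyclically ordered graphs with degree-one epimorphisms had the projective amalgamation property and the quotient of its limit were the pseudo-circle, the pseudo-circle would satisfy the Irwin--Solecki conclusion with $\varphi$ a homeomorphism --- but Theorems \ref{nosurhom} and \ref{generic} of this paper show exactly that this fails. So either amalgamation fails for your cyclic class, or its limit is not the pseudo-circle (allowing arbitrary winding in the amalgams drives the limit toward a pseudo-solenoid rather than the pseudo-circle). Your proposed remedy --- that the machinery should output a \emph{local} homeomorphism ``realized only up to $\epsilon$'' --- has no mechanism in the framework you invoke: Fra\"iss\'e homogeneity produces isomorphisms of the limit structure, and no step of your outline explains how a covering-like, non-injective $\varphi$ is extracted from an amalgamation diagram. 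Note also that your own cohomology computation forces $\deg\varphi=1$ whenever $\deg f=\deg g\neq 0$, so the proposal must in particular produce degree-one local homeomorphisms of $C$ that are not homeomorphisms and explain why they provide the needed flexibility; this is exactly the content of \cite{BS} and \cite{KK3} on which the conjecture is modeled, and your sketch never engages it. As it stands, the proposal restates the conjecture in Fra\"iss\'e language --- a reasonable program, and essentially the one the paper's Problem on Fra\"iss\'e constructions gestures at --- but it is not a proof.
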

\begin{conjecture}
Suppose $C$ is a pseudo-solenoid and $Y$ is a circle-like continuum with the same first \v Cech cohomology groups. Given two shape equivalences $f,g: C \rightarrow Y$ such that $deg(f)=deg(g)$ and any $\epsilon>0$ there exists a homeomorphism $\varphi$ of $C$ onto itself so that for all $x \in C$:
\begin{eqnarray*}
|f(x)-g(\varphi(x))| & < & \epsilon.
\end{eqnarray*}
\end{conjecture}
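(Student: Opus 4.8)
The plan is to mirror the Irwin--Solecki argument for the pseudo-arc, replacing finite linear graphs (chains) by finite cyclic graphs and carrying the winding number as an extra invariant whose preservation is, by Theorems \ref{nosurhom}--\ref{generic}, forced. First I would realize the pseudo-solenoid $C$ as an inverse limit $C=\varprojlim(\mathbb{S}^1,\sigma_i)$ of circles whose bonding maps $\sigma_i$ are essential and Bing-crooked, the winding numbers $\deg(\sigma_i)$ being chosen so that their telescoped products reproduce the prescribed first \v Cech cohomology $\check H^1(C)$, a subgroup of $\mathbb{Q}$. The hypothesis that $Y$ has the same first \v Cech cohomology, together with the fact that $f$ and $g$ are shape equivalences, should force $Y$ to be an inverse limit $Y=\varprojlim(\mathbb{S}^1,\tau_j)$ of circles of the \emph{same} type, with a cofinally equivalent sequence of winding numbers; this is the step that converts the cohomological hypothesis into a combinatorial compatibility of the two inverse systems.

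Next I would approximate the two shape equivalences at finite stages. Because $C$ and $Y$ are circle-like, for every $\delta>0$ the maps $f$ and $g$ factor, up to $\delta$, through projections to finite circle stages, yielding circle maps $f_i,g_i:\mathbb{S}^1\to\mathbb{S}^1$ with $\deg(f_i)=\deg(g_i)$, this equality being exactly the descent of the hypothesis $\deg(f)=\deg(g)$ through the telescope. Two self-maps of the circle of equal degree are homotopic, and the whole problem is to absorb this homotopy into a self-homeomorphism of $C$. The heart of the matter is therefore a \emph{crooked amalgamation lemma for cycles}: given $\delta>0$ and two essential Bing-crooked epimorphisms $\phi,\psi:G\to H$ of finite cyclic graphs with $\deg(\phi)=\deg(\psi)$, there are a finer finite cyclic graph $G'$ and crooked epimorphisms $\alpha,\beta:G'\to G$ of winding number $1$ so that $\phi\circ\alpha$ and $\psi\circ\beta$ agree to within $\delta$. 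This is the cyclic analogue of the amalgamation that exhibits the pseudo-arc as a projective Fra\"iss\'e limit, and the degree condition is precisely what should make the amalgamation square solvable.

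With such a lemma in hand, I would run the standard back-and-forth construction: interleaving applications of the amalgamation to $f_i$ against $g_i$ at successive stages produces a coherent sequence of finite crooked identifications whose inverse limit is a homeomorphism $\varphi:C\to C$, necessarily orientation-preserving (so that $\deg(g\circ\varphi)=\deg(g)=\deg(f)$, consistent with closeness), and a telescoping estimate then gives $|f(x)-g(\varphi(x))|<\epsilon$ for all $x\in C$ once $\delta$ is taken small enough relative to $\epsilon$ and to the moduli of continuity of the projections. The self-covering maps of the pseudo-solenoid, which should exist by the circle-like counterpart of \cite{BS}, supply the abundance of degree-$n$ epimorphisms needed to seed the amalgamation at each prescribed winding number.

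I expect the main obstacle to be the crooked amalgamation lemma itself, and within it the interaction between Bing-crookedness and the winding number. In the linear (pseudo-arc) case any two epimorphisms of finite chains amalgamate because a chain carries no winding obstruction, so crookedness can be imposed freely; on a cycle the winding number is a genuine invariant, and one must build the refining cycle $G'$ so that it folds crookedly onto $G$ while the composites $\phi\circ\alpha$ and $\psi\circ\beta$ are forced together without disturbing the prescribed degree. Showing that the winding number is the \emph{only} obstruction---equivalently, that equal-degree crooked cyclic epimorphisms are always $\epsilon$-amalgamable---is the decisive and, I believe, genuinely delicate point; the non-existence results of Theorems \ref{nosurhom} and \ref{generic} confirm that the degree hypothesis cannot be dropped, so any correct proof must consume it precisely here.
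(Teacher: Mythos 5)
You have attempted to prove a statement that the paper does not prove: it is posed explicitly as a \emph{conjecture} (motivated, the authors say, by \cite{BS} and \cite{KK3}), so there is no proof in the paper to compare yours against, and a complete argument here would be a new result. Judged on its own terms, your proposal is a program rather than a proof. Its entire weight rests on the ``crooked amalgamation lemma for cycles,'' which you state but do not prove; that lemma is essentially the open content of the conjecture rewritten in projective Fra\"iss\'e language, and you concede as much in your final paragraph. It is also not a routine transplant of the chain case: on a cycle, Bing-style crooked folding is exactly the mechanism that destroys winding, so crookedness and the preservation of degree pull against each other, and the paper's own negative results (Theorems \ref{nosurhom} and \ref{generic}, built on the self-coverings of \cite{BS}) show that this tension is genuine. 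Declaring that equal degree is the \emph{only} obstruction is the conjecture itself, not a step toward it.

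Beyond the missing lemma, two structural defects in your set-up would need repair even if the lemma were granted. First, $Y$ is only assumed to be a circle-like continuum with the same first \v Cech cohomology; it need not be hereditarily indecomposable --- for instance, when $C$ is the pseudo-circle one may take $Y=\mathbb{S}^1$ itself, since a plane-separating circle-like continuum has the shape of the circle and the radial projection is a shape equivalence. Such a $Y$ admits no crooked inverse system, so your symmetric amalgamation of \emph{two} crooked epimorphisms is the wrong shape of lemma; as in Irwin--Solecki \cite{IS}, where the target is an arbitrary chainable continuum, crookedness can be imposed only on the $C$-side of the square. Second, your refining epimorphisms $\alpha,\beta\colon G'\to G$ of winding number $1$ are appropriate only to the pseudo-circle: for a general pseudo-solenoid the refinements must carry the degrees prescribed by the supernatural number attached to $\check H^1(C)$ as a subgroup of $\mathbb{Q}$, and you would further need to check that the degree of a shape equivalence acts as a unit on that subgroup and that the back-and-forth respects this cofinally. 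None of this refutes the strategy --- it is a plausible route, consistent with the paper's suggestion that such results ``may also direct potential results for Fra\"iss\'e limits'' --- but as written you have located the difficulty, not resolved it.
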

Our paper is organized as follows. Section 2 contains preliminaries on almost transitivity, continuum theory, and pseudo-arcs. In Section 3 we prove that $C_0(\tilde{M},\mathbb{C})$ is almost transitive, where $\tilde{M}$ is Smith's nonmetric pseudo-arc $M$ with a point removed. In Section 4 we prove first uncountability and separability of $M$ (and therefore $\tilde{M}$).  In Section 5 we show that $C_0(\tilde{M},\mathbb{C})$ is not separable and $M$ contains convergent infinite sequences. In Section 6 we prove Theorem \ref{notalmost}. Finally, in Section 7 we prove Theorem \ref{nosurhom} and Theorem \ref{generic}, relating our result on the pseudo-circle and Wood's Conjecture to the work of Irwin and Solecki.
\section{Preliminaries}
Let $L$ be a locally compact Hausdorff space. Following \cite{Ra} (see also \cite{GR}), we say that $C_0(L,\mathbb{C})$ is \textit{almost positive transitive} if for any $\epsilon>0$ and $f,g\in C_0(L,\mathbb{C})$ with $||f||=||g||=1$ and such that $f,g\geq 0$ there is an isometry $T:C_0(L,\mathbb{C})\to C_0(L,\mathbb{C})$ with $||Tf-g||<\epsilon$. We say that $C_0(L,\mathbb{C})$ admits \textit{almost polar decompositions} if for every $\epsilon>0$ and every $f\in C_0(L,\mathbb{C})$ there is an isometry $T$ such that $||T|f| -f||<\epsilon$. It is easy to see that $C_0(L,\mathbb{C})$ is almost transitive if and only if it is almost positive transitive and allows almost polar decompositions. As a consequence of the Banach-Stone Theorem in order to study almost positive transitivity one can only deal with isometries $T$ of the form $T(f)=f(h)$, where $h:L\to L$ is a homeomorphism (see e.g.\cite{Ra}).
\begin{lemma}[Rambla]\label{polar} Suppose $L$ is a locally compact Hausdorff space with covering dimension at most $1$. Then $C_0(L,\mathbb{C})$ admits almost polar decompositions.
\end{lemma}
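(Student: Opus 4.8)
The plan is to realise the isometries as multiplication operators and thereby reduce the statement to a purely topological extension problem. By the Banach--Stone theorem every surjective linear isometry of $C_0(L,\mathbb{C})$ has the form $g\mapsto u\cdot(g\circ h)$, with $h$ a homeomorphism of $L$ and $u\colon L\to\mathbb{S}^1$ continuous, where $\mathbb{S}^1=\{z\in\mathbb{C}:|z|=1\}$; I shall use only the case $h=\operatorname{id}$, so that $T_u g:=u\cdot g$. Since $|u|\equiv1$, such an operator maps $C_0(L,\mathbb{C})$ isometrically onto itself (with inverse $T_{\overline u}$). Hence, given $f\in C_0(L,\mathbb{C})$ and $\epsilon>0$, it is enough to produce a continuous $u\colon L\to\mathbb{S}^1$ with $\|\,u\,|f|-f\,\|<\epsilon$.

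First I would isolate the phase of $f$. On the open set $U=\{x\in L:f(x)\neq0\}$ the assignment $\phi(x)=f(x)/|f(x)|$ is continuous and takes values in $\mathbb{S}^1$. Put $K=\{x\in L:|f(x)|\ge\epsilon/2\}$, which is compact because $f$ vanishes at infinity; then $\phi|_K\colon K\to\mathbb{S}^1$ is a continuous circle-valued map on a compact, hence closed, set. The key observation is that any continuous unimodular $u\colon L\to\mathbb{S}^1$ extending $\phi|_K$ already does the job: on $K$ one has $u|f|-f=\phi|f|-f=0$, while off $K$ one has $|f|<\epsilon/2$ and therefore the pointwise bound $|u|f|-f|\le|u|\,|f|+|f|=2|f|<\epsilon$. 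Taking the supremum gives $\|u|f|-f\|\le\epsilon$.

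Everything thus reduces to extending a continuous $\mathbb{S}^1$-valued map off the closed set $K$, and this is exactly where the hypothesis $\dim L\le1$ is used, via the Alexandroff characterisation of covering dimension: in a normal space $X$ with $\dim X\le1$, every continuous map from a closed subset into $\mathbb{S}^1$ extends over $X$. I would apply this on the one-point compactification $\alpha L$, which is compact Hausdorff and therefore normal; viewing $K$ as a closed subset of $\alpha L$ and extending $\phi|_K$ produces $u\colon\alpha L\to\mathbb{S}^1$ with $u|_K=\phi|_K$, and its restriction to $L$ is the required unimodular function. Conceptually, a circle-valued map on a closed set carries a single obstruction to extension, living in the integral second cohomology of the pair, and dimension at most one forces that obstruction to vanish.

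The step deserving the most care---and the main obstacle---is the dimension-theoretic bookkeeping that lets the extension theorem apply: one must know that passing to $\alpha L$ does not raise the covering dimension, i.e. $\dim\alpha L=\dim L\le1$ for the locally compact Hausdorff space $L$. In the situations of interest this is automatic, since $\alpha L$ is then the genuine pseudo-arc or its nonmetric analogue, a one-dimensional continuum. If one prefers to stay inside $L$, the alternative is to extend $\phi|_K$ first over a compact neighbourhood $\overline V\supseteq K$ with $\dim\overline V\le1$ and then to globalise, the delicate point being to do so without leaving $\mathbb{S}^1$; the cleanest way to avoid an ad hoc taper is precisely the global extension over $\alpha L$ just described. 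Once $u$ is constructed the norm estimate above is immediate, completing the proof.
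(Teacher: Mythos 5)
Your proposal cannot be compared with a proof in the paper, because the paper gives none: Lemma \ref{polar} is quoted from Rambla \cite{Ra} as a known result. Measured against the standard argument (which is essentially Rambla's), your strategy is the right one and most of it is sound: multiplication by a continuous unimodular function $u$ is a surjective linear isometry of $C_0(L,\mathbb{C})$ (you do not even need Banach--Stone for that direction), the phase $\phi=f/|f|$ is continuous on the compact set $K=\{|f|\ge\epsilon/2\}$, and any continuous unimodular extension of $\phi|_K$ to $L$ gives the desired estimate. (One cosmetic point: your final bound is $\le\epsilon$ rather than $<\epsilon$; fix this by running the argument with $\epsilon/3$, or by noting that $|u|f|-f|$ vanishes at infinity, hence attains its supremum, and that supremum is either $0$ or is attained off $K$, where the value is strictly less than $\epsilon$.)

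The genuine gap is precisely the step you flagged and then waved away: the applicability of the Alexandroff extension theorem. You cannot apply it on $L$ itself, because a locally compact Hausdorff space need not be normal. You cannot apply it on $\alpha L$ without knowing $\dim\alpha L\le 1$, and preservation of covering dimension under one-point compactification is not a standard fact (for the finite-open-cover definition on possibly non-normal spaces it requires a genuine argument, and the naive attempt to splice an order-$2$ refinement on a compact neighborhood with a shrunken neighborhood of $\infty$ produces order $3$ along the seam). Your fallback---``in the situations of interest $\alpha L$ is the pseudo-arc or its nonmetric analogue''---proves only the special case used in Theorem \ref{M}, where $\alpha L=M$ and $\dim M\le 1$ by Lemma \ref{Katuta}; it is not a proof of the lemma as stated for arbitrary $L$. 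The clean repair is to compactify differently: pass to the \v{C}ech--Stone compactification $\beta L$. The set $K$, being compact, is closed in $\beta L$; the space $\beta L$ is compact Hausdorff, hence normal; and $\dim\beta L=\dim L\le 1$ is exactly the classical theorem one has here (for normal $L$ it is \v{C}ech's theorem, and for general Tychonoff $L$ the covering dimension is usually defined via finite cozero covers precisely so that this equality holds). Alexandroff's theorem then extends $\phi|_K$ to a map $u:\beta L\to\mathbb{S}^1$, and the restriction $u|_L$ is the required unimodular multiplier. With this single substitution ($\beta L$ in place of $\alpha L$) your argument becomes a complete and fully general proof.
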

As a consequence of the lemma above for a locally compact Hausdorff space $L$ of covering dimension $1$, in order to prove almost transitivity of $C_0(L,\mathbb{C})$ it is enough to show that it is almost positive transitive.
\begin{theorem}[Kawamura \cite{KK2}, Rambla \cite{Ra}]  Suppose $X$ is a pseudo-arc and $I=[0,1]$ so that $f,g: X \rightarrow I$ are two onto functions and there is a point $P$ so that $f(P) = g(P)=0$. Then if $\epsilon>0$ there exists a homeomorphism $\varphi$ of $X$ onto itself so that $\varphi(P)=P$ and for all $x \in X$:
\begin{eqnarray*}
|f(x)-g(\varphi(x))| & < & \epsilon.
\end{eqnarray*}
In particular, for any $p\in X$ and $L=X\setminus\{p\}$ the space $C_0(L,\mathbb{C})$ is almost positive transitive and hence almost transitive.
\end{theorem}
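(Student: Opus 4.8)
The plan is to deduce the ``in particular'' clause from the main approximation statement together with Lemma~\ref{polar}, and to prove the approximation statement itself by a crooked-chain back-and-forth construction. For the reduction, take nonnegative $f,g\in C_0(L,\mathbb{C})$ with $\|f\|=\|g\|=1$ and extend them to $X$ by assigning the value $0$ at $p$; since they vanish at infinity the extensions $\bar f,\bar g\colon X\to[0,1]$ are continuous, attain both $0$ (at $p$) and $1$, and hence, $X$ being connected, map onto $I=[0,1]$ with $\bar f(p)=\bar g(p)=0$. Applying the main statement to the pair $(\bar g,\bar f)$ yields a homeomorphism $\varphi$ of $X$ fixing $p$ with $|\bar g-\bar f\circ\varphi|<\epsilon$; then $T\colon u\mapsto u\circ\varphi$ is a surjective linear isometry of $C_0(L,\mathbb{C})$ with $\|Tf-g\|<\epsilon$, which is exactly almost positive transitivity. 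Since $\dim X=1$, Lemma~\ref{polar} supplies almost polar decompositions, so almost positive transitivity upgrades to almost transitivity as recorded in the preliminaries.

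For the main statement I would first replace $f$ and $g$ by finitely determined approximations. Writing the pseudo-arc as an inverse limit $X=\varprojlim(I,p_k)$ of arcs with crooked bonding maps, and using that $I$ is an absolute retract, the standard fact that maps from an inverse limit into an ANR factor approximately through a projection gives, for $k$ large enough, maps $a,b\colon I\to I$ with $|f-a\circ\pi_k|<\epsilon/4$ and $|g-b\circ\pi_k|<\epsilon/4$, where $\pi_k\colon X\to I$ is an $\epsilon$-map carrying the crooked chain structure, with the base point $P$ in the first link. Equivalently, both $f$ and $g$ induce ``patterns'' — the linear decompositions of $X$ into the small closed sets $f^{-1}(J_i)$ and $g^{-1}(J_i)$ over a partition of $I$ into subintervals $J_i$ of length $<\epsilon$ — anchored at $P$ by the condition $f(P)=g(P)=0$, and the goal becomes a self-homeomorphism fixing $P$ that carries one pattern onto the other up to a single link.

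The construction of $\varphi$ is then a nested back-and-forth. I would build a uniformly convergent sequence of homeomorphisms $\varphi_n$ by matching successively finer crooked chains of $X$: at stage $n$ one has a correspondence between the links of an $f$-adapted chain and those of a $g$-adapted chain that preserves the base link, and one refines it to stage $n+1$ using the crookedness of the defining chains, which guarantees that any crooked refinement of one chain can be threaded through the corresponding refinement of the other while respecting the link correspondence. Controlling the mesh at each stage forces $(\varphi_n)$ and $(\varphi_n^{-1})$ to be uniformly Cauchy, so the limit $\varphi$ is a homeomorphism; keeping the base link fixed at every stage yields $\varphi(P)=P$, and the stage mesh being $<\epsilon$ yields $|f-g\circ\varphi|<\epsilon$.

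The hard part will be the crooked matching lemma powering the inductive step: one must show that for the pseudo-arc any two crooked chain patterns sharing a common end link admit a link-preserving homeomorphic matching, and that these matchings can be made compatible across scales, so that the limit is genuinely bijective and bicontinuous rather than merely a near-homeomorphism. This is precisely where hereditary indecomposability — encoded as the crookedness of the defining chains — is indispensable, since it is what allows the folds of the $g$-pattern to be absorbed into the $f$-pattern; the remaining bookkeeping, namely preserving $P$ and keeping the stagewise errors summable, is routine once the matching lemma is in hand.
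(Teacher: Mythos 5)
The paper does not prove this statement at all: it is imported in the preliminaries as a known theorem of Kawamura \cite{KK2} and Rambla \cite{Ra}, and is then used as a black box in Section 3 (applied at the countable stage $X_\gamma$ and lifted through Lewis's theorems), so there is no internal proof to compare against and your attempt must stand on its own. Its first half does stand: your deduction of the ``in particular'' clause is correct and is exactly the reduction the paper sets up in Section 2 --- extending nonnegative, norm-one elements of $C_0(L,\mathbb{C})$ by $0$ at $p$ gives continuous surjections $X\to[0,1]$ (continuity on the one-point compactification plus connectedness of $X$), the basepoint-preserving homeomorphism $\varphi$ restricts to a homeomorphism of $L$ and induces the surjective isometry $u\mapsto u\circ\varphi$, and Lemma~\ref{polar} together with $\dim L\le 1$ upgrades almost positive transitivity to almost transitivity.

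The second half, however, has a genuine gap: your ``crooked matching lemma'' is not an auxiliary step but is the theorem itself in discretized form, and it is precisely what the proofs of Kawamura, Rambla, and (in greater generality) Irwin--Solecki \cite{IS} spend all their effort establishing; deferring it and declaring the rest routine leaves the statement unproven. Two places where the real work is hidden: (i) the pattern sets $f^{-1}(J_i)$ and $g^{-1}(J_i)$ are small only in image --- in the metric of $X$ they are typically very large --- so a link correspondence between the two patterns gives no control whatsoever on the mesh of your $\varphi_n$; to make $(\varphi_n)$ and $(\varphi_n^{-1})$ uniformly Cauchy you must interleave the pattern chains with genuinely fine crooked chains of $X$ and prove a Bing-type amalgamation statement that refines both patterns simultaneously while keeping the correspondence, and this is the heart of the matter; (ii) the claim that ``any crooked refinement of one chain can be threaded through the corresponding refinement of the other while respecting the link correspondence'' is a substantial combinatorial assertion about crooked patterns (it is where hereditary indecomposability actually enters, via Bing-style arguments), and it must be proved in a form that preserves the distinguished end link at every stage, so that $\varphi(P)=P$ survives to the limit --- this anchoring is exactly where the hypothesis $f(P)=g(P)=0$ is consumed, and nothing in the proposal establishes it. In short, your outline follows the standard chain-theoretic route and could in principle be completed, but as written the core of the proof is missing.
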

A \textit{continuum} is a compact and connected space. A continuum is called \textit{chainable} (\textit{circle-like}) if it is the inverse limit of spaces homeomorphic to $[0,1]$ (homeomorphic to $\mathbb{S}^1$). Every chainable (circle-like) continuum in $\mathbb{R}^2$ is a continuous image of the pseudo-arc \cite{Mi} (of the pseudo-circle \cite{Fe}, \cite{Ro}), and all chainable continua can be embedded in $\mathbb{R}^2$.
Now we recall some background theorems from continuum theory by Lewis \cite{WL}.
\begin{theorem}[Lewis] \label{WL1} Suppose that $M$ is a one-dimensional continuum.  Then there exists a one-dimensional continuum $\hat M$ and a continuous decomposition $G$ of $\hat M$ into pseudo-arcs so that the decomposition space $\hat M/G$ is homeomorphic to $M$.  Furthermore if $\pi: \hat M \rightarrow \hat M/G$ is the mapping so that $\pi(x)$ is the unique element of $G$ containing $x$ and $h: \hat M/G \rightarrow \hat M/G$ is a homeomorphism then there exists a homeomorphism $\hat h: \hat M \rightarrow \hat M$ so that $\pi \circ \hat h = h \circ \pi$.
\end{theorem}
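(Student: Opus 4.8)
The plan is to realize $M$ as an inverse limit of finite graphs, inflate each graph into a polyhedron carrying an arc in the ``fiber'' direction, and then take the inverse limit of these inflations with bonding maps that are crooked along the fibers, so that in the limit every fiber becomes a pseudo-arc while the total space collapses to dimension one. Concretely, since $M$ is a one-dimensional continuum, write $M=\varprojlim(M_n,g_n)$ with each $M_n$ a finite graph and each $g_n\colon M_{n+1}\to M_n$ a surjection. For each $n$ I would fix a polyhedral thickening $\hat M_n$ of $M_n$ (a regular neighborhood, whose fibers over $M_n$ are arcs) with projection $\pi_n\colon\hat M_n\to M_n$, and build surjections $\hat g_n\colon\hat M_{n+1}\to\hat M_n$ satisfying $g_n\circ\pi_{n+1}=\pi_n\circ\hat g_n$, chosen so that, restricted to the fibers, the $\hat g_n$ are increasingly crooked in the sense of Bing, performed uniformly over each edge and matched consistently at the vertices. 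Setting $\hat M=\varprojlim(\hat M_n,\hat g_n)$ and $\pi=\varprojlim\pi_n\colon\hat M\to M$ gives the candidate space and projection, and $G=\{\pi^{-1}(x):x\in M\}$ the candidate decomposition.

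For the first assertion, note that for each $x\in M$ the fiber $\pi^{-1}(x)=\varprojlim\bigl(\pi_n^{-1}(x_n),\hat g_n|\bigr)$ is an inverse limit of arcs under crooked bonding maps, hence a pseudo-arc by Bing's characterization of the pseudo-arc as the hereditarily indecomposable chainable continuum. Upper semicontinuity of $G$ is immediate from compactness; lower semicontinuity, and hence continuity of the decomposition in the Hausdorff metric, follows from the uniformity of the crookedness estimates, which force neighboring fibers to stay close. The map $\pi$ is a closed surjection whose point-inverses are exactly the elements of $G$, so it descends to a homeomorphism $\hat M/G\to M$. One-dimensionality of $\hat M$ is \emph{not} automatic from that of the $M_n$, since the thickenings are two-dimensional; rather it is forced by the crookedness of the $\hat g_n$, which collapses the fiber directions in the limit, and verifying $\dim\hat M=1$ is one of the points requiring care.

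For the lifting clause, given a homeomorphism $h\colon M\to M$ I would obtain $\hat h$ from the naturality of the construction. After reindexing the inverse system, both $h$ and $h^{-1}$ are approximated by level maps between the graphs $M_n$; because the crooked bonding maps $\hat g_n$ depend only on the local edge structure, they are compatible with such level maps, which lift to maps $\hat h_n\colon\hat M_n\to\hat M_n$ satisfying $\pi_n\circ\hat h_n=h_n\circ\pi_n$. Passing to the limit yields $\hat h\colon\hat M\to\hat M$ with $\pi\circ\hat h=h\circ\pi$; running the same argument for $h^{-1}$ and checking that the lifts compose to the identity shows $\hat h$ is a homeomorphism. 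The homogeneity of the pseudo-arc supplies the slack needed to adjust the lift continuously along each fiber.

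The technical heart is twofold. First, Bing's crooked-map construction must be carried out not on a single arc but simultaneously over every edge of the graphs, matched consistently at the vertices, so that \emph{every} fiber is a genuine pseudo-arc, the decomposition is continuous, and, crucially, the limit is one-dimensional; this amalgamation, with its uniform crookedness bookkeeping, is the delicate part. Second, the lifting of an arbitrary base homeomorphism is subtle precisely because homeomorphisms of $M$ need not be level-preserving, so the argument leans essentially on the naturality of the construction together with the homogeneity of the pseudo-arc, and it is here that I expect the greatest difficulty.
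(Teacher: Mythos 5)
The first thing to note is that the paper contains no proof of this statement to compare against: Theorem \ref{WL1} is quoted as a background result from Lewis's paper \cite{WL} (``Continuous curves of pseudo-arcs''), so your proposal has to be measured against Lewis's original construction. In outline, your plan (approximate $M$ by graphs, thicken each graph to an arc-fibered polyhedron, and take an inverse limit with increasingly crooked fiberwise bonding maps, generalizing the Bing--Jones circle of pseudo-arcs) is the right family of ideas and is broadly the skeleton of the known construction.

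However, the proposal leaves genuine gaps at exactly the places where the theorem is hard. Most seriously, the lifting clause. Your argument --- approximate $h$ and $h^{-1}$ by level maps between the $M_n$, lift those to the thickenings, pass to the limit --- does not work as stated: an arbitrary homeomorphism of $M$ only \emph{approximately} commutes with the bonding maps after reindexing, so the lifted maps $\hat h_n$ only approximately commute with the $\hat g_n$; a system of approximately commuting squares does not induce a map on the inverse limit, and even if one repairs this with a diagonal argument in the style of Mioduszewski \cite{Mi}, the output satisfies $\pi\circ\hat h\approx h\circ\pi$ rather than the exact identity $\pi\circ\hat h = h\circ\pi$ demanded by the theorem, and there is no reason it is injective. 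Obtaining an exact homeomorphism lift is the real content of Lewis's result, and it requires building strong uniformity/absorption properties into the crooked bonding maps (the same phenomenon underlying the uniqueness and homogeneity of the pseudo-arc); invoking ``naturality of the construction'' and ``homogeneity of the pseudo-arc'' does not substitute for that work. Second, the dimension claim: your stated mechanism, that crookedness ``collapses the fiber directions in the limit,'' is wrong --- the fibers persist as nondegenerate pseudo-arcs --- and since the thickenings $\hat M_n$ are two-dimensional, the inverse limit bound gives only $\dim\hat M\le 2$; that $\dim\hat M=1$ needs an actual argument, which you flag but do not supply. Third, the continuity (in particular lower semicontinuity) of $G$ is asserted from ``uniform crookedness estimates'' that are never formulated. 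As it stands, the three load-bearing assertions --- every fiber is a pseudo-arc and the decomposition is continuous, $\dim\hat M=1$, and exact homeomorphism lifting --- remain at the level of intentions.
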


Thus we have an open map $\pi$ so that the following diagram commutes:
\begin{center}
\begin{displaymath}
\xymatrix{
\hat M \ar@{<-}[r]^{\hat h }\ar@{->}[d]^{\pi}& \hat M \ar@{->}[d]^{\pi}\\
\hat M / G = M \ar@{<-}[r]^{h}& M = \hat M / G
}
\end{displaymath}
\end{center}

\begin{theorem}[Lewis]\label{WL2} Under the hypothesis of Theorem \ref{WL1},  if for some element $g \in G$ we have $x, y \in g$ then there exists a homeomorphism $\hat h$ so that $\hat h(x) = y$ and $\pi \circ \hat h = \pi$.
\end{theorem}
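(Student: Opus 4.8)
The plan is to reduce Theorem \ref{WL2} to a statement about a single fiber, using the homogeneity of the pseudo-arc, and then to upgrade the resulting self-homeomorphism of that fiber to a fiber-preserving self-homeomorphism of $\hat M$ by re-running the inverse-limit construction that produced $\hat M$ in Theorem \ref{WL1}. Since $g=\pi^{-1}(\pi(x))$ is a pseudo-arc and $x,y\in g$, the homogeneity of the pseudo-arc furnishes a homeomorphism $\psi:g\to g$ with $\psi(x)=y$; the content of the theorem is that such a $\psi$ can be chosen to extend to a homeomorphism $\hat h$ of $\hat M$ fixing every fiber as a set, equivalently satisfying $\pi\circ\hat h=\pi$. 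Note that Theorem \ref{WL1} alone does not suffice: it only lifts homeomorphisms of the base $M$, and to remain inside a single fiber one must take $h=\mathrm{id}_M$, for which the identity is already a (useless) lift. The extra input must therefore be the crookedness built into the bonding maps of $\hat M$.

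Recall that $\hat M$ is realized as an inverse limit $\hat M=\varprojlim(\hat M_i,\hat p_i)$ of finite graphs, together with a compatible family of maps $q_i:\hat M_i\to M_i$ onto the graphs $M_i$ whose inverse limit is $M$, so that $\pi$ is the limit of the $q_i$ and each fiber is the inverse limit of the arcs $q_i^{-1}(b_i)$ under crooked bonding maps. Writing $b=\pi(x)=\pi(y)=(b_i)$, $x=(x_i)$ and $y=(y_i)$ with $x_i,y_i\in q_i^{-1}(b_i)$, I would build $\hat h$ level by level. At stage $i$ one constructs a homeomorphism $\hat h_i:\hat M_i\to\hat M_i$ that (i) preserves the $q_i$-fibers, i.e. $q_i\circ\hat h_i=q_i$, (ii) is the identity outside a small neighborhood of the fiber over $b_i$, and (iii) carries $x_i$ to $y_i$ inside the arc $q_i^{-1}(b_i)$; since this fiber is an arc there is ample room to realize such a motion while respecting the graph structure. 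The problem is then to choose the $\hat h_i$ compatibly with the bonding maps $\hat p_i$ so that $\hat h=\varprojlim\hat h_i$ is well defined.

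Exact compatibility at every finite level is typically too rigid, so I would instead use the standard Bing inverse-limit technique: pass to a subsequence of levels, allow the $\hat h_i$ to be only approximately compatible, and exploit the crookedness of the bonding maps to absorb the error, so that the sequence of partial compositions converges to a genuine homeomorphism of $\hat M$. The same crookedness that forces the fibers to be pseudo-arcs guarantees that small discrepancies introduced at a high level are invisible at lower levels, which is precisely what permits the limit map to satisfy $\pi\circ\hat h=\pi$ exactly and $\hat h(x)=y$ exactly. This last point is the main obstacle: one must control the construction carefully enough that the approximate fiber motions accumulate to the precise equality $\hat h(x)=y$ while the fiber-preservation $q_i\circ\hat h_i=q_i$ survives the passage to the limit. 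This is the mechanism underlying Bing's proof of the homogeneity of the pseudo-arc, now carried out in a parametrized family over $M$; the continuity of the decomposition $G$ is exactly the hypothesis that makes the parametrization uniform enough for the error-absorption to go through.
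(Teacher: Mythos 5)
This statement is one the paper never proves: Theorem \ref{WL2} is imported verbatim as a background result from Lewis \cite{WL}, so there is no in-paper argument to compare against, and your proposal has to stand entirely on its own. As an outline it is of the right genre --- represent $\hat M$ as an inverse limit adapted to $\pi$, build fiber-preserving homeomorphisms at finite levels carrying $x_i$ to $y_i$, and pass to a limit through near-commuting diagrams --- and two of your observations are correct and worth keeping: Theorem \ref{WL1} alone is useless here (with $h=\mathrm{id}_M$ the identity is already a lift), and the exactness of $\pi\circ\hat h=\pi$ and $\hat h(x)=y$ in the limit is actually the \emph{easy} part, since it follows formally from a Mioduszewski-type induced-map argument once each level map satisfies $q_i\circ\hat h_i=q_i$ and $\hat h_i(x_i)=y_i$ exactly and the $q_i$ commute exactly with the bonding maps.

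The genuine gaps are elsewhere. First, you assume that Lewis's $\hat M$ comes presented as $\varprojlim(\hat M_i,\hat p_i)$ with finite graphs $\hat M_i$, maps $q_i:\hat M_i\to M_i$ whose fibers are arcs, and bonding maps ``crooked in the fiber direction.'' That structural description is precisely the content of Lewis's construction in \cite{WL}; since the hypothesis of Theorem \ref{WL2} is ``the $\hat M$ produced by Theorem \ref{WL1},'' you must either reproduce that construction or cite its internal structure precisely --- you do neither, you simply recall it. Second, and more seriously, the analytic core of the proof --- producing homeomorphisms $\hat h_i$ with $q_i\circ\hat h_i=q_i$ and $\hat h_i(x_i)=y_i$ that are approximately compatible with the crooked bonding maps with summable errors, \emph{together with} the matching control of the inverses $\hat h_i^{-1}$ needed so that the limit is a homeomorphism rather than merely a continuous surjection --- is delegated to ``the standard Bing inverse-limit technique.'' There is no such quotable lemma: Bing's error-absorption argument concerns a single pseudo-arc (arcs at every level, crooked bonding maps), whereas what is needed here is a fiberwise version carried out uniformly over the varying graphs $M_i$, with the continuity of the decomposition $G$ used quantitatively to get that uniformity. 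That parametrized refinement is exactly the hard content of Lewis's theorem, not a routine citation. Until those two steps are supplied, what you have is a plausible plan, not a proof.
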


\begin{theorem}[Lewis]\label{WL3}  Suppose that $X$ is a pseudo-arc and $G$ is a continuous collection of pseudo-arcs filling up $X$ so that for each $x \in X$, $\pi(x)$ is the element of $G$ containing $x$ and $Y = X/G$.  Then $Y$ is a pseudo-arc, and if $h: Y \rightarrow Y$ is a homeomorphism then there exists a homeomorphism $\hat h: X \rightarrow X$ so that $\pi \circ \hat h = h \circ \pi$.
\end{theorem}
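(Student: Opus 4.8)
The plan is to prove the two assertions in turn: first that the decomposition space $Y=X/G$ is again a pseudo-arc, and then that every self-homeomorphism of $Y$ lifts through $\pi$ to a self-homeomorphism of $X$.

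For the first assertion the basic structural observations are that, since each fiber $\pi^{-1}(y)$ is a pseudo-arc and hence a nondegenerate continuum, the quotient map $\pi$ is \emph{monotone}, and since $G$ is a \emph{continuous} decomposition, $\pi$ is moreover open (and closed, as $X$ is compact). Being a continuous image of the continuum $X$, the space $Y$ is a continuum. To see that $Y$ is hereditarily indecomposable, let $Z\subseteq Y$ be any subcontinuum. Because $\pi$ is monotone, $\pi^{-1}(Z)$ is a subcontinuum of $X$, and as $X$ is hereditarily indecomposable $\pi^{-1}(Z)$ is indecomposable. The restriction $\pi|_{\pi^{-1}(Z)}\colon \pi^{-1}(Z)\to Z$ is again a monotone surjection, and a monotone image of an indecomposable continuum is indecomposable: if $Z=A\cup B$ with $A,B$ proper subcontinua, then $\pi^{-1}(A),\pi^{-1}(B)$ would be proper subcontinua covering $\pi^{-1}(Z)$, a contradiction. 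Hence every subcontinuum of $Y$ is indecomposable, i.e. $Y$ is hereditarily indecomposable.

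It remains to check that $Y$ is chainable, after which Bing's topological characterization of the pseudo-arc as the unique chainable hereditarily indecomposable continuum completes the first assertion. This is the one step that is \emph{not} a soft consequence of $\pi$ being monotone — indeed hereditary indecomposability and atriodicity pass to arbitrary monotone images, but those properties alone would not rule out a circle-like quotient, so chainability must be extracted genuinely. I would derive it from the principle that an open monotone image of a chainable continuum is chainable. The mechanism is that, for a fine open chain $\{C_1,\dots,C_n\}$ covering $X$, the connectedness of each fiber $\pi^{-1}(y)$ forces the set of indices $i$ with $C_i\cap\pi^{-1}(y)\ne\varnothing$ to be an interval of consecutive integers, so the fibers are linearly ordered by the chain; using the openness of $\pi$ this linear order can be promoted to an open chain in $Y$ refining any prescribed finite open cover. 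Equivalently, one represents $X$ as an inverse limit of arcs, checks that $G$ induces compatible monotone decompositions of the approximating arcs (whose quotients are arcs or points), and realizes $Y$ as an inverse limit of arcs, hence chainable.

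For the lifting property the plan is to realize the continuous decomposition through an inverse limit and thread $h$ through it. Applying Theorem~\ref{WL1} to the base $M=Y$ — which we now know to be a pseudo-arc — produces a model continuous curve of pseudo-arcs $\pi_0\colon \hat Y\to Y$ carrying the lifting property; one then establishes that a continuous decomposition of a pseudo-arc into pseudo-arcs over a fixed pseudo-arc base is unique up to a fiber-preserving homeomorphism, yielding a homeomorphism $\Phi\colon X\to\hat Y$ with $\pi_0\circ\Phi=\pi$, so that the lift $\hat h_0$ of $h$ provided by Theorem~\ref{WL1} transports to $\hat h=\Phi^{-1}\circ\hat h_0\circ\Phi$, which satisfies $\pi\circ\hat h=h\circ\pi$. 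More self-containedly, one represents $X$ as $\varprojlim(X_k,f_k)$ with the $X_k$ adapted to $G$, lifts $h$ to a near-homeomorphism of the inverse system by the Mioduszewski-type mapping theorems available for pseudo-arcs (with the freedom inside fibers supplied by Theorem~\ref{WL2}), and passes to the limit. I expect this lifting step to be the main obstacle: unlike the quotient being a pseudo-arc, it is not a formal consequence of $\pi$ being monotone and open but requires the pseudo-arc–specific machinery — either the uniqueness of the decomposition over a given base, or the approximation of the target homeomorphism by bonding maps at finite stages — and it is precisely here that Theorems~\ref{WL1} and \ref{WL2} carry the genuine weight of the argument.
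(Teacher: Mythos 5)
A preliminary point: the paper contains no proof of this statement. It is one of three background results (Theorems \ref{WL1}--\ref{WL3}) quoted from Lewis's paper \emph{Continuous curves of pseudo-arcs} \cite{WL} and used as a black box in Section 3, so your proposal can only be judged against what the theorem genuinely requires, not against an argument in the text.

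Your first half is essentially sound. Continuity of the decomposition makes $\pi$ monotone, open and closed; your preimage argument correctly shows that hereditary indecomposability passes to $Y$; and you rightly isolate chainability as the one nontrivial point. The principle you invoke there is a real theorem (Rosenholtz: the open image of a chainable continuum is chainable), and citing it, together with Bing's characterization of the pseudo-arc as the unique nondegenerate hereditarily indecomposable chainable continuum, closes this half; your link-by-link sketch glosses over the real difficulty (distinct fibers can interleave inside the chain), so cite the theorem rather than rederive it. Your parenthetical alternative --- that $G$ ``induces compatible monotone decompositions of the approximating arcs'' in an inverse-limit representation of $X$ --- does not work as stated: the fibers of $\pi$ have no reason to project to a decomposition of the factor arcs.

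The genuine gap is in the lifting half. You reduce it to Theorem \ref{WL1} by asserting that a continuous decomposition of a pseudo-arc into pseudo-arcs over a fixed pseudo-arc base is unique up to fiber-preserving homeomorphism, i.e.\ that there exists $\Phi\colon X\to \hat Y$ with $\pi_0\circ\Phi=\pi$. That assertion is not a standard lemma you may appeal to, it does not follow from Theorems \ref{WL1} or \ref{WL2} (both of which concern Lewis's \emph{constructed} curve of pseudo-arcs, not an arbitrary given decomposition of an arbitrary pseudo-arc), and it is at least as strong as the statement being proved: Theorem \ref{WL3} is precisely the claim that arbitrary continuous decompositions behave like the constructed one, and your $\Phi$ amounts to lifting the identity of $Y$ simultaneously through two different decompositions --- a rigidity statement stronger than lifting one homeomorphism through one decomposition. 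The fallback sketch (``represent $X$ as an inverse limit adapted to $G$ and lift $h$ by Mioduszewski-type approximation, with Theorem \ref{WL2} supplying freedom in the fibers'') names the right tools but contains no argument: the entire difficulty of Lewis's theorem is to produce a chain or inverse-limit structure on $X$ compatible both with the given $G$ and with the prescribed homeomorphism $h$ of the base, and nothing in your sketch explains how that compatibility is achieved. As it stands, the lifting half is assumed rather than proved.
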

\begin{center}
\begin{displaymath}
\xymatrix{
X \ar@{<-}[r]^{\hat h }\ar@{->}[d]^{\pi}& X \ar@{->}[d]^{\pi}\\
Y \ar@{<-}[r]^{h}& Y
}
\end{displaymath}
\end{center}
\begin{theorem} \label{new1} \cite{En}  Suppose that $X = \varprojlim\{X_\alpha, f_\alpha^\beta \}_{\alpha < \beta < \omega_1}$ and $Y = \varprojlim\{Y_\alpha, G_\alpha^\beta \}_{\alpha < \beta < \omega_1}$ are inverse limits and there is an index $\gamma < \omega_1$ and a collection of homeomorphisms $\varphi_\delta$ for $\delta > \gamma$ so that the following diagram commutes:
\begin{center}
\begin{displaymath}
\xymatrix{
X_\gamma \ar@{<-}[r]^{ f_\gamma^\delta }\ar@{->}[d]^{\varphi_\gamma}& X_\delta \ar@{->}[d]^{\varphi_\delta}\\
Y_\gamma \ar@{<-}[r]^{g_\gamma^\delta}& Y_\delta
}
\end{displaymath}
\end{center}
Then there is an induced homeomorphism $\varphi: X \rightarrow Y$ so that the following commute:
\begin{displaymath}
\xymatrix{
X_\gamma \ar@{<-}[r]^{ \pi_\gamma }\ar@{->}[d]^{\varphi_\gamma}& X \ar@{->}[d]^{\varphi}\\
Y_\gamma \ar@{<-}[r]^{\pi_\gamma }& Y
}
\end{displaymath}
\end{theorem}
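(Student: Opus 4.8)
The plan is to treat this as the standard fact that an isomorphism between the cofinal tails of two inverse systems induces a homeomorphism of their limits, and then simply to read off the compatibility with the $\gamma$-th projection. Throughout I write $\pi_\alpha^X\colon X\to X_\alpha$ and $\pi_\alpha^Y\colon Y\to Y_\alpha$ for the canonical projections of the two limits, and I interpret the hypothesis in the way that makes the coordinatewise construction go through, namely that $\{\varphi_\delta\}_{\gamma\le\delta<\omega_1}$ is a level-preserving isomorphism of the tail subsystems: for every pair $\gamma\le\delta<\epsilon<\omega_1$ one has $g_\delta^\epsilon\circ\varphi_\epsilon=\varphi_\delta\circ f_\delta^\epsilon$, the displayed square being the representative instance.

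First I would record the cofinality reduction. The set $T=\{\delta:\gamma\le\delta<\omega_1\}$ is cofinal in $[0,\omega_1)$, and for any inverse system indexed by a directed set the restriction to a cofinal subsystem has canonically homeomorphic limit \cite{En}; concretely, a thread on $T$ determines its remaining coordinates by $x_\alpha=f_\alpha^\gamma(x_\gamma)$ for $\alpha<\gamma$, and this correspondence is a homeomorphism commuting with all projections. Thus it suffices to produce the homeomorphism between $X_T=\varprojlim\{X_\delta,f_\delta^\epsilon\}_{\delta\in T}$ and $Y_T=\varprojlim\{Y_\delta,g_\delta^\epsilon\}_{\delta\in T}$ and to transport it back through these canonical identifications.

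Next I would define the induced map coordinatewise by $\varphi_T\bigl((x_\delta)_{\delta\in T}\bigr)=(\varphi_\delta(x_\delta))_{\delta\in T}$. The commuting squares guarantee that the image is a genuine thread, since $g_\delta^\epsilon(\varphi_\epsilon(x_\epsilon))=\varphi_\delta(f_\delta^\epsilon(x_\epsilon))=\varphi_\delta(x_\delta)$. Continuity is immediate, because $\varphi_T$ is the restriction to $X_T$ of the product map $\prod_{\delta\in T}\varphi_\delta$, which is continuous in the subproduct topology carried by the inverse limits. As each $\varphi_\delta$ is a homeomorphism, the family $\{\varphi_\delta^{-1}\}_{\delta\in T}$ satisfies the inverse squares $f_\delta^\epsilon\circ\varphi_\epsilon^{-1}=\varphi_\delta^{-1}\circ g_\delta^\epsilon$ and hence induces $\psi_T\colon Y_T\to X_T$ by the same recipe; coordinatewise $\psi_T$ and $\varphi_T$ are mutually inverse, so $\varphi_T$ is a homeomorphism. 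Transporting through the identifications of the previous step yields $\varphi\colon X\to Y$. Finally, since $\gamma\in T$ the $\gamma$-coordinate of $\varphi(x)$ is exactly $\varphi_\gamma$ applied to the $\gamma$-coordinate of $x$, that is $\pi_\gamma^Y\circ\varphi=\varphi_\gamma\circ\pi_\gamma^X$, which is the asserted commuting square.

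I expect the only genuinely substantive point to be the cofinality reduction of the first step: verifying that discarding the coordinates below $\gamma$ loses no information because each lower coordinate is recovered uniquely as $x_\alpha=f_\alpha^\gamma(x_\gamma)$, so that a tail thread extends to a unique full thread and the extension is a homeomorphism both ways. Everything after that is a routine coordinatewise verification. A secondary, purely expository point is pinning down the meaning of ``the diagram commutes'': the construction needs the squares for all comparable pairs in $T$, not merely those involving $\gamma$, and I would state this explicitly at the outset so that the thread condition in the second step is justified.
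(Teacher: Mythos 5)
Your proof is correct. Note that the paper itself contains no proof of this statement: it is quoted in the preliminaries as a background fact from Engelking's textbook \cite{En}, and your argument --- reduction to the cofinal tail $\{\delta:\gamma\le\delta<\omega_1\}$ followed by the coordinatewise induced map and its coordinatewise inverse --- is precisely the standard argument that citation stands for, so there is nothing in the paper to diverge from. Your interpretive remark is also well taken: as literally stated the hypothesis only provides the squares relating level $\gamma$ to level $\delta$, which is not enough to guarantee that the coordinatewise image of a thread is a thread, so one must read the hypothesis as commutativity for all pairs $\gamma\le\delta<\epsilon$; this stronger form is exactly what holds in the paper's application in Section 3, where the $\varphi_\delta$ are built by transfinite induction so that all consecutive (and hence, inductively, all) tail squares commute.
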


In 1985 the second author constructed a so-called nonmetric pseudo-arc \cite{Sm}, as follows. Suppose that for each $\alpha < \omega_1$, $X_\alpha$ is a pseudo-arc.  If $\alpha$ is not a limit ordinal then let $G_\alpha$ be a continuous decomposition of $X_\alpha$ so that the decomposition space $X_\alpha/G_\alpha = X_{\alpha-1}$ with corresponding open map $f_{\alpha-1}$.  For each limit ordinal $\gamma$ define $X_\gamma = \varprojlim\{X_\alpha, f_\alpha^\beta\}_{\alpha < \beta < \gamma}$; and $f_\alpha^\beta \circ f_\beta^\gamma = f_\alpha^\gamma$ then $M = \varprojlim\{X_\alpha, f_\alpha^\beta\}_{\alpha < \beta < \omega_1}$ is a nonmetric analogue of the pseudo-arc; $M$ is homogeneous and hereditarily equivalent; the natural projection $\pi_\alpha:M\to X_\alpha$ is a closed map for each $\alpha< \omega_1$ (cf. Claim 3.1.1 in \cite{BS}). In addition, $M$ has covering dimension 1, due to the following result of Katuta.
\begin{lemma}(Katuta, \cite{Katuta})\label{Katuta}
Let $\{X_\alpha,f_\alpha^\beta\}$ be an inverse system over a well-ordered set $\Omega$. If each $X_\alpha$ is a normal space with covering dimension at most $n$, the inverse limit $X$ is $|\Omega|$-paracompact and each projection $\pi_\alpha:X\to X_\alpha$ is a closed map, then the covering dimension of $X$ is at most $n$.
\end{lemma}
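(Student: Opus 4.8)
The plan is to derive the bound for $X$ from the same bound on the factors $X_\alpha$ by refining covers one stage at a time, using the closed projections to transport the \emph{order} of a cover down to $X_\alpha$ and back up. First I would make two harmless reductions. Replacing each $X_\alpha$ by the image $\pi_\alpha(X)$, which is closed (the $\pi_\alpha$ being closed maps) and hence of covering dimension at most $n$, I may assume that every $\pi_\alpha$ is a closed surjection. Moreover, since $\Omega$ is directed, the ``cylindrical'' sets $\pi_\alpha^{-1}(U)$ with $U\subseteq X_\alpha$ open form a base for $X$, so every finite open cover of $X$ is refined by cylindrical sets. The decisive elementary fact is that $\pi_\alpha^{-1}$ preserves order: a point $x$ lies in exactly as many sets $\pi_\alpha^{-1}(V_i)$ as $\pi_\alpha(x)$ lies in sets $V_i$. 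Consequently it suffices to show that every finite open cover $\mathcal U=\{W_1,\dots,W_k\}$ of $X$ admits an open refinement of order at most $n+1$.

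For a fixed such $\mathcal U$ I would form the family $G_\alpha:=\pi_\alpha^{-1}(O_\alpha)$, where $O_\alpha\subseteq X_\alpha$ is the union of all open $N$ with $\pi_\alpha^{-1}(N)$ contained in a single $W_j$. Pulling an $N$ through a bonding map $f_\alpha^\beta$ shows $G_\alpha\subseteq G_\beta$ for $\alpha<\beta$, and because cylindrical sets form a base every point of $X$ lies in some $G_\alpha$; thus $\{G_\alpha\}_{\alpha<\lambda}$ is an increasing open cover of $X$, where $\lambda$ is the order type of $\Omega$. If $X$ were compact this cover would stabilize, giving $G_{\alpha^*}=X$; then surjectivity forces $O_{\alpha^*}=X_{\alpha^*}$, so the generating sets $N$ form an open cover $\mathcal V$ of $X_{\alpha^*}$ with $\pi_{\alpha^*}^{-1}(\mathcal V)$ refining $\mathcal U$, and refining $\mathcal V$ to order at most $n+1$ in the at most $n$-dimensional space $X_{\alpha^*}$ and pulling back would finish the argument. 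The non-compact case is exactly where the hypotheses must be used, since an increasing closed-complement family with empty intersection need not stabilize.

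For the general case I would apply $|\Omega|$-paracompactness to the increasing open cover $\{G_\alpha\}$, whose cardinality is at most $|\Omega|$, to obtain a locally finite closed cover $\{F_\alpha\}$ with $F_\alpha\subseteq G_\alpha$. For each $\alpha$ the set $B_\alpha:=\pi_\alpha(F_\alpha)$ is closed in $X_\alpha$ (the projection being closed), so $\dim B_\alpha\le n$ by the closed-subspace monotonicity valid in normal spaces; the sets $N$ defining $O_\alpha$ restrict to a relatively open cover of $B_\alpha$, which I refine to order at most $n+1$ on $B_\alpha$, extend to open sets of $X_\alpha$, pull back by $\pi_\alpha$, and intersect with a neighborhood of $F_\alpha$. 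This produces, over each piece $F_\alpha$, an order at most $n+1$ open family that refines $\mathcal U$. The remaining task is to amalgamate these local refinements along the locally finite closed cover into a single global open refinement of order at most $n+1$.

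The hard part will be precisely this amalgamation, because intersecting or unioning order $n+1$ families can raise the order. The pieces must therefore be glued by a transfinite recursion along $\Omega$ (this is where the well-ordering is essential): at each successor stage one extends the refinement already built over $\bigcup_{\beta<\alpha}F_\beta$ across the new piece $F_\alpha$, using normality to separate the new sets from the old while preserving both the order bound and local finiteness, and at limit stages one invokes $|\Omega|$-paracompactness to keep the accumulating family locally finite. This step is in effect an inverse-limit-adapted version of the locally finite sum theorem for covering dimension, and it is the only place where normality, the closedness of the projections, the well-ordering of $\Omega$, and $|\Omega|$-paracompactness are all needed at once; the rest of the argument is formal.
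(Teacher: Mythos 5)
A preliminary remark on the comparison itself: the paper contains no proof of this lemma to compare against. It is quoted from Katuta \cite{Katuta} and used as a black box, solely to conclude that the nonmetric pseudo-arc $M$ has covering dimension $1$ so that Lemma \ref{polar} applies. Your attempt therefore has to be judged on its own merits as a proof of Katuta's theorem, and judged that way it has two genuine gaps.

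First, the step you defer is the theorem. The parts you execute are correct and sensible: the reduction to surjective closed projections, the cylindrical base, the fact that $\pi_\alpha^{-1}$ preserves the order of a family, the increasing open cover $G_\alpha=\pi_\alpha^{-1}(O_\alpha)$, the compact case, and the order $\le n+1$ control over each piece $F_\alpha$ obtained from $\dim \pi_\alpha(F_\alpha)\le n$. But all of this is the routine half; it reduces the statement to what you call the ``amalgamation,'' i.e.\ to a relative, transfinite form of the locally finite closed sum theorem, and that is precisely what you do not carry out. A correct execution must run a recursion of shrinkings of the given finite cover in which each stage modifies the cover only inside a controlled neighborhood of $F_\alpha$; otherwise a point's membership in the shrinking sets can change cofinally often, the transfinite intersection at limit stages need not be open, and covering can be lost (shrinking preserves order bounds but not coveringness). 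Local finiteness of $\{F_\alpha\}$ is what makes memberships stabilize, and arranging this is where the hypotheses do their work. Declaring that this ``is in effect an inverse-limit-adapted version of the locally finite sum theorem'' and that ``the rest of the argument is formal'' inverts the situation: the executed part is the formal part, and the deferred part is the content. (Also, the well-ordering of $\Omega$ is not what makes this recursion possible --- any index set of a locally finite family can be well-ordered; the order structure of $\Omega$ matters elsewhere, in the structure of the system itself.)

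Second, and independently: you repeatedly use normality of the limit $X$, which is not a hypothesis. The closed shrinking $\{F_\alpha\}$ with $F_\alpha\subseteq G_\alpha$ requires it --- existence of closed shrinkings of open covers is equivalent to normality already for two-element covers --- and your gluing step explicitly invokes ``normality to separate the new sets from the old.'' The lemma assumes only that the factors $X_\alpha$ are normal and that $X$ is $|\Omega|$-paracompact, and $m$-paracompactness for a fixed cardinal $m$ does not imply normality (there are countably paracompact non-normal spaces). So you must either first prove that $X$ is normal under these hypotheses --- a nontrivial fact about well-ordered inverse systems with closed projections, and part of what Katuta's argument establishes --- or restructure the proof to avoid closed shrinkings and separation inside $X$. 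As written, the proposal silently strengthens the hypotheses.
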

\begin{figure}[h]
	\centering
		 \includegraphics[width=0.85\textwidth]{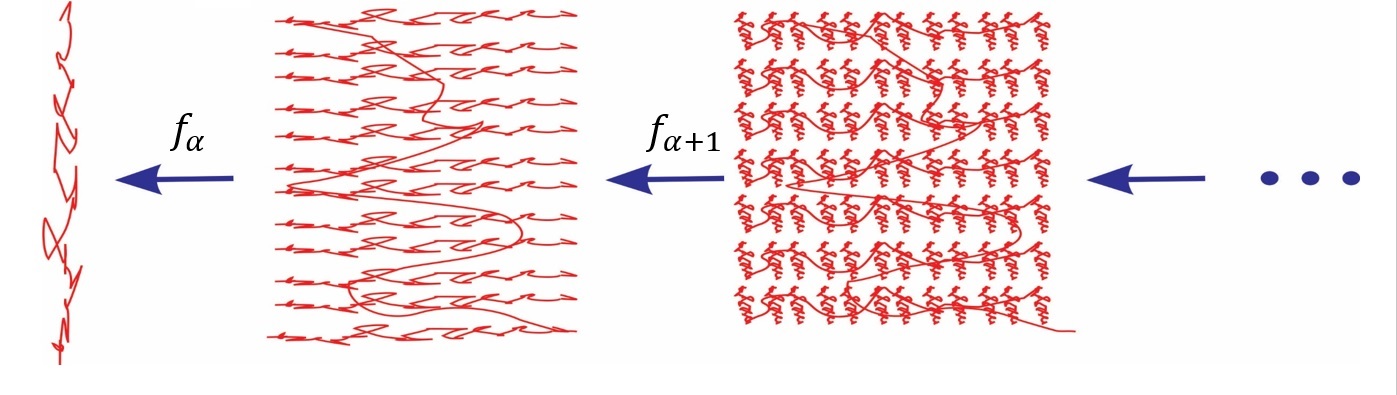}
	\caption{Constructing Smith's nonmetric pseudo-arc}
	\label{fig:michelinvlim}
\end{figure}

\section{Almost positive transitivity of the nonmetric pseudo-arc}

Let $M$ be the nonmetric pseudo-arc with $M=\varprojlim \{X_\alpha, f_\alpha^\beta\}_{\alpha < \beta < \omega_1}$ where for each $\alpha$, $X_\alpha$ is a pseudo-arc and if $G_{\alpha + 1} = \{ f_\alpha^{-1}(x) | x \in X_\alpha \}$ then $G_{\alpha+1}$ is a continuous decomposition of $X_{\alpha + 1}$ with corresponding open monotone map $f_\alpha$ and $X_\alpha = X_{\alpha+1}/G_{\alpha+1}$. The following theorem was proved for the metric pseudo-arc in \cite{KK2} and \cite{Ra}. We will show it holds for the nonmetric pseudo-arc $M$.

\begin{theorem}  Let $P \in M$ and $f,g: M \rightarrow [0,1]$ be maps such that $f(P) = g(P) = 0$, then for each $\epsilon >0$ there is homeomorphism $\varphi: M \rightarrow M$ such that $\varphi(P) = P$ and $|f(t) - g(\varphi(t))| < \epsilon$ for all $t \in M$. In particular, if $L=M\setminus\{P\}$ then $C_0(L,\mathbb{C})$ is almost positive transitive for any $P\in M$.
\end{theorem}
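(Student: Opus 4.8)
The plan is to reduce the nonmetric statement to the known metric result (the Kawamura--Rambla theorem for the metric pseudo-arc, stated above) by exploiting the inverse-limit structure $M=\varprojlim\{X_\alpha,f_\alpha^\beta\}_{\alpha<\beta<\omega_1}$ together with the tower of continuous pseudo-arc decompositions linking consecutive levels. Since $M$ has covering dimension $1$ (by Lemma \ref{Katuta}), Rambla's Lemma \ref{polar} already supplies almost polar decompositions, so it suffices to produce the approximating homeomorphism $\varphi$ fixing $P$; almost positive transitivity, and hence almost transitivity of $C_0(L,\mathbb{C})$, then follows formally.

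First I would observe that $f,g\colon M\to[0,1]$ are maps into a metric space, so by compactness of $M$ and the fact that each projection $\pi_\alpha\colon M\to X_\alpha$ is a closed map, both $f$ and $g$ factor, up to error $\epsilon/4$, through some single level $X_\alpha$. Concretely, using that $\{X_\alpha\}$ is a cofinal tower and that $C(M,[0,1])=\varinjlim C(X_\alpha,[0,1])$ in the relevant approximate sense, I would choose an index $\alpha<\omega_1$ and maps $\bar f,\bar g\colon X_\alpha\to[0,1]$ with $\|f-\bar f\circ\pi_\alpha\|<\epsilon/4$ and $\|g-\bar g\circ\pi_\alpha\|<\epsilon/4$, and arrange that $\bar f(\pi_\alpha(P))=\bar g(\pi_\alpha(P))=0$ by modifying near $P$. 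Since $X_\alpha$ is a (metric) pseudo-arc and $\pi_\alpha(P)\in X_\alpha$ is a single point, the metric Kawamura--Rambla theorem yields a homeomorphism $\psi\colon X_\alpha\to X_\alpha$ fixing $\pi_\alpha(P)$ with $|\bar f(x)-\bar g(\psi(x))|<\epsilon/4$ for all $x\in X_\alpha$.

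The core step is then to lift $\psi$ back up the tower to a homeomorphism $\varphi\colon M\to M$ with $\pi_\alpha\circ\varphi=\psi\circ\pi_\alpha$ and $\varphi(P)=P$. Here I would invoke the decomposition machinery of Lewis: because $X_{\beta+1}/G_{\beta+1}=X_\beta$ via the open monotone map $f_\beta$ and $G_{\beta+1}$ is a continuous decomposition of the pseudo-arc $X_{\beta+1}$ into pseudo-arcs, Theorem \ref{WL3} lets me lift a homeomorphism of $X_\beta$ to one of $X_{\beta+1}$ commuting with $f_\beta$. Iterating this lifting transfinitely for all $\beta\geq\alpha$ (taking inverse limits at limit ordinals via Theorem \ref{new1}) produces a coherent family of homeomorphisms $\psi_\beta\colon X_\beta\to X_\beta$ that assembles into $\varphi\colon M\to M$ with $\pi_\beta\circ\varphi=\psi_\beta\circ\pi_\beta$. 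To guarantee $\varphi(P)=P$ exactly, I would use the freedom in the lifts afforded by Theorem \ref{WL2}: within the fiber $f_\beta^{-1}(\pi_\beta(P))$ one can adjust the lifted homeomorphism to send the $X_{\beta+1}$-coordinate of $P$ to itself, and compatibility of these adjustments through the inverse limit pins down $\varphi(P)=P$.

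Finally I would estimate the total error. Since $\pi_\alpha\circ\varphi=\psi\circ\pi_\alpha$, for every $t\in M$ we have, writing $x=\pi_\alpha(t)$,
\[
|f(t)-g(\varphi(t))|\le |f(t)-\bar f(x)|+|\bar f(x)-\bar g(\psi(x))|+|\bar g(\psi(x))-g(\varphi(t))|,
\]
where the first term is $<\epsilon/4$, the middle is $<\epsilon/4$, and the third equals $|\bar g(\pi_\alpha(\varphi(t)))-g(\varphi(t))|<\epsilon/4$; hence $|f(t)-g(\varphi(t))|<\epsilon$. The main obstacle I anticipate is the transfinite lifting: I must verify that the one-step Lewis lifting is compatible with passage to the inverse limit at limit ordinals (so that the diagram of Theorem \ref{new1} genuinely applies with a coherent choice of $\varphi_\delta$), and that the base-point adjustments from Theorem \ref{WL2} can be made simultaneously consistent along the whole well-ordered tower rather than level by level; ensuring the resulting $\varphi$ is a genuine homeomorphism of the nonmetric space $M$ (and not merely a bijection) is precisely what the closedness of the projections $\pi_\alpha$ and the commuting inverse-limit diagram are there to secure.
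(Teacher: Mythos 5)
Your overall strategy coincides with the paper's: push $f$ and $g$ down to a countable level of the tower, apply the metric Kawamura--Rambla theorem there, lift the resulting homeomorphism back up through the decompositions (Theorems \ref{WL2} and \ref{WL3} at successor ordinals, Theorem \ref{new1} at limit ordinals, with fiberwise correction of base points), and close with the triangle inequality. The lifting half and the final estimate are essentially identical to the paper's argument.

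There is, however, a genuine gap in the step where you invoke the metric theorem. That theorem (as stated in Section 2) requires its two functions on the pseudo-arc $X_\alpha$ to be \emph{onto} $[0,1]$ and to vanish at the common point $\pi_\alpha(P)$; your Stone--Weierstrass-type approximation delivers neither property, and your proposed fix (``modifying near $P$'') addresses only the second. Replacing $\bar f$ by $\max\bigl(\bar f-\bar f(\pi_\alpha(P)),0\bigr)$ indeed costs less than $\epsilon/4$ and kills the value at $\pi_\alpha(P)$, but the resulting function need not attain the value $1$, so it is not a surjection and the metric theorem does not apply as written. This is precisely the difficulty the paper's explicit construction is designed to solve: $\hat f$ is defined by a formula in the fiber-interval endpoints $F(x)=f(\pi_\gamma^{-1}(x))=[a_x,b_x]$ so that, provably, $\hat f(\pi_\gamma(P))=0$ (its condition (1)), $\hat f$ is onto because $b_x=1$ forces $\hat f(x)=1$ (condition (2)), and $a_x\le\hat f(x)\le b_x$ keeps $\hat f\circ\pi_\gamma$ uniformly close to $f$ (condition (5)), with conditions (3)--(4) supplying continuity. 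Your argument is repairable: after the truncation, compose with a piecewise-linear surjection of $[0,1]$ fixing $0$ that maps $[0,\max\bar f]$ onto $[0,1]$; since $f$ and $g$ are onto (this is tacitly assumed throughout---the statement is false for, say, $f\equiv 0$, and in the $C_0$ application $\|f\|=\|g\|=1$ together with connectedness of $M$ gives surjectivity), one has $\max\bar f\ge 1-\epsilon/2$, so the rescaling costs only another $O(\epsilon)$ and the constants can be rebalanced. But as written, the key application of the metric theorem is unjustified.
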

 \begin{proof}

For each $\alpha$ let $d^\alpha$ be a metric for $X_\alpha$.  Suppose that $f$ and $g$ are the functions as given in the hypothesis of the theorem, $P$ is the point and $\epsilon >0$.

Let $\mathcal{R}=\{R_i\}_{i=1}^m$ be a finite collection of open sets covering $M$ so that if $p,q$ lie in the same element of $\mathcal{R}$ then $|f(p) - f(q)| < \min(\frac \epsilon 3, \frac 13)$ and $|g(p) - g(q)| < \min(\frac \epsilon 3, \frac 13)$.  There exists a $\gamma < \omega_1$ and a number $\epsilon_\gamma$ so that (1) if $d^\gamma(p_\gamma, q_\gamma) < \epsilon_\gamma$ then there is an element of $\mathcal{R}$ containing both $p$ and $q$ and (2) for each $x \in X_\gamma$, $\mbox{diam}(f(\pi_\gamma^{-1}(x))< \min(\frac \epsilon 3, \frac 13)$ and $\mbox{diam}(g(\pi_\gamma^{-1}(x))< \min(\frac \epsilon 3, \frac 13)$.
\begin{figure}
	\centering
		 \includegraphics[width=0.85\textwidth]{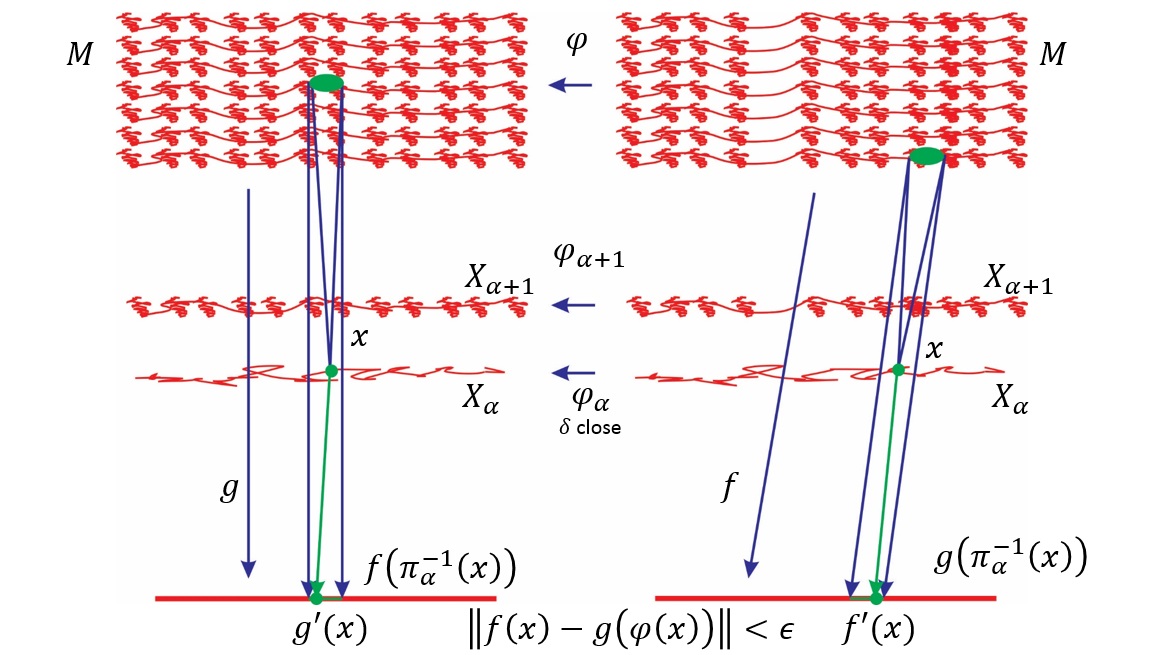}
	\caption{The application of Kawamura-Rambla result at the $\alpha$-level for the auxiliary functions, and an upward inductive bootstraping with repeated instances of Lewis' results in the construction of the homeomorphism $\varphi$.}
	\label{fig:michelinvlim2}
\end{figure}

Observe that the above conditions imply that $\gamma$ is such that $\pi_\gamma^{-1}(x)$ lies in an element of $\mathcal{R}$ for each $x \in X_\gamma$.

Let $F: X_\gamma \rightarrow [0,1]$ be defined by $F(x) = f(\pi_\gamma^{-1}(x))$ and let $G: X_\gamma \rightarrow [0,1]$ be defined by $G(x) = g(\pi_\gamma^{-1}(x))$.  Then for each $x \in X_\gamma$, $F(x)$ is a (possibly degenerate) interval in $[0,1]$ and similarly for $G(x)$.  Let

\begin{eqnarray*}
F(x) = [a_x, b_x] &; & G(x) = [c_x, d_x].
\end{eqnarray*}
Observe that, from the selection of $\gamma$, for each $x \in X_\gamma$, $|b_x - a_x| < \frac 13$;  $|d_x - c_x| < \frac 13$.  And for each $t \in X$ that $a_{t_\gamma} \le f(t) \le b_{t_\gamma}$; $c_{t_\gamma} \le g(t) \le d_{t_\gamma}$.
Define the following function $\hat f: X_\gamma \rightarrow [0,1]$:
$$\hat f(x) = \left \{
\begin{array}{cccc}
\frac{a_x+b_x}2 - \frac{b_x-a_x}2 \Big( \frac{1 - b_x-a_x}{1-b_x}\Big) & \mbox{if } a_x + b_ x \le 1 \\
\frac{a_x+b_x}2 + \frac{b_x-a_x}2 \Big( \frac{ b_x+a_x-1}{a_x}\Big) & \mbox{if } a_x + b_ x > 1
\end{array} \right . $$

Define $\hat g$ similarly with a's replaced by c's and b's replaced by d's.  So the following observations will also be true of $\hat g$.  Note (since $b_x - a_x < \frac 13$) that if $a_x + b_x \le 1$ then $b_x < 1$ so the top part of the function is never undefined and that if $a_x + b_x > 1$ then $a_x >0$ the bottom part is never undefined.  The function has the following properties:

\

\qquad (1) If $a_x = 0$ then $\hat f(x) = 0$;

\qquad (2) If $b_x = 1$ then $\hat f(x) = 1$;

\qquad (3) If $a_x + b_x = 1$ then $\hat f(x) = \frac 12$;

\qquad (4) $\lim_{a_x + b_x \rightarrow 1^+} = \frac 12$;

\qquad (5) $a_x \le \hat f(x) \le b_x$.

\

\noindent Conditions 1-4 can be checked by substitutions.  Condition 5 follows from the fact that, in the definition of the function, less than half the distance between $a_x$ and $b_x$ is added or subtracted from the midpoint of the interval. Conditions 3 and 4 guarantee that $\hat f$ is continuous.  Condition 5 and the choice of $\gamma$ gives us the following inequalities:
\begin{eqnarray*}
|f(t) - \hat f(t_\gamma)| < \frac \epsilon 3 &\ & |f(t) - \hat f(t_\gamma)| < \frac \epsilon 3.
\end{eqnarray*}
Condition (1) implies that $\hat f(P_\gamma) = \hat g(P\gamma) = 0$.  Condition (2) implies that both $\hat f$ and $\hat g$ are onto.

By Theorem  \ref{metric} there exists a homeomorphism $\varphi_\gamma: X_\gamma \rightarrow X_\gamma$ so that $\varphi_\gamma(P_\gamma) = P_\gamma$ and for each $x \in X_\gamma$ ,
\begin{eqnarray*}
|\hat f(x)-\hat g(\varphi_\gamma(x))| & < & \frac \epsilon 3.
\end{eqnarray*}

We consider now the homeomorphism $\varphi_\gamma$.
Using Theorem \ref{WL1} there exists a homeomorphism $\varphi_{\gamma+1}: X_{\gamma+1} \rightarrow X_{\gamma+1}$ so that the following compositions commute:
\begin{center}
\begin{displaymath}
\xymatrix{
X_\gamma \ar@{<-}[r]^{f_\gamma^{\gamma+1} }\ar@{->}[d]^{\varphi_{\gamma}}& X_{\gamma +1} \ar@{->}[d]^{\varphi_{\gamma+1}}\\
X_\gamma \ar@{<-}[r]^{f_\gamma^{\gamma+1}}& X_{\gamma +1}
}
\end{displaymath}
\end{center}
furthermore by Theorem \ref{WL2}, $\varphi_{\gamma+1}$ can be constructed so that $\varphi_{\gamma+1}(P_{\gamma+1}) = P_{\gamma+1}$.
For $\alpha> \gamma$ we define $\varphi_\alpha$ inductively:

Case 1: $\alpha$ is a successor ordinal, $\alpha = \delta + 1$.  Then since $\varphi_\delta: X_\delta \rightarrow X_\delta, \varphi(P_\delta) = P_\delta$, by Lewis' results, Theorem \ref{WL2}, there is a homeomorphism $\varphi_\alpha$ so that the following maps commute:
\begin{center}
\begin{displaymath}
\xymatrix{
X_\delta \ar@{<-}[r]^{f_\delta}\ar@{->}[d]^{\varphi_{\delta}}& X_{\alpha} \ar@{->}[d]^{\varphi_{\alpha}}\\
X_\gamma \ar@{<-}[r]^{f_\delta}& X_{\alpha}
}
\end{displaymath}
\end{center}
Furthermore by Theorem \ref{WL2} $\varphi_\alpha$ can be constructed so that $\varphi_\alpha(P_\alpha) = P_\alpha$.

Case 2: $\alpha$ is a limit ordinal.  Then, by Theorem \ref{new1}, the functions $\{\varphi_\delta\}_{\delta < \alpha}$ induce a homeomorphism $\varphi_\alpha: X_\alpha \rightarrow X_\alpha$ and since $\varphi_\delta(P_\delta) = P_\delta$ for all $\delta < \alpha$ we have $\varphi_\alpha(P_\alpha) = P_\alpha$.

Then the functions $\{\varphi_{\alpha} \}_{\gamma \le \alpha < \omega_1}$ induce a homeomorphism $\varphi: M \rightarrow M$ so that $\varphi(P) = P$.  By construction, for each $x \in M$, we have $(\varphi(x))_\gamma = \varphi_\gamma(x_\gamma)$.

Let $t \in M$.  Then we have the following from our constructions of the functions $\hat f$ and $\hat g$:
\begin{eqnarray*}
|f(t) - \hat f(t_\gamma)| & < & \frac \epsilon 3 \\
|g(\varphi(t)) - \hat g((\varphi(t))_\gamma)| & < & \frac \epsilon 3 \\
|\hat f(t_\gamma) - \hat g(\varphi_\gamma(t_\gamma))| & < & \frac \epsilon 3.
\end{eqnarray*}
Using the triangle inequality we have:
\begin{eqnarray*}
|f(t) - g(\varphi(t))| & < & \epsilon.
\end{eqnarray*}
And since $\varphi(P) = P$, then $\varphi$ is the required function.
\end{proof}
As a consequence of the above theorem, for the nonmetric pseudo-arc $M$ the space $C_0(M \setminus\{p\},\mathbb{C})$ is almost positive transitive for any $p\in M$, and hence by Lemma \ref{polar} and Lemma \ref{Katuta} it is almost transitive.

\section{First uncountability and separability of the nonmetric pseudo-arc.}
Let $M=\varprojlim\{X_\alpha, f_\alpha^\beta\}_{\alpha<\beta<\omega_1}$ be our usual definition of a nonmetric pseudo-arc.  Recall that for each $\alpha$ the collection $G_{\alpha +1} = \{f_\alpha^{-1}(x) | x \in X_\alpha \}$ is a continuous decomposition of $X_{\alpha+1}$ into pseudo-arcs so that the $X_\alpha \cong X_{\alpha+1}/G_{\alpha+1}$.

It has been shown that $M$ is homogeneous and hereditarily equivalent \cite{Sm}.  Suppose that $\alpha< \omega_1$ and $U$ is an open set in $X_\alpha$.  Then a basis element $B(\alpha, U)$ determined by $\alpha$ and $U$ is defined by $B(\alpha, U) = \{ x \in M | x_\alpha \in U\}$;  $\mathcal{B}= \{B(\alpha, U) | \alpha < \omega_1, U \mbox{ is open  in } X_\alpha \}$ is a basis for the topology of $M$.

\begin{theorem}\label{notfirst}  The continuum $M$ is not first countable at each of its points.
\end{theorem}
\begin{proof}  Let $p=\{p_\alpha\}_{\alpha < \omega_1}$ be a point of $M$ and suppose that there is a countable local basis $\{O_n\}_{n=1}^\infty$ at $p$.  Inductively we can construct a countable sequence of ordinals $\{\alpha_n\}_{n=1}^\infty$ and a countable sequence of basis elements $\{B(\alpha_n, U_n)\}_{n=1}^\infty$, with $U_n$ open in $X_n$, so that:
\begin{eqnarray*}
B(\alpha_{n+1},U_{n+1}) & \subset & B(\alpha_n,U_n) \\
B(\alpha_n,U_n) & \subset & \cap_{i=1}^n O_n \\
p_{\alpha_n} & \in & B(\alpha_n,U_n).
\end{eqnarray*}
Since $\{\alpha_n\}_{n=1}^\infty$ is countable there exists $\gamma<\omega_1$ so that $\alpha_n < \gamma$ for all positive integers $n$.  Then there is an element $g_\gamma \in G_\gamma$ so that $p_\gamma \in g_\gamma$.  Let $q_\gamma \in g_\gamma$ be such that $q_\gamma \ne p_\gamma$.  For $\delta < \gamma$ let $q_\delta = p_\delta$.  For $\delta > \gamma$, define inductively $q_\delta$ by:
\begin{eqnarray*}
q_{\delta + 1} & \in & f_\delta^{-1} (q_\delta) .
\end{eqnarray*}
Then for $q=\{q_\delta\}_{\delta < \omega_1}$ we have $q \ne p$ but $q \in B(\alpha_n, U_n)$ for all $n$ so $q \in O_n$ for all $n$.  Therefore $\{O_n\}_{n=1}^\infty$ is not a local basis at $p$.
\begin{figure}[h]
	\centering
\includegraphics[width=0.85\textwidth]{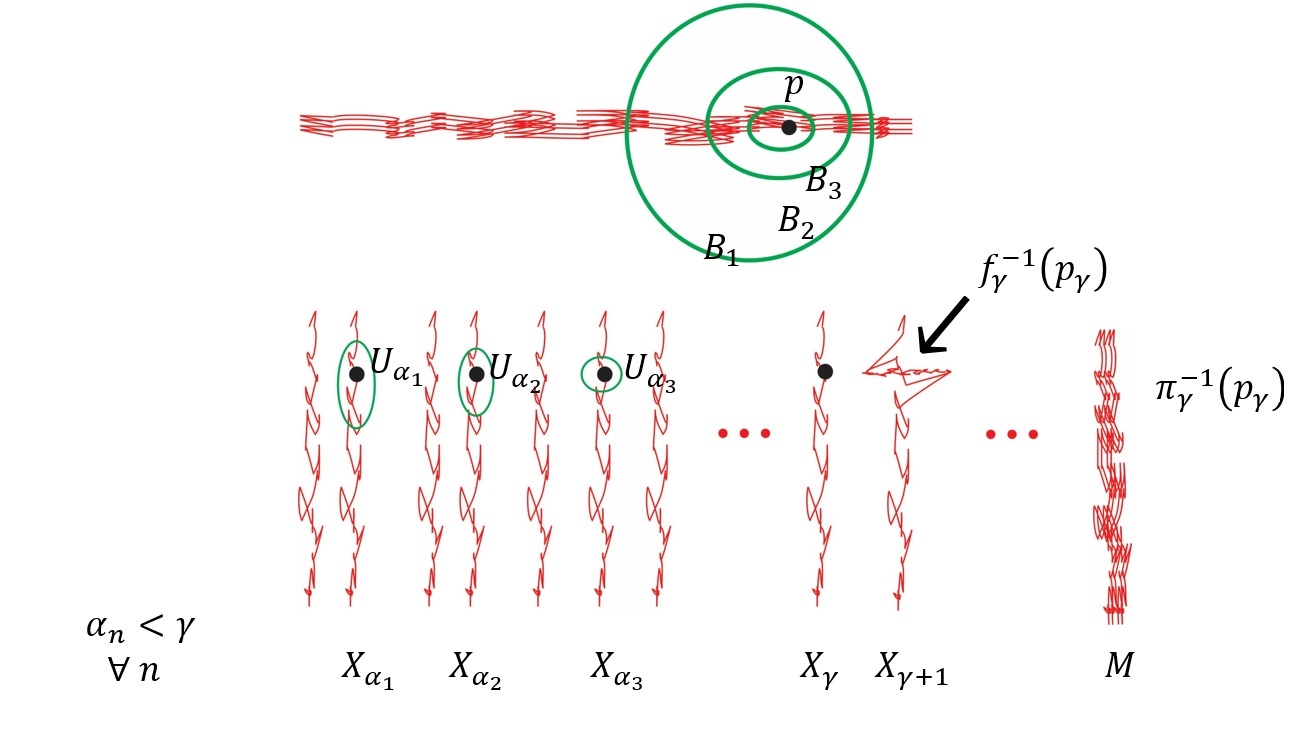}
	\caption{$M$ is not first countable}
	\label{fig:michelinvlim3}
\end{figure}
\end{proof}
\begin{lemma} \label{lem42}
Suppose that $X$ is a metric pseudo-arc, $G$ is a continuous collection of proper subcontinua of $X$ filling up $X$, $Y=X/G$ and $\{g_i\}_{i=1}^\infty$ is a countable set dense in $Y$.  Then there exists a countable set $\{a_i\}_{i=1}^\infty$ so that for each $i$, $a_i \in g_i$ and the set $\{a_i\}_{i=1}^\infty$ is dense in $X$.
\end{lemma}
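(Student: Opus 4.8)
The plan is to exploit the second countability of the metric pseudo-arc $X$ together with the lower semicontinuity of the decomposition, and then to produce the representatives $a_i$ by a greedy ``distinct representatives'' selection indexed by a countable base. First I would fix a countable base $\{B_j\}_{j=1}^\infty$ for the topology of $X$ and record two structural facts. Since $G$ is a \emph{continuous} decomposition, the quotient map $\pi:X\to Y$ is open; and since the elements of $G$ are \emph{proper} subcontinua, there are at least two of them, so $Y=X/G$ is a nondegenerate metric continuum and hence has no isolated points. Writing $y_i=\pi(g_i)$, the hypothesis that $\{g_i\}$ is dense in $Y$ says precisely that $\{y_i\}_{i=1}^\infty$ is a dense subset of the perfect space $Y$.

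The key density estimate I would establish is that for every nonempty open $U\subseteq X$ the index set $I_U=\{\,i:g_i\cap U\neq\emptyset\,\}$ is infinite. Indeed, $\pi(U)$ is open and nonempty because $\pi$ is open, and a dense subset of a perfect space meets every nonempty open set in infinitely many points; hence $\{\,i:y_i\in\pi(U)\,\}$ is infinite, and $y_i\in\pi(U)$ holds exactly when $g_i\cap U\neq\emptyset$. Applying this to the members of the base gives that each $I_{B_j}$ is infinite.

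With this in hand the representatives are built by recursion on $j$. At stage $j$ I would choose an index $i(j)\in I_{B_j}$ distinct from the finitely many indices $i(1),\dots,i(j-1)$ already used, which is possible precisely because $I_{B_j}$ is infinite, and set $a_{i(j)}$ to be any point of $g_{i(j)}\cap B_j$. For every index $i$ not of the form $i(j)$, let $a_i$ be an arbitrary point of $g_i$. By construction $a_i\in g_i$ for all $i$, while $a_{i(j)}\in B_j$ for every $j$, so $\{a_i\}_{i=1}^\infty$ meets each basic open set and is therefore dense in $X$.

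The one delicate point, and the step I expect to require the most care, is the infinitude of $I_{B_j}$: it is exactly what lets the representatives be chosen at pairwise distinct indices, so that no single element $g_i$ is ever asked to contribute two different points to the dense set. This in turn rests on the openness of $\pi$ (continuity of the decomposition) and on $Y$ having no isolated points (properness of the elements of $G$), so I would state both of these cleanly before running the recursion.
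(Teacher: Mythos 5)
Your proof is correct and takes essentially the same approach as the paper's: a greedy selection, indexed by a countable base $\{B_j\}$, of representatives at pairwise distinct indices, ensuring each basic open set receives a point. The only real difference is bookkeeping: you explicitly verify (via openness of $\pi$ and the perfectness of $Y$) that each $I_{B_j}$ is infinite so an unused index is always available, a step the paper's proof leaves implicit, and you assign arbitrary points of $g_i$ to unused indices instead of proving, as the paper does, that every element of $G'$ ends up with exactly one representative.
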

\begin{proof}
Since $X$ is a compact metric space there is a countable basis $\mathcal{B} = \{B_i\}_{i=1}^\infty$ of open sets for $X$.  Let $G' = \{g_i\}_{i=1}^\infty$.
Let $u_1$ be the first element of $\{g_i\}_{i=1}^\infty$ that intersects $B_1$.  Select a point $a_1 \in B_1 \cap u_1 $.  Inductively let $u_n$ be the first element of $\{g_i\}_{i=1}^\infty - \{u_i\}_{i=1}^{n-1}$ that intersects $B_n$ and let $a_n \in u_n \cap B_n$.  Then we claim that $A= \{a_i\}_{i=1}^\infty$ is the desired set.

\

Claim I. $A$ is dense in $X$.  This follows from the fact that each element of $\mathcal{B}$ contains a point of $A$.

\

Claim II.  Each element of $\{g_i\}_{i=1}^\infty$ contains a single point of $A$. Suppose that some element of $G'$ contains no element of $A$; then let $g_k$ be the first such element.  Then $g_k \notin \{u_i\}_{i=1}^k$; let $B_j$ be the first element of $\{B_i\}_{i=1}^\infty$ that intersects $g_k$ but no element of $\{g_i\}_{i=1}^{k-1}$.  Then $a_j$ would have been selected from $ B_j \cap g_k$.  So each element of $G'$ contains a point of $A$.  No element of $G'$ contains two points of $A$ since at step $k$, all the elements of $G'$ containing one of $\{a_i\}_{i=1}^{k-1}$ have been removed from the elements of $G'$ available for the choice of $u_k$.
\end{proof}

\begin{theorem}
 $M$ is separable.
\end{theorem}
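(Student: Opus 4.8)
The plan is to prove that $M$ is separable by transfinite induction on $\alpha < \omega_1$, showing at each stage that the pseudo-arc $X_\alpha$ has a countable dense set whose members are compatible with the bonding maps, so that these lift coherently to a countable dense set in the inverse limit $M$. Recall that a countable set $A \subseteq M$ is dense if and only if it meets every basic open set $B(\alpha, U)$, i.e. if and only if for every $\alpha < \omega_1$ the projection $\pi_\alpha(A)$ is dense in $X_\alpha$. So the real content is to build, simultaneously for all $\alpha$, countable dense sets $D_\alpha \subseteq X_\alpha$ together with points of $M$ projecting onto them, in such a way that $f_\alpha^\beta(D_\beta) \supseteq D_\alpha$ (compatibility downward) while keeping everything countable even though there are uncountably many levels.

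First I would set up the induction. Each $X_\alpha$ is a \emph{metric} pseudo-arc, hence separable, so it has a countable dense subset; the issue is coherence across levels. At a successor step $\alpha = \delta + 1$, the map $f_\delta : X_{\delta+1} \to X_\delta$ realizes $X_\delta$ as the quotient $X_{\delta+1}/G_{\delta+1}$ by a continuous decomposition into pseudo-arcs. Here Lemma \ref{lem42} is exactly the tool: given the countable dense set $D_\delta = \{g_i\}$ in $X_\delta$ (whose elements we view as the decomposition elements $f_\delta^{-1}(g_i) \in G_{\delta+1}$), it produces a countable set $\{a_i\} \subseteq X_{\delta+1}$ that is dense in $X_{\delta+1}$ and selects exactly one point $a_i \in f_\delta^{-1}(g_i)$ from each chosen fiber. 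Taking $D_{\delta+1} = \{a_i\}$ gives a countable dense set lying \emph{over} $D_\delta$ in the precise sense that $f_\delta(D_{\delta+1})$ contains $D_\delta$ and each point of $D_{\delta+1}$ has a prescribed image in $D_\delta$. At a limit ordinal $\gamma$, where $X_\gamma = \varprojlim\{X_\alpha, f_\alpha^\beta\}_{\alpha<\beta<\gamma}$, the compatible threads $\{x_\alpha\}_{\alpha<\gamma}$ chosen over the $D_\alpha$ determine points of $X_\gamma$, and one argues their collection is dense and countable in $X_\gamma$ using that $\gamma$ is a countable limit.

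The subtle point — and the step I expect to be the main obstacle — is the bookkeeping needed to keep the total number of chosen threads countable across all $\omega_1$ levels while guaranteeing density at \emph{every} level. A naive selection produces a set of size continuum or more, because at each level one may be tempted to choose new points. The resolution is to build a single countable collection of coherent threads $\{q^{(i)}\}_{i<\omega}$ of $M$, where each $q^{(i)} = \{q^{(i)}_\alpha\}_{\alpha<\omega_1}$ is obtained by first fixing a countable dense family at some fixed initial level and then extending upward by the successor/limit recipe above, \emph{without introducing genuinely new points at later successor stages beyond what Lemma \ref{lem42} forces}. Concretely, one shows that a countable dense subset $D_\alpha$ at a \emph{single} countable level suffices to witness density, because any basic open set $B(\beta,U)$ is pulled back from some countable level $\beta$, and one can arrange the threads to project densely onto that level. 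I would make this precise by choosing a cofinal countable sequence only when testing a given basic neighborhood, exploiting that each $B(\beta,U)$ involves just one ordinal $\beta<\omega_1$, and then verifying that the countable thread-collection, defined levelwise by the inductive recipe, meets every such $B(\beta,U)$. Once density against all basic open sets is established and countability of the thread-collection is confirmed, separability of $M$ follows immediately.
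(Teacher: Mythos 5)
Your proposal takes essentially the same route as the paper's own proof: a single countable family of coherent threads built by transfinite induction, using Lemma \ref{lem42} at successor stages to lift the dense set fiberwise without introducing new indices, forming inverse-limit threads at limit stages, and verifying density of the resulting countable set in $M$ against basic open sets $B(\beta,U)$, each of which is determined by a single countable level $\beta$. The only slip is your mention of choosing a ``cofinal countable sequence'' in $\omega_1$ (no such sequence exists), but your argument never actually needs one, since density is tested one level at a time.
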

\begin{proof}
Let $\{q_i^1\}_{i=1}^\infty$ be a countable set dense in $X_1$.  For each $\alpha < \omega_1$ we construct inductively a countable sequence of points $\{q_i^\alpha\}_{i=1}^\infty$ dense in $X_\alpha$ so that for each $i$, $f_{\alpha}(q^{\alpha+1}_i) = q^\alpha_i$.

\

Case 1.  $\alpha$ is not a limit ordinal and $\{q_i^{\alpha-1} \}_{i=1}^\infty$ has been constructed.  By Lemma \ref{lem42}, there is a countable collection $A_{\alpha}= \{q_i^{\alpha}\}_{i=1}^\infty$ so that $A_{\alpha}$ is dense in $X_{\alpha}$ and $q_i^{\alpha} \in f_{\alpha-1}^{-1}(q_i^{\alpha-1})$.

\

Case 2.  $\alpha$ is a limit ordinal.  Then for each positive integer $i$ set
$q_i^\alpha = \varprojlim\{\{ q_i^\gamma \}, f_\gamma^\delta\}_{\gamma<\delta<\alpha}$.

\

Then let $q_i = \{q_i^\alpha\}_{\alpha < \omega_1}$; and let $A= \{q_i\}_{i=1}^\infty$.
The set $A$ is countable; to show that $A$ is dense in $M$, select an open set $O$ in $M$.  Then there is a basic open set $B(\alpha, U)$ lying in $O$, where $U$ is open in $X_\alpha$.  Then, by construction  $\{q_i^\alpha\}_{i=1}^\infty$ is dense in $X_\alpha$ so there is an integer $i$ so that $q_i^\alpha \in U$.  Then $q_i \in B(\alpha, U)$ so $q_i \in O$.  Thus $A$ is dense in $M$.
\begin{figure}[h]
	\centering
		 \includegraphics[width=0.99\textwidth]{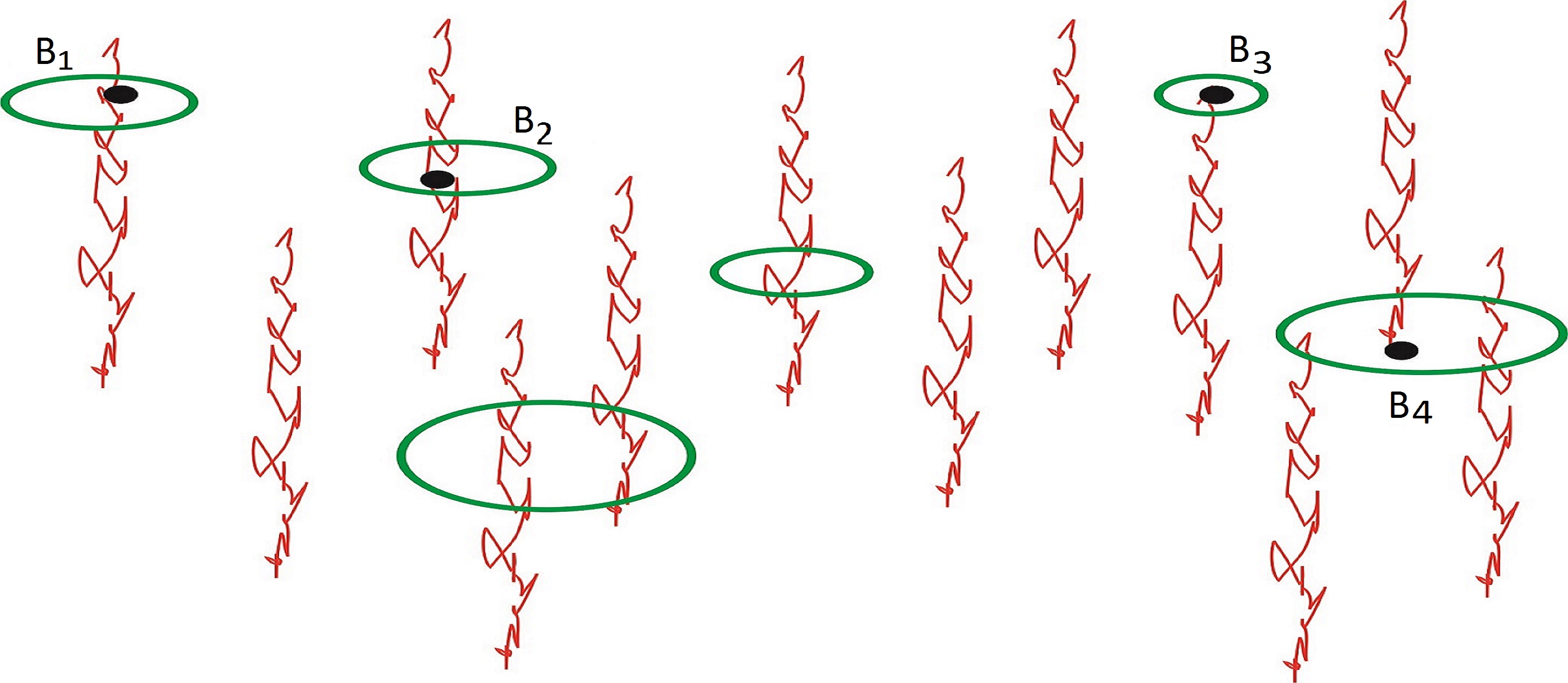}
	\caption{$M$ is separable}
	\label{fig:michelinvlim4}
\end{figure}
\end{proof}

\section{Non-separability of $C_0(\tilde{M},\mathbb{C})$ and existence of convergent infinite sequences in $M$}
Let $M=\varprojlim\{X_\alpha, f_\alpha^\beta\}_{\alpha<\beta<\omega_1}$ be our usual definition of the nonmetric pseudo-arc.  Recall that for each $\alpha$ the collection $G_{\alpha +1} = \{f_\alpha^{-1}(x) | x \in X_\alpha \}$ is a continuous decomposition of $X_{\alpha+1}$ into pseudo-arcs so that the $X_\alpha \cong X_{\alpha+1}/G_{\alpha+1}$. Let $C_0(M)$ denote the set of continuous maps from $M$ into the interval $[0,1]$ with the least upper bound metric $d$. 
Note that neither $C_0(M)$ nor $C_0(M, \mathbb{C})$ is separable, since $M$ and $\tilde{M}$ are nonmetric by Theorem \ref{notfirst}. This justifies Theorem \ref{M} (c). 
Now we prove the following result announced in Theorem \ref{M} (d).
\begin{theorem}
$M$ contains a convergent infinite sequence.
\end{theorem}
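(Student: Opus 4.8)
The plan is to build a nontrivial sequence in $M$ coordinate by coordinate through the transfinite inverse system, exploiting the fact that convergence in $M$ is detected levelwise. Since $\mathcal{B}=\{B(\alpha,U)\}$ is a basis and the index set of ordinals is directed, a basic neighbourhood of a point $x=\{x_\alpha\}$ is exactly a set $\pi_\alpha^{-1}(U)$ with $U\ni x_\alpha$ open in $X_\alpha$; consequently a sequence $\{x^{(n)}\}$ converges to $x$ in $M$ if and only if $x^{(n)}_\alpha=\pi_\alpha(x^{(n)})\to x_\alpha$ in $X_\alpha$ for every $\alpha<\omega_1$. It therefore suffices to produce threads $x^{(n)}=\{a^{(n)}_\alpha\}_{\alpha<\omega_1}$ and $x=\{a_\alpha\}_{\alpha<\omega_1}$ such that the projected sequences converge at every level, with the $x^{(n)}$ pairwise distinct.

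The main tool is a lifting property of the bonding maps: each $f_\alpha\colon X_{\alpha+1}\to X_\alpha$ is an open surjection of metric pseudo-arcs, and such maps transport convergent sequences upward together with a prescribed limit. Concretely, if $z_n\to z$ in $X_\alpha$ and $w\in f_\alpha^{-1}(z)$ is fixed, then there exist $w_n\in f_\alpha^{-1}(z_n)$ with $w_n\to w$. To see this, fix a decreasing neighbourhood base $\{V_k\}$ at $w$ in the metric space $X_{\alpha+1}$ with $\mathrm{diam}(V_k)\to 0$; openness of $f_\alpha$ makes each $f_\alpha(V_k)$ an open set containing $z$, so $z_n\in f_\alpha(V_k)$ for all large $n$, and a diagonal choice of $w_n\in V_k\cap f_\alpha^{-1}(z_n)$ then forces $w_n\to w$.

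With this in hand I proceed by transfinite recursion on $\alpha<\omega_1$, maintaining the hypothesis that $a^{(n)}_\delta\to a_\delta$ in $X_\delta$ for all $\delta\le\alpha$. For the base level I pick any nontrivial convergent sequence $a^{(n)}_0\to a_0$ in the metric pseudo-arc $X_0$ whose terms are pairwise distinct and different from $a_0$. At a successor $\alpha+1$ I fix an arbitrary $a_{\alpha+1}\in f_\alpha^{-1}(a_\alpha)$ and apply the lifting property to obtain $a^{(n)}_{\alpha+1}\in f_\alpha^{-1}(a^{(n)}_\alpha)$ with $a^{(n)}_{\alpha+1}\to a_{\alpha+1}$. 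At a limit ordinal $\gamma$ the previously chosen threads induce points $a_\gamma$ and $a^{(n)}_\gamma$ of $X_\gamma=\varprojlim_{\delta<\gamma}X_\delta$, and since convergence in this inverse limit is coordinatewise, the inductive hypothesis gives $a^{(n)}_\gamma\to a_\gamma$. All choices respect the bonding maps, so $x^{(n)}=\{a^{(n)}_\alpha\}_{\alpha<\omega_1}$ and $x=\{a_\alpha\}_{\alpha<\omega_1}$ are genuine points of $M$.

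Finally, the levelwise convergence $x^{(n)}_\alpha\to x_\alpha$ established at every $\alpha$ yields $x^{(n)}\to x$ in $M$ by the criterion of the first paragraph, while distinctness of the $X_0$-projections makes the $x^{(n)}$ an infinite set; this is the required convergent infinite sequence. I expect the lifting property of the open bonding maps to be the crux: it is the one place where the topology (openness of $f_\alpha$ and metrizability of the individual $X_\alpha$) does real work, through the shrinking-neighbourhood diagonal argument. The transfinite bookkeeping at limit stages, by contrast, is routine once one notes that convergence in the countable inverse limits $X_\gamma$ is simply coordinatewise, so that no first countability of $M$ is needed --- indeed, Theorem \ref{notfirst} shows there is none.
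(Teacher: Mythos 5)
Your proof is correct and takes essentially the same approach as the paper: a transfinite recursion that builds the convergent sequence level by level, with coordinatewise convergence handling both the limit ordinals and the final passage to $M$ via the basis $B(\alpha,U)$. The only difference is cosmetic and sits at successor stages, where you lift the sequence through $f_\alpha$ by openness and a shrinking-neighbourhood diagonal argument with a prescribed limit in the fiber, while the paper invokes continuity of the decomposition $G_{\alpha+1}$ to get fibers $f_\alpha^{-1}(q_n^\alpha)$ converging to the limit fiber and then selects points --- two equivalent formulations of the same fact.
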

\begin{proof}  We construct the sequence by induction.  Let $\{q^1_n\}_{n=1}^\infty$ be a sequence of points of $X_1$ that converge to the sequential limit $q^1$.  Then for each $n$, $g^2_n = f_1^{-1}(q^1_n)$ and $g^2 = f_1^{-1}(q^1)$ are the elements of $G_2$ the continuous decomposition of $X_2$.  So $\{g^2_n\}_{n=1}^\infty$ is a sequence of pseudo-arcs limiting to the pseudo-arc $g^2 \in G_2$.  Then there is a sequence $\{q^2_n\}_{n=1}^\infty$ with $q^2_n \in g^2_n$ and a point $q^2 \in g^2$ which is the sequential limit of the sequence $\{q^2_n\}_{n=1}^\infty$ in $X_2$.

\

Suppose that $\alpha$ is an ordinal and for each $\gamma < \alpha$ we have constructed a sequence  $\{q^\gamma_n\}_{n=1}^\infty$ and a point $q^\gamma$ with $q^\gamma_n \in X_\gamma, q^\gamma \in X_\gamma$ so that $q^\gamma$ is the sequential limit point of the sequence $\{q^\gamma_n\}_{n=1}^\infty$ and for each $\delta < \gamma$ we have, for each $n$, $f_\delta^\gamma(q^\gamma_n) = q^\delta_n$ and $f_\delta^\gamma(q^\gamma) = q^\delta$.

\

Case 1.  $\alpha$ is a limit ordinal.

\noindent Then let $q^\alpha_n = \varprojlim\{ \{q^\gamma_n \}, f_\delta^\gamma\}_{\delta < \gamma < \alpha}$ and $q^\alpha = \varprojlim\{ \{q^\gamma \}, f_\delta^\gamma\}_{\delta < \gamma < \alpha}$.  It follows easily from properties of inverse limits that the sequence $\{q^\alpha_n\}_{n=1}^\infty$ converges to the sequential limit point $q^\alpha$.

\

Case 2.  $\alpha$ is a successor ordinal.

\noindent
Then by the induction hypothesis $\{q^{\alpha -1}_n\}_{n=1}^\infty$ is a sequence of points of $X_{\alpha-1}$ that converge to the sequential limit $q^{\alpha-1}$.  Then for each $n$, $g^\alpha_n = f_{\alpha-1}^{-1}(q^{\alpha-1}_n)$ and $g^\alpha = f_{\alpha-1}^{-1}(q^{\alpha-1})$ are elements of $G_\alpha$ the continuous decomposition of $X_\alpha$.  So $\{g^\alpha_n\}_{n=1}^\infty$ is a sequence of pseudo-arc limiting to the pseudo-arc $g^\alpha \in G_\alpha$.  Then there is a sequence $\{q^\alpha_n\}_{n=1}^\infty$ with $q^\alpha_n \in g^\alpha_n$ and a point $q^\alpha \in g^\alpha$ which is the sequential limit of the sequence $\{q^\alpha_n\}_{n=1}^\infty$ in $X_\alpha$.

\

Let $p_n = \varprojlim\{ \{q^\gamma_n \}, f_\delta^\gamma\}_{\delta < \gamma < \omega_1}$ and $p=\varprojlim\{ \{q^\gamma \}, f_\delta^\gamma\}_{\delta < \gamma < \omega_1}$.  We claim that $\{p_n\}_{n=1}^\infty$ is a sequence of points with sequential limit $p$: For suppose that $B(\alpha, U)$ is a basic open set containing $p$.  Then $q^\alpha \in U \subset X_\alpha$ so there exists an integer $N$ so that if $n > N$ then $q^\alpha_n \in U$; thus, by definition of the inverse limit, $p_n \in B(\alpha, U)$.  So $p$ is the sequential limit of $\{p_n\}_{n=1}^\infty$, and hence there is a convergent sequence in $M$.
\end{proof}

Therefore, by homogeneity, we have the following.
\begin{theorem}  For each point $p \in M$ there is an infinite sequence $\{p_n\}_{n=1}^\infty$ of points of $M$ with sequential limit $p$.
\end{theorem}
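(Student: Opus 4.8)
The plan is to obtain this as an immediate corollary of the preceding theorem together with the homogeneity of $M$ established in \cite{Sm}; the sentence ``Therefore, by homogeneity'' signals exactly this reduction. The preceding theorem produces one convergent infinite sequence in $M$, so I would fix such a sequence once and for all, say $\{a_n\}_{n=1}^\infty$ with sequential limit $a\in M$. I would also note that the $a_n$ may be taken pairwise distinct: in the construction of that sequence the base sequence $\{q_n^1\}$ in the metric pseudo-arc $X_1$ can be chosen with distinct terms, and since the resulting points $a_n$ project to the distinct points $q_n^1$ under $\pi_1\colon M\to X_1$, they are themselves distinct.

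Given an arbitrary target point $p\in M$, homogeneity of $M$ supplies a homeomorphism $h\colon M\to M$ with $h(a)=p$. Because $h$ is continuous and $a_n\to a$, the image sequence satisfies $h(a_n)\to h(a)=p$. Setting $p_n=h(a_n)$ then gives an infinite sequence of points of $M$ with sequential limit $p$, as required. The terms $p_n$ remain pairwise distinct, hence the sequence is genuinely infinite, by injectivity of $h$ applied to the distinct $a_n$.

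I do not expect a serious obstacle here, as the substantive work was already carried out in the previous theorem. The one point worth emphasizing — precisely because $M$ is nowhere first countable by Theorem \ref{notfirst} — is that preservation of sequential convergence under $h$ uses only continuity of $h$, and continuous maps carry convergent sequences to convergent sequences in \emph{any} topological space, with no first-countability hypothesis needed. Thus the mild subtlety of the non-first-countable setting does not interfere with the argument, and the transport of a single convergent sequence by a homeomorphism furnished by homogeneity completes the proof.
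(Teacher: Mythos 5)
Your proposal is correct and follows exactly the paper's own route: the paper derives this theorem from the preceding one (existence of a single convergent infinite sequence in $M$) with the one-line justification ``by homogeneity,'' which is precisely your transport argument via a homeomorphism $h$ with $h(a)=p$ and the fact that continuity preserves sequential convergence in any topological space. Your additional remarks on pairwise distinctness of the terms and on the irrelevance of first countability are sound refinements of the same argument, not a different approach.
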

\begin{corollary}  There is an infinite sequence $\{p_n\}_{n=1}^\infty$ of points of $M - \{p\}$ with no limit points.
\end{corollary}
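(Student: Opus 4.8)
The plan is to read the corollary off directly from the sequence produced in the preceding theorem, the only work being a short separation argument. By that theorem there is a sequence $\{p_n\}_{n=1}^\infty$ in $M$ with sequential limit $p$; since the construction lifts a nontrivial convergent sequence in $X_1$ (whose terms are distinct from its limit), the points $p_n$ are all different from $p$, so $\{p_n\}_{n=1}^\infty\subseteq M\setminus\{p\}$. It therefore suffices to show that this sequence has no limit point in $M\setminus\{p\}$.

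First I would verify that $p$ is the unique cluster point of $\{p_n\}_{n=1}^\infty$ in $M$. This is the standard Hausdorff fact: $M$ is a Hausdorff continuum, so if some $q\neq p$ were a cluster point one could pick disjoint open sets $U\ni p$ and $W\ni q$; as $p_n\to p$, all but finitely many $p_n$ lie in $U$ and hence outside $W$, contradicting that every neighborhood of $q$ must contain infinitely many terms. Hence the only limit point of the sequence in $M$ is $p$.

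Next I would transfer this to the subspace $\tilde M=M\setminus\{p\}$. For any $q\in\tilde M$ the neighborhoods of $q$ in $\tilde M$ are exactly the sets $U\cap\tilde M$ with $U$ a neighborhood of $q$ in $M$, and since every term $p_n$ already belongs to $\tilde M$, deleting $p$ does not alter how many terms fall into such a set. Thus $q$ is a limit point of $\{p_n\}_{n=1}^\infty$ in $\tilde M$ if and only if it is one in $M$; as the sole candidate $p$ has been removed, the sequence has no limit point in $\tilde M$. The argument is elementary, the only care needed being the uniqueness of cluster points in a Hausdorff space together with this bookkeeping between $M$ and $\tilde M$; I would close by remarking that the corollary is precisely what witnesses the failure of countable compactness of $\tilde M$, distinguishing it from the space $N$ of Theorem \ref{Greim}.
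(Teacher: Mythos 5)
Your proof is correct and is precisely the argument the paper leaves implicit: the corollary is stated without proof immediately after the theorem, the intended reasoning being exactly that a sequence converging to $p$ (with terms distinct from $p$) has $p$ as its unique cluster point in the Hausdorff space $M$, so after deleting $p$ it has no limit points in $M\setminus\{p\}$. Your bookkeeping between $M$ and the subspace $\tilde M$, and the observation that the terms can be taken distinct from $p$ via the construction at the first coordinate, fill in the details faithfully.
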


\begin{figure}[ht]
	\centering
		 \includegraphics[width=0.6\textwidth]{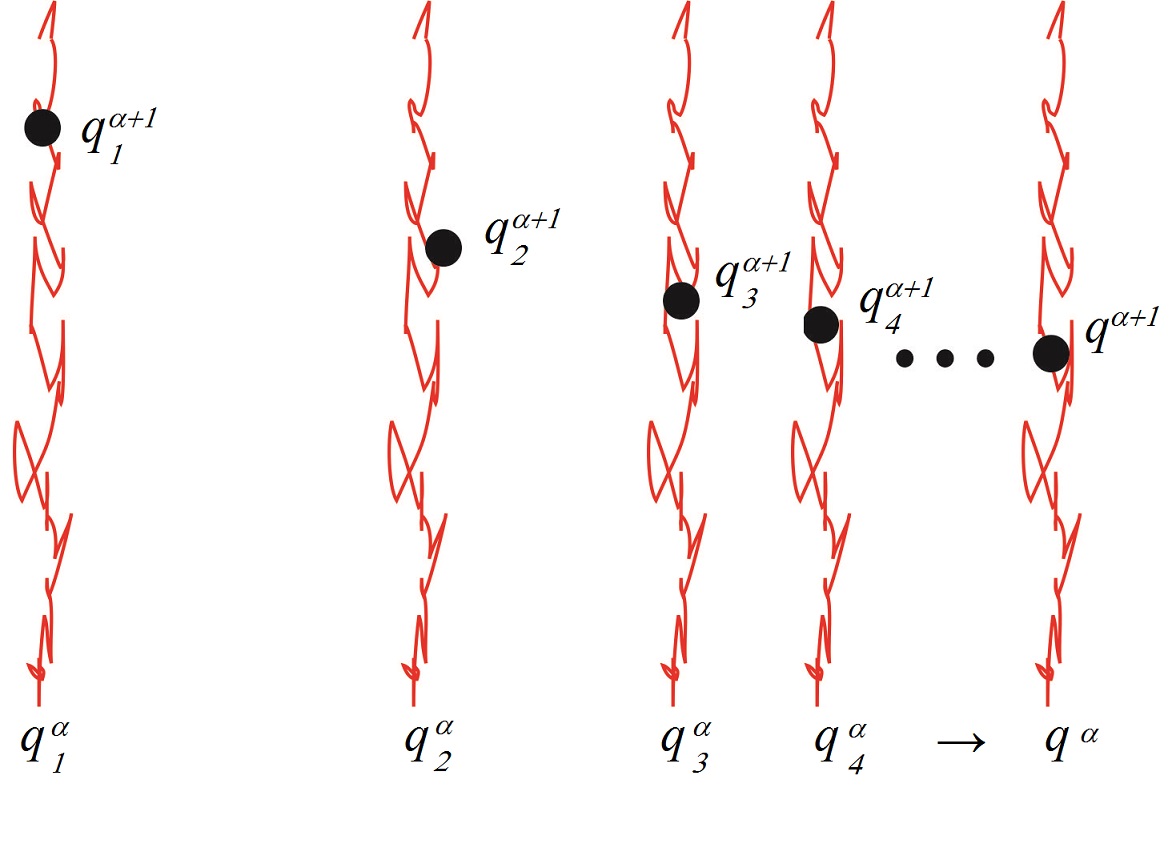}
	\caption{There is a convergent infinite sequence in $M$}
	\label{fig:michelinvlim5}
\end{figure}

\section{The pseudo-circle and the Conjecture of Wood}
We start this section with three background theorems on the pseudo-circle.
\begin{theorem}[Heath \cite{HJ}, Bellamy \cite{Bellamy}]\label{HJ}
Let $C$ be the pseudo-circle. For any integer $n>1$, the pseudo-circle $n$-fold covers itself; i.e. there is an $n$-fold covering map $\phi:C\to C$.
\end{theorem}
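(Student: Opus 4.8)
The plan is to realize $C$ as an inverse limit of circles and to build the $n$-fold cover level by level, so that the covering map appears as an inverse limit of the standard $n$-fold covers of $\mathbb{S}^1$. Write $C=\varprojlim\{\mathbb{S}^1,g_k\}$, where each bonding map $g_k\colon\mathbb{S}^1\to\mathbb{S}^1$ has degree $1$ and is \emph{crooked}; this is the standard inverse-limit presentation of the pseudo-circle, the crookedness of the $g_k$ being exactly what forces hereditary indecomposability. Let $p\colon\mathbb{S}^1\to\mathbb{S}^1$, $p(z)=z^n$, denote the standard $n$-fold covering.

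First I would lift the bonding maps. Since $p$ is a covering map and $\deg(g_k\circ p)=n$ is divisible by $n$, the map $g_k\circ p$ lifts through $p$: there is a continuous $\tilde g_k\colon\mathbb{S}^1\to\mathbb{S}^1$ with $p\circ\tilde g_k=g_k\circ p$ and $\deg(\tilde g_k)=1$. Putting $\tilde C=\varprojlim\{\mathbb{S}^1,\tilde g_k\}$ and $\phi=\varprojlim p\colon\tilde C\to C$ (the squares $p\circ\tilde g_k=g_k\circ p$ commute by construction) yields a candidate map. I would then verify that $\phi$ is an $n$-fold covering: the fiber over $x=(x_k)\in C$ is the inverse limit of the $n$-point sets $p^{-1}(x_k)$ under the restrictions of $\tilde g_k$, which are bijections because the degree-$1$ lift $\tilde g_k$ commutes with the deck transformations of $p$; hence every fiber has exactly $n$ points, and local triviality is inherited from the level maps by a routine inverse-limit argument.

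The substance of the proof is to show that $\tilde C$ is again a pseudo-circle. By construction $\tilde C$ is a circle-like continuum, being an inverse limit of circles under degree-$1$ (hence surjective) bonding maps, so it is a nonempty compact connected space; and it separates the plane, since it again sits in a nested sequence of annuli, namely the connected $n$-fold covering annuli of those whose intersection is $C$. It remains to prove that $\tilde C$ is hereditarily indecomposable, which by the usual inverse-limit criterion reduces to showing that the lifted bonding maps $\tilde g_k$ are crooked. This is where I expect the main difficulty to lie. The key observation is that $p$ is a local homeomorphism, so over each of its $n$ sheets $\tilde g_k$ is a genuine lift of $g_k$ and therefore reproduces the same back-and-forth folding pattern as $g_k$, merely repeated $n$ times around the circle; making this quantitative — transferring the $\epsilon$-crookedness of $g_k$ to $\tilde g_k$ through the covering — is the technical heart of the argument.

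Finally, once $\tilde C$ is known to be a plane-separating, circle-like, hereditarily indecomposable continuum, the topological characterization of the pseudo-circle as the unique such continuum supplies a homeomorphism $\tilde C\cong C$. Composing this homeomorphism with $\phi$ produces an $n$-fold covering map $C\to C$, which is the assertion of the theorem.
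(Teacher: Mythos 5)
Your proposed theorem is one the paper does not prove at all: it is quoted as background and attributed to Heath and Bellamy, so your attempt can only be compared with those cited arguments, not with a proof in the paper. Your overall architecture matches the known route (Bellamy's theorem is precisely that the preimage of a pseudo-circle under $z\mapsto z^n$ is a pseudo-circle): build the candidate cover, show it is circle-like, plane-separating and hereditarily indecomposable, then invoke the uniqueness theorem for the pseudo-circle. The construction of $\phi$ itself is sound, though the clean justification is not a generic ``local triviality passes to inverse limits'' claim (level-wise coverings do not induce coverings of inverse limits in general) but the fact that the lifts $\tilde g_k$ commute with the deck rotation of order $n$ (because $z\mapsto z^n$ is a regular covering), so $\mathbb{Z}/n$ acts freely and compatibly on $\tilde C$ and $\phi$ is the orbit map of a free finite group action on a compact Hausdorff space.

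The genuine gap is exactly the step you flag as the technical heart, and it is not a quantitative chore: crookedness does \emph{not} transfer to the lifts in the way you assert. Writing $G,\tilde G\colon\mathbb{R}\to\mathbb{R}$ for lifts of $g_k,\tilde g_k$ with $G(t+1)=G(t)+1$, the relation $p\circ\tilde g_k=g_k\circ p$ forces $\tilde G(t)=\frac{1}{n}G(nt)+\mathrm{const}$. Crookedness of $g_k$ is a statement about $G$ \emph{modulo} $\mathbb{Z}$ (the folds return $\epsilon$-close in the circle metric); transported upstairs this only says the folds of $\tilde g_k$ return close to the required value modulo $\frac{1}{n}\mathbb{Z}$, i.e.\ close to \emph{some} point of the deck orbit, possibly the wrong sheet. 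So you get crookedness of $\tilde g_k$ for free only at scales below $1/n$. Worse, the implication you need is actually false in the generality you invoke it: a degree-one circle map can be $\epsilon$-crooked, for arbitrarily small $\epsilon$, while its lift $G$ stays within a bounded distance of a rotation, because the crooked returns can always be realized ``cheaply'' using the mod-$1$ identification (dipping $1/2$ below rather than climbing back down a full turn). For such a map, $\tilde G$ stays within $O(1/n)$ of a rotation, so $\tilde g_k$ is \emph{not} crooked at any macroscopic scale. What your route really requires is that the composite bonding maps of the pseudo-circle fold with winding excursions that are unbounded in the universal cover (equivalently, crooked patterns realized with winding numbers divisible by $n$, for every $n$) --- a nontrivial property of the pseudo-circle that is essentially equivalent to the theorem being proved, and is the reason the Heath--Bellamy results require proof. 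A way to close the gap that avoids crookedness entirely: every proper subcontinuum of $C$ is a pseudo-arc, which is tree-like and of trivial shape, so the covering trivializes over it and its preimage is $n$ disjoint homeomorphic copies; this makes every subcontinuum of $\tilde C$ with non-dense image a pseudo-arc or a point, and it remains to show (by an essentiality argument in the annulus, using that proper subcontinua of circle-like continua are chainable, hence lift to the universal cover) that no proper subcontinuum of $\tilde C$ maps onto all of $C$, which also yields indecomposability of $\tilde C$; your final appeal to uniqueness then finishes the proof.
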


\begin{theorem}[Boro\'nski \cite{Bo}]\label{Bo} Suppose that $\phi$ is a degree $r$ pseudo-circle map. Then $\phi$ has at least $|r-1|$ fixed points.
\end{theorem}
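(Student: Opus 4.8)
The plan is to transplant the classical argument that a degree-$r$ self-map of $\mathbb{S}^1$ has at least $|r-1|$ fixed points, with the universal cover $\mathbb{R}\to\mathbb{S}^1$ replaced by an infinite cyclic cover of the pseudo-circle. First I would fix an essential map $w\colon C\to\mathbb{S}^1$ generating $H^1(C;\mathbb{Z})\cong\mathbb{Z}$ and form the pullback of the exponential cover,
\[
\tilde C=\{(x,t)\in C\times\mathbb{R}:w(x)=e^{2\pi i t}\},
\]
with projection $p(x,t)=x$ and deck homeomorphism $T(x,t)=(x,t+1)$, so that $\langle T\rangle\cong\mathbb{Z}$ acts freely with $\tilde C/\langle T\rangle=C$. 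Since $\deg(\phi)=r$ means that $\phi^{*}$ is multiplication by $r$ on $H^{1}(C;\mathbb{Z})$, equivalently that $w\circ\phi$ is homotopic to $r\cdot w$ as maps into $\mathbb{S}^1$, the map $\phi$ admits a lift $\tilde\phi\colon\tilde C\to\tilde C$ with $p\circ\tilde\phi=\phi\circ p$ satisfying the intertwining relation $\tilde\phi\circ T=T^{\,r}\circ\tilde\phi$.

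Writing $s(x,t)=t$ for the second coordinate, I would introduce the \emph{displacement} $\sigma(\tilde x)=s(\tilde\phi(\tilde x))-s(\tilde x)$, the analogue of $h(x)=\tilde f(x)-x$ in the circle case. The intertwining relation yields $\sigma(T\tilde x)=\sigma(\tilde x)+(r-1)$, and a point $x=p(\tilde x)$ is fixed by $\phi$ precisely when $\sigma(\tilde x)\in\mathbb{Z}$. Thus, going once around the cover, the displacement changes by $r-1$, and one expects exactly $|r-1|$ integer values in a fundamental window to be realized, each producing a distinct fixed point of $\phi$; this is where the count $|r-1|$ is meant to originate.

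The hard part is that, unlike $\mathbb{R}$, the space $\tilde C$ is neither ordered nor locally connected, so the intermediate value theorem that produces the $|r-1|$ distinct solutions on the circle is unavailable. I would replace it by a continuum-connectedness argument grounded in an inverse-limit representation $C=\varprojlim\{\mathbb{S}^{1},g_{i}\}$ by crooked degree-one bonding maps: approximate $\phi$ at each level by a degree-$r$ circle map $\phi_{i}\colon\mathbb{S}^{1}\to\mathbb{S}^{1}$ so that $\pi_{i}\circ\phi$ is uniformly close to $\phi_{i}\circ\pi_{i}$, apply the classical count to obtain $|r-1|$ fixed points of each $\phi_{i}$, and extract genuine fixed points of $\phi$ by a compactness and diameter-control argument in the inverse limit. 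As a consistency check, the Lefschetz theorem for compacta gives the algebraic count $\operatorname{tr}(\phi^{*}|_{H^{0}})-\operatorname{tr}(\phi^{*}|_{H^{1}})=1-r$, so at least one fixed point exists whenever $r\neq1$.

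The principal obstacle, which I expect to absorb most of the work, is to guarantee that the $|r-1|$ approximate fixed points found at successive levels can be chosen coherently and remain separated in the limit: the crookedness of the bonding maps $g_{i}$ can push nearby approximate fixed points together, and one must rule out the algebraic count $1-r$ being realized by fewer than $|r-1|$ geometrically distinct points of higher fixed-point index. Controlling these indices---or, in the covering-space formulation, showing that the level sets $\sigma^{-1}(k)$ for distinct integers $k$ project to distinct points of $C$ and cannot merge under the inverse-limit bonding---is the crux of the argument.
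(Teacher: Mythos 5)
First, a point of reference: the paper does not prove this statement at all---it is background quoted from \cite{Bo}, and the only place the relevant technique appears here is the proof of Theorem \ref{notalmost}, which imports from \cite{Bo} the universal cover $\tau:\hat{\mathbb{A}}\to\mathbb{A}$ of an annulus containing $C$, the (nontrivial, cited) connectedness of $\tau^{-1}(C)$, and the stretching relation for lifts. Measured against that toolkit, your proposal has a fatal flaw at its center. The claim that ``$x=p(\tilde x)$ is fixed by $\phi$ precisely when $\sigma(\tilde x)\in\mathbb{Z}$'' is false: in your pullback cover, $\sigma(\tilde x)\in\mathbb{Z}$ says only that $w(\phi(x))=w(x)$, i.e.\ that $x$ and $\phi(x)$ lie in the same fiber of the reference map $w:C\to\mathbb{S}^1$. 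No continuous surjection from the pseudo-circle onto the circle is injective (otherwise it would be a homeomorphism onto $\mathbb{S}^1$), so integrality of your displacement certifies a coincidence point of $w\circ\phi$ and $w$, not a fixed point of $\phi$. The circle argument you are transplanting works only because there one may take $w=\mathrm{id}$. The same obstruction appears in the ambient cover used in \cite{Bo} and in Section 6 of this paper: a fixed point corresponds to the \emph{two-dimensional} displacement $\hat\phi(\hat x)-\hat x$ lying in $\{0\}\times 2\pi\mathbb{Z}$ (zero radial displacement \emph{and} angular displacement in $2\pi\mathbb{Z}$ simultaneously), which a scalar intermediate-value count can never produce. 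Your bookkeeping thus yields $|r-1|$ coincidence points, which is genuinely weaker than the theorem; this gap is exactly why the result required a separate paper.

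You partly sense this, since you abandon the IVT in favor of an inverse-limit approximation, but there the argument is openly incomplete: producing $|r-1|$ fixed points of each approximating degree-$r$ circle map is trivial, and the entire content of the theorem is the step you defer---showing that the corresponding Nielsen-type classes of $\phi$ itself are each nonempty and stay geometrically distinct under the crooked bonding maps. Two further points would need repair even in your own setup: (i) connectedness of your cover $\tilde C$, which you need both for the intertwining constant to equal $r$ and for any connectedness argument, is not free---the analogous statement for $\tau^{-1}(C)$ is itself a theorem proved in \cite{Bo}; and (ii) your Lefschetz ``consistency check'' is not an available tool, since the Lefschetz fixed point theorem fails for general continua (Kinoshita's contractible continuum admits a fixed-point-free map), and for the pseudo-circle the implication ``$L(\phi)=1-r\neq 0$ implies a fixed point'' is essentially a weak form of the very theorem being proved.
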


\begin{theorem}[Kennedy\&Rogers \cite{KeRo}]\label{KeRo} The pseudocircle has uncountably many orbits under the action of its homeomorphism group. Each of these orbits is the union of uncountably many composants.
\end{theorem}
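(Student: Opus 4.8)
The plan is to organize the orbits through the composant structure of $C$ and to separate them by an invariant extracted from the planar (prime--end) geometry of $C$. Since $C$ is hereditarily indecomposable it is indecomposable, so its composants $\kappa(x)=\bigcup\{K:K\subsetneq C\text{ a subcontinuum},\ x\in K\}$ partition $C$ into $\mathfrak{c}$ pairwise disjoint, dense, first--category sets. Every $h\in H:=\mathrm{Homeo}(C)$ sends proper subcontinua to proper subcontinua, so $h(\kappa(x))=\kappa(h(x))$ and $H$ permutes the set $\mathcal{K}$ of composants. My reduction is to produce a ``type'' function $\tau$ on $C$ with four properties: (I) $\tau\circ h=\tau$ for all $h\in H$ (invariance); (II) $\tau$ is constant on each composant; (III) $\tau$ is \emph{complete}, i.e. $\tau(x)=\tau(y)$ forces some $h\in H$ with $h(x)=y$; and (IV) $\tau$ takes uncountably many values, each realized on uncountably many composants. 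Granting these, (I)+(III) give that each orbit equals a fibre $\tau^{-1}(t)$; (II) makes that fibre a union of composants; (IV) makes it a union of \emph{uncountably many} composants and provides uncountably many distinct fibres. This yields both assertions at once. (For the weaker clause ``uncountably many orbits'' alone, (I) together with uncountably many values already suffices, since orbits refine fibres.)

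To build $\tau$ I would pass to the plane. The pseudo--circle separates $\mathbb{S}^2$ into a bounded domain $U$ and an unbounded domain $V$, each simply connected, so Carath\'eodory prime--end theory attaches to each a prime--end circle $\mathbb{S}^1\cong P(U),P(V)$ with principal--point and accessibility data on $C$. The rigidity input I would establish is that every $h\in H$ extends to a homeomorphism of $\mathbb{S}^2$ that either preserves or interchanges $\{U,V\}$ and hence induces homeomorphisms of the prime--end circles; the fixed--point and degree constraints for self--maps of $C$ (Theorem~\ref{Bo}) together with the self--covering structure (Theorem~\ref{HJ}) can be used to pin down the admissible ``rotational'' behaviour of these induced circle maps. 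I would then define $\tau(x)$ to be the combinatorial/order--theoretic pattern recording how the nested crooked chains through $x$ sit among the accessible prime ends of $U$ and of $V$; this is designed to be constant along proper subcontinua, hence along composants, giving (II), and $H$--invariant by the extension step, giving (I). Almost homogeneity of $C$ (noted in the introduction) guarantees that every orbit, and every fibre of $\tau$, is dense, which is what feeds the uncountability counts in (IV).

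The main obstacle is precisely the construction and verification of $\tau$, and within that, property (III). Invariance (I) rests on the extension--to--the--plane step and on the rigidity of the induced maps of $P(U),P(V)$; the uncountable value and fibre counts in (IV) come from the richness of the crooked--chain patterns an embedded hereditarily indecomposable circle--like continuum must exhibit. Completeness (III) is the genuinely hard half: given two points with the same prime--end pattern one must manufacture an honest self--homeomorphism of $C$ carrying one to the other, and this requires a Bing--style back--and--forth matching of progressively finer crooked circular chains through the two points, respecting the accessibility data. Essentially all the difficulty of the theorem is concentrated in this chaining analysis; once it is carried out, the logical payoff described above delivers uncountably many orbits, each a union of uncountably many composants.
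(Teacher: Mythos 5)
First, a point of comparison: the paper does not prove this statement at all. Theorem \ref{KeRo} is quoted as background, with attribution, from Kennedy and Rogers \cite{KeRo}, so there is no internal proof to measure your attempt against; your proposal has to stand on its own as a proof of the Kennedy--Rogers theorem. As such it does not stand: it is a plan in which every load-bearing step is deferred. The invariant $\tau$ is never defined --- ``the combinatorial/order-theoretic pattern recording how the nested crooked chains through $x$ sit among the accessible prime ends'' describes what you hope to have, not a construction --- and properties (I)--(IV) are essentially the content of the theorem repackaged as hypotheses. Moreover, two steps you mark as ``to be established'' are themselves serious unproved claims. The extension step (that every $h\in\mathrm{Homeo}(C)$ extends to a homeomorphism of $\mathbb{S}^2$) cannot be taken for granted: extendability of self-homeomorphisms of hereditarily indecomposable planar continua is a genuinely delicate matter (for the pseudo-arc, for instance, self-homeomorphisms need not extend; see the discussion in \cite{Lewis}), and prime-end and accessibility data are properties of an embedding, not of $C$ itself, so even the $H$-invariance of $\tau$ in (I) would further require knowing that the relevant prime-end structure does not depend on the choice of embedding. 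Neither issue is addressed.

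Second, the completeness property (III), which you correctly identify as carrying ``essentially all the difficulty,'' is not merely hard --- it is the point where a naive Bing-style back-and-forth must fail. Unconstrained crooked-chain matching is exactly the argument that proves the pseudo-arc homogeneous; since the pseudo-circle is \emph{not} homogeneous, any back-and-forth for it has to be obstructed by, and conducted relative to, the very invariant $\tau$ you have not constructed, while still being strong enough to carry any point to any other point of its composant (this is what ``each orbit is a union of composants'' requires). Conceding that this ``chaining analysis'' remains to be carried out concedes the theorem rather than proving it. Finally, your route to (IV) is a non sequitur: density of orbits (from almost homogeneity) gives no lower bound on the number of composants per orbit, because every single composant of an indecomposable continuum is already dense. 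In short, your skeleton is broadly consistent in spirit with the prime-end approach of Kennedy and Rogers \cite{KeRo}, but none of its three pillars --- the invariant, its topological invariance, and the realization of equivalences by actual homeomorphisms --- is built, so there is no proof here to compare with anything.
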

The following lemma will be helpful, as a direct consequence of the three background theorems on the pseudo-circle.

\begin{lemma}\label{lem}
Let $C$ be the pseudo-circle. For any integer $n>1$, there is a dense set $D\subseteq C$ such that for any $d\in D$ there is an $n$-fold covering map $\phi$ such that $\phi(d)=d$.
\end{lemma}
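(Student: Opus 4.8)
The plan is to manufacture one $n$-fold covering map that has a fixed point, and then to slide that fixed point over a dense subset of $C$ by conjugating with homeomorphisms. All three background results enter: Theorem \ref{HJ} supplies the cover, Theorem \ref{Bo} supplies the fixed point, and Theorem \ref{KeRo} supplies the density.

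First I would apply Theorem \ref{HJ} to fix an $n$-fold covering map $\phi: C \to C$. Such a map induces multiplication by $n$ on $\check H^1(C) \cong \mathbb{Z}$, so its degree $r$ satisfies $|r| = n > 1$, and in particular $r \neq 1$. Theorem \ref{Bo} then guarantees that $\phi$ has at least $|r-1| \geq 1$ fixed points, so I may choose a point $d_0 \in C$ with $\phi(d_0) = d_0$. This single fixed point seeds the whole argument. Next I would propagate it along its homeomorphism orbit: for any $h \in \mathrm{Homeo}(C)$, the conjugate $\phi_h := h \circ \phi \circ h^{-1}$ is again an $n$-fold covering map, since conjugation by a homeomorphism carries covering maps to covering maps of the same multiplicity, and it fixes $h(d_0)$ because $\phi_h(h(d_0)) = h(\phi(d_0)) = h(d_0)$. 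Thus every point of the orbit $D := \{\, h(d_0) : h \in \mathrm{Homeo}(C)\,\}$ is the fixed point of an $n$-fold self-cover of $C$, which is exactly the property demanded of $D$.

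Finally I would verify that $D$ is dense. Here Theorem \ref{KeRo} is decisive: the orbit $D$ is a union of composants of $C$. Since the pseudo-circle is hereditarily indecomposable, it is in particular indecomposable, and in an indecomposable continuum every composant is dense; a union of dense sets is dense, so $D$ is dense in $C$. I expect the only step needing a word of care to be the identification of an $n$-fold covering map as a degree-$n$ (hence degree $\neq 1$) map, since it is precisely this point that makes Theorem \ref{Bo} applicable and thereby produces the initial fixed point $d_0$; once that link is in place, the remaining steps — conjugation preserving the covering structure, and density of composants — are routine consequences of the three cited theorems and of indecomposability.
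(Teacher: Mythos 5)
Your proposal is correct and takes essentially the same route as the paper's own proof: fix one $n$-fold cover from Theorem \ref{HJ}, get a fixed point from Theorem \ref{Bo}, and conjugate by homeomorphisms so the fixed point ranges over a union of composants, which is dense by Theorem \ref{KeRo} and indecomposability. Your added remark that an $n$-fold cover has degree of absolute value $n\neq 1$ (so Theorem \ref{Bo} indeed yields a fixed point) is a small clarification the paper leaves implicit, but the argument is the same.
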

\begin{proof}
Fix an integer $n>1$. By Theorem \ref{HJ} there is an $n$-fold covering map $\phi':C\to C$. By Theorem \ref{Bo} there is a point $c\in C$ such that $\phi'(c)=c$. By Theorem \ref{KeRo} there is an uncountable collection of composants such that for any point $d$ of any such composant there is a homeomorphism $h_d:C\to C$ such that $h_d(d)=c$. Since any composant is dense in $C$, and a composition of a homeomorphism with an $n$-fold covering map is again an $n$-fold covering map it is enough to set $\phi=h^{-1}_d\circ \phi'\circ h_d$ and notice that $\phi(d)=d$.
\end{proof}
Now we shall prove Theorem \ref{notalmost}. The idea of the proof is as follows. For every integer $n>0$ the pseudo-circle admits an $n$-fold covering map, which extends to a degree $n$ map on an annular neighborhood (which we do not need to use). An $n$-fold covering map composed with a homeomorphism is another $n$-fold covering map, but for $n_1\neq n_2$ no two $n_1$-fold and $n_2$-fold covering maps can be close to each other. So if one measures the distance from a point to its image (as a function from $C$ to the real line) for two such maps, these can never be close.
\begin{proof}(of Theorem \ref{notalmost})
Let $C$ be a pseudo-circle embedded in an annulus $\mathbb{A}$ in such a way so that it separates the two components of the boundary of $\mathbb{A}$. Let $\tau:\hat{\mathbb{A}}\to \mathbb{A}$ be the universal cover of $\mathbb{A}$, where we can assume that $\mathbb{A}=\{(r,\theta)\in \mathbb{R}^2:1\leq r\leq 2,0\leq\theta<2\pi\}$ in polar coordinates, $\hat{\mathbb{A}}=\{(r,\theta)\in\mathbb{R}^2:1\leq r\leq 2\}$, and $\tau(r,\theta)=(r, \theta(\operatorname{mod} 2\pi))$. It will be convenient to use the following metric $d((r,\theta),(r',\theta'))=|r-r'|+|\theta-\theta'|$ for both $\mathbb{A}$ and $\hat{\mathbb{A}}$. To avoid confusion we will indicate in which of the two spaces the distance is taken by writing $d_{\mathbb{A}}$ or $d_{\hat{\mathbb{A}}}$. Note that $\tau$ is a local isometry with respect to the two metrics; i.e. $$d_{\hat{\mathbb{A}}}(\hat x,\hat y)<\epsilon\textrm{ if and only if }d_{\mathbb{A}}(\tau(\hat x),\tau(\hat y))<\epsilon \textrm{ for any } \epsilon<2\pi.$$
Note that, if $k:C\to C$ is a map then it can be continuously extended to a map $\bar k$ of the whole $\mathbb{A}$, as $C$ is a closed subset of the ANR $\mathbb{A}$. Consequently, there exists a lift $\hat k:\tau^{-1}(C)\to \tau^{-1}(C)$ of $k$, namely the restriction of a lift $\bar k:\hat{\mathbb{A}}\to \hat{\mathbb{A}}$ of $H$ to $\tau^{-1}(C)$ i.e. $\tau\circ\hat k=k\circ \tau$. It is known that $\tau^{-1}(C)$ is connected (see \cite{Bo}).

Let $\phi:C\to C$ be a 4-fold covering map, and let $\psi=\phi^2$. Note that $\psi$ is a 16-fold covering map and there exists a point $c\in C$ such that $\phi(c)=\psi(c)=c$ by Theorem 1.1 \cite{Bo}. Let $p=c$. Let $h:C\to C$ be any homeomorphism of $C$ with $h(p)=p=c$. Set $\eta=\psi\circ h$ and note that $\eta$ is a 16-fold covering map and $\eta(c)=c$. Let $f,g:C\to \mathbb{R}$ be defined as follows. Set $f(x)=d_\mathbb{A}(x,\phi(x))$ and $g(x)=d_\mathbb{A}(x,\eta(h(x)))$. Since $f(p)=g(p)=0$ without loss of generality we may assume that $f(C)=g(C)=[0,1]$ (otherwise it suffices to normalize the co-domains).

Fix $\hat c\in\tau^{-1}(c)$. There exist unique lifts  $\hat \phi,\hat \eta:\tau^{-1}(C)\to\tau^{-1}(C)$ such that $\hat \phi(\hat c)=\hat \eta(\hat c)=\hat c$. By the fact that $\phi$ extends to a degree $4$ map and $\eta$ extends to a degree $16$ map on $\mathbb{A}$ these maps induce multiplication by $4$ and $16$ respectively on the level of the fundamental group $\mathbb{Z}$ of $\mathbb{A}$, and hence we have the following stretching in $\hat{\mathbb{A}}$
$$\hat \phi(\hat c+(0,2\pi))=\hat \phi(\hat c)+(0,4\pi),$$
$$\hat \eta(\hat c+(0,2\pi))=\hat \eta(\hat c)+(0,16\pi).$$
Let $\hat f,\hat g:\tau^{-1}(C)\to \mathbb{R}$ be defined by $\hat f(\hat x)=d_{\hat{\mathbb{A}}}(\hat x,\hat \phi(\hat x))$ and $g(\hat x)=d_{\hat{\mathbb{A}}}(\hat x,\hat \eta(\hat x))$. Because $d(\hat f(\hat c),\hat g(\hat c))=0$ and $d(\hat f(\hat c+(0,2\pi)),\hat g(\hat c+(0,2\pi)))=16\pi-4\pi=12\pi$ there exists $\hat w\in\tau^{-1}(C)$ such that $d(\hat f(\hat w),\hat g(\hat w))=\pi$. Since $\tau$ is an isometry on every set of diameter less than $2\pi$, we get $d(f(w),g(w))=\pi$, where $w=\tau(\hat w)$. Hence $d(f,g)\geq \pi$ and the proof is complete in the case $p=c$ with any $\epsilon<\pi$.

The proof is finished by Corollary \ref{lem}, application of which leads to a dense set of points $D$ with the required property.
\end{proof}
\begin{corollary}
There exists a locally compact metric space $L$ such that its one-point compactification is a pseudo-circle and the space $C_0(L,\mathbb{C})$ is not almost transitive.
\end{corollary}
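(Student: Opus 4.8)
The plan is to read off the corollary directly from Theorem \ref{notalmost}, since essentially all of the work has already been carried out there; what remains is a short piece of point-set topology. First I would recall that the pseudo-circle $C$, being a nondegenerate plane continuum, is a compact metric space. Fixing any point $p$ in the dense set $D$ furnished by Theorem \ref{notalmost}, I would put $L=C\setminus\{p\}$. As an open subspace of a compact Hausdorff space, $L$ is locally compact, and it is metrizable as a subspace of the metric space $C$; this settles the first two adjectives demanded of $L$.

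The one substantive step is to identify the one-point compactification $\alpha L$ with $C$. Here I would invoke the standard fact that for a compact Hausdorff space $X$ and a point $p\in X$, the one-point compactification of $X\setminus\{p\}$ is homeomorphic to $X$, with the adjoined point $\infty$ taking the place of $p$. Concretely, the map that sends $\infty\mapsto p$ and is the identity on $L$ is a continuous bijection of $\alpha L$ onto $C$; since $\alpha L$ is compact and $C$ is Hausdorff, this bijection is a homeomorphism. The only point worth checking is that $p$ is not isolated in $C$, so that the basic neighborhoods of $\infty$ (the complements in $L$ of compact sets) match up exactly with the punctured neighborhoods of $p$ in $C$; this holds automatically because $C$ is a continuum with more than one point.

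Finally, Theorem \ref{notalmost} asserts precisely that for this choice of $p\in D$ the space $C_0(L,\mathbb{C})$ is not almost transitive. Combining the three observations yields a locally compact metric space $L$ whose one-point compactification is the pseudo-circle and for which $C_0(L,\mathbb{C})$ fails to be almost transitive, as required. I do not expect a genuine obstacle in this argument: the whole content of the statement is already encapsulated in Theorem \ref{notalmost}, and the identification $\alpha L\cong C$ is the only place requiring an explicit verification, which is entirely routine.
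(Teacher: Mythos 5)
Your proposal is correct and matches the paper's (implicit) argument: the corollary is stated as an immediate consequence of Theorem \ref{notalmost}, obtained exactly as you do by taking $L=C\setminus\{p\}$ for $p\in D$ and identifying the one-point compactification of $L$ with $C$. Your explicit verification of $\alpha L\cong C$ merely fills in the routine point-set details the paper leaves unstated.
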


\section{Projective homogeneity and the work of Irwin and Solecki}
In this section we prove Theorems \ref{nosurhom} to \ref{generic}, therefore showing that the work of Irwin and Solecki \cite{IS} does not extend to the pseudo-circle.
\begin{proof}(proof of Theorem \ref{nosurhom})
As in the proof of Theorem \ref{notalmost}, consider $C$ as a subset of $\mathbb{A}$. Let $F:C\to C$ be a 2-fold self-covering map, and $G:C\to C$ a 3-fold self-covering map of $C$. Let $\pi_1:C\to \mathbb{S}^1$ be the radial projection onto a boundary component of $\mathbb{A}$. Set $f=\pi_1\circ F$ and $g=\pi_1\circ G$. Note that since both $C$ and $\mathbb{S}^1$ have the first \v Cech cohomology group isomorphic to $\mathbb{Z}$, there is a well defined degree for $f$ and $g$ equal to $2$ and $3$ respectively. Now let $h:C\to C$ be any map. Then $h$ also has a well defined degree. Since $deg(g\circ h)=deg(g)deg(h)$ we must have $deg(g\circ h)\neq deg(f)$. Now the proof is concluded by proving that no two maps with distinct degrees can be arbitrarily close. Since this is done by a straightforward adaptation of the proof of Theorem \ref{notalmost}, we leave it to the reader.
\end{proof}
\begin{proof}(proof of Theorem \ref{degree})
Let $f:C\to K$ be a surjection with $deg(f)=1$, guaranteed by \cite{Fe}, p.510, and $\phi_n$ be an $n$-fold self-covering of the pseudo-circle. Then $f_n=f\circ\phi_n$ is the desired map from $C$ to $K$ with $deg(f_n)=n$.
\end{proof}

\begin{proof}(proof of Theorem \ref{generic})
Let $f:C\to K$ be a map of degree $n$ and let $g:C\to K$ with $deg(g)=k$ be such that $n$ and $k$ are coprime. The proof is concluded by the arguments in the proof of Theorem \ref{nosurhom}.
\end{proof}
\section{Acknowledgments}
The authors are grateful to K. Kawamura for the many valuable comments and suggestions on the work related to Wood's Conjecture. The authors also express many thanks to E. Glassner, W. Kubi\'s, and D. Barto\v sov\'a for some helpful discussions on Fra\"iss\'e limits. Finally, the authors express gratitude to two anonymous referees for very informative reports with regard to Banach spaces, that greatly helped to improve the paper. 

Early part of this research, that concluded in achieving the results in Section 6, was supported by NPU II project LQ1602 IT4Innovations excellence
in science, with first author's visit to the Department of Mathematics and Statistics, Auburn University in March of 2016. The author is grateful for the hospitality of the department. J. Boro\'nski's subsequent work was supported by National Science Centre, Poland (NCN), grant no. 2015/19/D/ST1/01184 for the project \textit{Homogeneity and Minimality in Compact Spaces}. 

\end{document}